\newtheorem{thm}{Theorem}[section]
\newtheorem{cor}[thm]{Corollary}
\newtheorem{lem}[thm]{Lemma}
\newtheorem{prop}[thm]{Proposition}
\newtheorem{conj}[thm]{Conjecture}
\newtheorem{obs}[thm]{Observation}
\newtheorem{defn}[thm]{Definition}
\numberwithin{figure}{section}  
\numberwithin{table}{section}
\definecolor{red}{rgb}{1,0,0}
\def\red{\color{red!70}}
\newcommand{\R}{\mathbb{R}}
\newcommand{\spec}{\operatorname{spec}}
\newcommand{\trace}{\operatorname{trace}}
\newcommand{\sgn}{\operatorname{sgn}}
\newcommand{\bit}{\begin{itemize}}
\newcommand{\eit}{\end{itemize}}
\newcommand{\ben}{\begin{enumerate}}
\newcommand{\een}{\end{enumerate}}
\newcommand{\beq}{\begin{equation}}
\newcommand{\eeq}{\end{equation}}
\newcommand{\bea}{\begin{eqnarray*}} 
\newcommand{\eea}{\end{eqnarray*}}
\newcommand{\bpf}{\begin{proof}}
\newcommand{\epf}{\end{proof}\ms}
\newcommand{\bmt}{\begin{bmatrix}}
\newcommand{\emt}{\end{bmatrix}}
\newcommand{\ms}{\medskip}
\newcommand{\CP}{\mathcal{CP}}
\newcommand{\Cy}{\mathcal{C}}
\newcommand{\D}{\mathcal D}
\newcommand{\cL}{\mathcal L}
\newcommand{\DL}{\D^{L}}
\newcommand{\DQ}{\D^{Q}}
\newcommand{\NDL}{\D^{\cL}}
\newcommand{\tr}{\operatorname{t}}
\newcommand{\tm}{t_{\text{min}}}
\newcommand{\tM}{t_{\text{max}}}
\newcommand{\diag}{\operatorname{diag}}
\title{The normalized distance Laplacian}
\author{Carolyn Reinhart\thanks{Department of Mathematics, Iowa State University, Ames, IA 50011, USA (reinh196@iastate.edu) }}
\begin{document}
\maketitle
\begin{abstract}
The distance matrix $\mathcal{D}(G)$ of a graph $G$ is the matrix containing the pairwise distances between vertices. The transmission of a vertex $v_i$ in $G$ is the sum of the distances from $v_i$ to all other vertices and $T(G)$ is the diagonal matrix of transmissions of the vertices of the graph. The normalized distance Laplacian, $\mathcal{D}^{\mathcal {L}}(G)=I-T(G)^{-1/2}\mathcal{D}(G) T(G)^{-1/2}$, is introduced. This is analogous to the normalized Laplacian matrix, $\mathcal{L}(G)=I-D(G)^{-1/2}A(G)D(G)^{-1/2}$, where $D(G)$ is the diagonal matrix of degrees of the vertices of the graph and $A(G)$ is the adjacency matrix. Bounds on the spectral radius of $\mathcal{D}^{\mathcal {L}}$ and connections with the normalized Laplacian matrix are presented. Twin vertices are used to determine eigenvalues of the normalized distance Laplacian. The distance generalized characteristic polynomial is defined and its properties established. Finally, $\mathcal{D}^{\mathcal {L}}$-cospectrality and lack thereof is determined for all graphs on 10 and fewer vertices, providing evidence that the normalized distance Laplacian has fewer cospectral pairs than other matrices.
\end{abstract}

\section{Introduction}

Spectral graph theory is the study of matrices defined in terms of a graph, specifically relating the eigenvalues of the matrix to properties of the graph. Many such matrices are studied and they can often be used in applications. The normalized Laplacian, a matrix popularized by Fan Chung in her book Spectral Graph Theory, has applications in random walks \cite{Chu97}. The distance matrix was defined by Graham and Pollak in \cite{GP71} in order to study the problem of loop switching in routing telephone calls through a network. In this paper, we introduce the normalized distance Laplacian, which is defined analogously to the normalized Laplacian but incorporates distances between each pair of vertices in the graph.

A {\em weighted graph} is a graph with vertices $V$ and a weight function $w$ that assigns a positive real number to each edge in the graph. The {\em degree} of a vertex in a weighted graph is the sum of the weights of the edges incident to it. Any unweighted graph $G$ may be seen as a weighted graph with edge weights equal to 1.

The {\em adjacency matrix} of a weighted graph $G$ is the real symmetric matrix defined by $(A(G))_{ij}= w(v_i,v_j)$.
The eigenvalues of $A(G)$ are called the {\em adjacency eigenvalues} and are denoted $\lambda_1\geq \dots\geq \lambda_n$ (note that while adjacency eigenvalues are ordered largest to smallest, Laplacian eigenvalues will be ordered smallest to largest).
The {\em degree matrix} is the diagonal matrix $D(G)=\diag(\deg(v_1),\dots,\deg(v_n))$.
The {\em combinatorial Laplacian matrix} of a weighted graph $G$, denoted $L(G)$, has entries \[(L(G))_{ij}= \begin{cases}
 -w(v_i,v_j) &  i\not=j \\
 \deg(v_i) &  i=j
 \end{cases}\]
and it is easy to observe that $L(G)=D(G)-A(G)$. The eigenvalues of $L(G)$ are called the {\em combinatorial Laplacian eigenvalues} and are denoted $\phi_1\leq \dots\leq \phi_n$. Since $L(G)$ is a positive semi-definite matrix, $\rho_{L(G)}=\phi_n$. The matrix $Q(G)=D(G)+A(G)$ is called the {\em signless Laplacian}. The eigenvalues of $Q(G)$ are called the {\em signless Laplacian eigenvalues} and are denoted $q_1\leq \dots\leq q_n$.
The {\em normalized Laplacian matrix} of a weighted graph $G$ without isolated vertices, denoted $\cL(G)$, has entries \[(\cL(G))_{ij}= \begin{cases}
 -\frac{w(v_i,v_j)}{\sqrt{\deg(v_i)\deg(v_j)}} &  i\not=j \\
 1 &  i=j\\
 \end{cases}.\]
 Observe that $\cL(G)=D(G)^{-1/2}L(G)D(G)^{-1/2}=I-D(G)^{-1/2}A(G)D(G)^{-1/2}$. The eigenvalues of $\cL(G)$ are called the {\em normalized Laplacian eigenvalues} and are denoted $\mu_1\leq \dots\leq \mu_n$. Since $\cL(G)$ is a positive semi-definite matrix, $\rho_{\cL(G)}=\mu_n$.
 
 The four matrices are also denoted just $A,L,Q,$ and $\cL$ when the intended graph is clear. Note that while all these matrices are defined for graphs in general, in this paper we consider them for connected graphs only, unless otherwise stated.


The {\em distance matrix}, denoted $\D(G)$, has entries $(\D(G))_{ij}=d(v_i,v_j)$ where $d(v_i,v_j)$ is the {\em distance} (number of edges in a shortest path) between $v_i$ and $v_j$. Much work has been done to study the spectra of distance matrices; for a survey see $\cite{AH14}$. Requiring that every graph $G$ be connected ensures that $d(v_i,v_j)$ is finite for every pair of vertices $v_i,v_j\in V(G)$. The eigenvalues of $\D(G)$ are called {\em distance eigenvalues} and are denoted $\partial_1\geq \dots\geq \partial_n$ (note that while distance eigenvalues are ordered largest to smallest, distance Laplacian eigenvalues will be ordered smallest to largest).
In a graph $G$, the {\em transmission} of a vertex $v\in V(G)$, denoted $\tr_G(v)$ or $\tr(v)$ when the intended graph is clear, is defined as $\tr_G(v)=\sum_{u_i\in V(G)} d(v,u_i)$. A graph is {\em $k$-transmission regular} if $\tr(v)=k$ for all $v\in V$. The {\em transmission matrix} is the diagonal matrix $T(G)= \diag(\tr(v_1),\dots,\tr(v_n))$.

In \cite{AH13}, Aouchiche and Hansen defined the distance Laplacian and the signless distance Laplacian. The {\em distance Laplacian matrix}, denoted $\DL(G)$, has entries \[(\DL(G))_{ij}= \begin{cases}
 -d(v_i,v_j) &  i \neq j \\
 \tr(v_i) &  i=j
 \end{cases}\]
and $\DL(G)=T(G)-\D(G)$. The eigenvalues of $\DL(G)$ are called {\em distance Laplacian eigenvalues} and are denoted $\partial^{L}_1\leq \dots\leq \partial^{L}_n$. Since $\DL(G)$ is a positive semi-definite matrix, $\rho_{\DL(G)}=\partial^{L}_n$. The matrix $\DQ(G)=T(G)+\D(G)$ is called the {\em signless distance Laplacian}. The eigenvalues of $\DQ(G)$ are called the {\em signless distance Laplacian eigenvalues} and are denoted $\partial^{Q}_1\leq \dots\leq \partial^{Q}_n$. The matrices are also denoted as just $\D,\DL,$ and $\DQ$ when the intended graph is clear.

In Section \ref{sec:NormDistLap}, we define the normalized distance Laplacian and show that its spectral radius is strictly less than 2, in contrast with the normalized Laplacian whose spectral radius is equal to 2 when the graph is bipartite. We also find bounds on the normalized distance Laplacian eigenvalues and provide data that leads to conjectures about the graphs achieving the maximum and minimum spectral radius. Methods using twin vertices to determine eigenvalues for the normalized distance Laplacian are described in Section \ref{sec:EigenDet} and applied to determine the spectrum of several families of graphs. In Section \ref{sec:GenDistChar}, we define the generalized distance generalized characteristic polynomial. We show that if the polynomial is equal for two non-isomorphic graphs, they have the same $\D$, $\DL$, $\DQ$, and $\NDL$ spectra and the same multiset of transmissions, extending concepts from the generalized characteristic polynomial.

Two non-isomorphic graphs $G$ and $H$ are {\em $M$-cospectral} if $\spec(M(G))=\spec(M(H))$; if $G$ and $H$ are $M$-cospectral we call them {\em M-cospectral mates} (or just cospectral mates if the choice of $M$ is clear). A graph parameter is said to be {\em preserved by $M$-cospectrality} if two graphs that are $M$-cospectral must share the same value for that parameter (can be numeric or true/false). Cospectral graphs and the preservation of parameters has been studied for many different matrices. Godsil and McKay were the first to produce an adjacency cospectrality construction \cite{GM82} but many other papers study cospectrality of the normalized Laplacian (see, for example, 
\cite{But15},\cite{BG11},\cite{BH16},\cite{O13}). Several of these papers also discuss preservation by $M$-cospectrality; for a table summarizing preservation by $A,L,Q$ and $\cL$ cospectrality of some well known graph parameters, see \cite{BC}. 

Cospectrality of $\D,\DL,$ and $\DQ$ was studied by Aouchiche and Hansen in \cite{AH18} and cospectral constructions have been found for the distance matrix in \cite{H17}. Cospectral constructions for the distance Laplacian matrix were exhibited in \cite{BDHLRSY19} and several graph parameters were shown to not be preserved by $\DL$-cospectrality. In Section \ref{sec:Cospec}, we find all cospectral graphs on 10 or fewer vertices for the normalized distance Laplacian and show how some of the graph pairs could be constructed using $\DL$-cospectrality constructions. We also use examples of graphs on 9 and 10 vertices to show several parameters are not preserved by normalized distance Laplacian cospectrality and provide evidence that cospectral mates are rare for this matrix. Since graphs with different spectra cannot possibly be isomorphic, the spectrum of graphs with respect to matrices can be thought of as a tool to differentiate between graphs. Because of this, the rarity of cospectrality is beneficial. 

Throughout the paper, we use the following standard definitions and notation. A {\em graph} $G$ is a pair $G=(V,E)$, where $V=\{v_1,\dots,v_n\}$ is the set of {\em vertices} and $E$ is the set of edges. An {\em edge} is a two element subset of vertices $\{v_i,v_j\}$, also denoted as just $v_iv_j$. We use $n=|V|$ to denote the {\em order} of $G$ and assume all graphs $G$ are connected and simple (i.e. no loops or multiedges). Two vertices $v_i$ and $v_j$ are {\em neighbors} if $v_iv_j\in E(G)$ and the {\em neighborhood} $N(v)$ of a vertex $v$ is the set of its neighbors. The {\em degree} of a vertex $v$ is $\deg(v)=|N(v)|$. A graph is {\em $k$-regular} if $\deg(v_i)=k$ for all $1\leq i\leq n$. Let $\spec(M)$ denote the {\em spectrum} of a matrix $M$ and let $p_M(x)$ denote the {\em characteristic polynomial} of matrix $M$. The {\em spectral radius} of a matrix $M$ with eigenvalues $\nu_1\leq\dots\leq\nu_n$ is $\rho_{M}=\max_{1\leq i\leq n} |\nu_i|$. An $n\times n$ real symmetric matrix $M$ is {\em positive semi-definite} if $x^TMx\geq 0$ for all $x\in\R^n$. Equivalently, a real symmetric matrix $M$ is positive semi-definite if and only if all its eigenvalues are non-negative. If all eigenvalues are non-negative, observe $\rho_M=\nu_n$. Note all matrices we will consider are real and symmetric.

\section{The normalized distance Laplacian}\label{sec:NormDistLap}
As with the combinatorial Laplacian matrix, it is natural to define a normalized version of the distance Laplacian matrix. In this section, we introduce this new matrix and derive many proprieties of its eigenvalues.
\begin{defn}
The {\em normalized distance Laplacian matrix}, denoted $\NDL(G)$, or just $\NDL$, is the matrix with entries \[(\NDL(G))_{ij}= \begin{cases}
 \frac{-d(v_i,v_j)}{\sqrt{\tr(v_i)\tr(v_j)}} &  i \neq j \\
 1 &  i=j
 \end{cases}.\]
\end{defn}
Observe that $\NDL(G)=T(G)^{-1/2}\DL(G)T(G)^{-1/2}=I-T(G)^{-1/2}\D(G)T(G)^{-1/2}$. We call the eigenvalues of $\NDL(G)$ the {\em normalized distance Laplacian eigenvalues} and denote them $\partial^{\cL}_1\leq \dots\leq \partial^{\cL}_n$.

It is easy to draw parallels between the properties of $A, L, Q,$ and $\cL$ and the properties of $\D$, $\DL, \DQ,$ and $\NDL$. In the remainder of this section, we present results that are known to hold for the adjacency matrix and its Laplacians, followed by their generalizations to the distance matrices.

Both the normalized Laplacian and the normalized distance Laplacian include square roots (unless the graph is regular or transmission regular, respectively). This can make computation of eigenvalues difficult with these matrices. Because of this, we can turn to similar matrices that make computation slightly easier. In \cite{Chu97}, Chung introduces the matrix $D^{-1} L$, which one can easily see is similar to $\cL$ by the similarity matrix $D^{-1/2}$. The eigenvectors $\mathbf{v_i}$ of $D^{-1} L(G)$ are called the {\em harmonic eigenvectors of $\cL(G)$} and $\mathbf{v_i}=D^{-1/2}\mathbf{u_i}$ where $\mathbf{u_i}$ is an eigenvector of $\cL$. We now show an analogous similar matrix for $\NDL$.

\begin{prop}
For all eigenvalues $\partial_i^{\cL}$ of $\NDL$ and associated eigenvectors $\mathbf{x_i}$, $\partial_i^{\cL}$ is also an eigenvalue of $T^{-1}\DL$ with associated eigenvector $\mathbf{y_i}=T^{-1/2}\mathbf{x_i}$.
\end{prop}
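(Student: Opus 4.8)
The plan is to exploit the explicit factorization $\NDL(G)=T(G)^{-1/2}\DL(G)T(G)^{-1/2}$ recorded just after the definition, together with the fact that $T(G)$ is a genuine (invertible) positive diagonal matrix. Since every graph under consideration is connected with at least two vertices, each transmission $\tr(v_i)$ is a positive integer, so $T=T(G)$ is invertible and $T^{-1/2}=\diag(\tr(v_1)^{-1/2},\dots,\tr(v_n)^{-1/2})$ is well-defined and invertible. This is really the only thing that needs to be checked before the computation, and it is immediate; I would state it in one sentence.

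Next I would run the one-line similarity argument. Suppose $\NDL\,\mathbf{x_i}=\partial_i^{\cL}\,\mathbf{x_i}$ with $\mathbf{x_i}\neq \mathbf 0$. Substituting the factorization gives $T^{-1/2}\DL T^{-1/2}\mathbf{x_i}=\partial_i^{\cL}\mathbf{x_i}$. Multiplying both sides on the left by $T^{-1/2}$ yields
\[
T^{-1}\DL\bigl(T^{-1/2}\mathbf{x_i}\bigr)=\partial_i^{\cL}\bigl(T^{-1/2}\mathbf{x_i}\bigr),
\]
i.e.\ $T^{-1}\DL\,\mathbf{y_i}=\partial_i^{\cL}\,\mathbf{y_i}$ with $\mathbf{y_i}=T^{-1/2}\mathbf{x_i}$. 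Finally, because $T^{-1/2}$ is invertible, $\mathbf{y_i}\neq\mathbf 0$, so $\mathbf{y_i}$ is a genuine eigenvector and $\partial_i^{\cL}$ is indeed an eigenvalue of $T^{-1}\DL$.

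There is no real obstacle here; the statement is essentially the observation that $\NDL$ and $T^{-1}\DL$ are similar via $T^{-1/2}$, exactly mirroring Chung's passage from $\cL$ to $D^{-1}L$ mentioned above. If one wanted to say slightly more, one could note that the argument is reversible (multiply on the left by $T^{1/2}$ instead), so $T^{-1}\DL$ and $\NDL$ have the same spectrum with the eigenvector correspondence $\mathbf{x_i}\leftrightarrow T^{-1/2}\mathbf{x_i}$; but for the proposition as stated only the one direction is needed.
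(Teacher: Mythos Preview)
Your proof is correct and follows essentially the same similarity computation as the paper: both use the factorization $\NDL=T^{-1/2}\DL T^{-1/2}$ and multiply through by $T^{-1/2}$ to pass between the two matrices. Your version is slightly more careful in noting that $T^{-1/2}$ is invertible so that $\mathbf{y_i}\neq\mathbf{0}$, a point the paper leaves implicit.
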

\bpf
\begin{eqnarray*}
T^{-1}\DL \mathbf{y}_i&=&T^{-1}\DL T^{-1/2}\mathbf{x_i}\\
&=&T^{-1/2}\NDL\mathbf{x_i}\\
&=&T^{-1/2}\partial_i^{\cL}\mathbf{x_i}\\
&=&\partial_i^{\cL}\mathbf{y_i}
\end{eqnarray*}
So $\partial_i^{\cL}$ is an eigenvalue of $T^{-1}\DL$ with associated eigenvector $\mathbf{y_i}$, as desired.
\epf

Call the eigenvectors $\mathbf{y_i}$ of $T^{-1} \DL(G)$ the {\em harmonic eigenvectors of $\NDL(G)$}.

The following relationship between the eigenvalues of $A(G)$ and $\cL(G)$ can be observed using Sylvester's law of inertia. 
 
 \begin{prop}{\em \cite[p. 14]{But08}}\label{mult}
 The multiplicity of $0$ as an eigenvalue of $A(G)$ is the multiplicity of $1$ as an eigenvalue of $\cL(G)$, the number of negative eigenvalues for $A(G)$ is the number of eigenvalues greater than $1$ for $\cL(G)$, and the number of positive eigenvalues for $A(G)$ is the number of eigenvalues less than $1$ for $\cL(G)$.
 \end{prop}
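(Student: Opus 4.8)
The plan is to exploit the congruence between $A(G)$ and $I-\cL(G)$ and invoke Sylvester's law of inertia. Since $G$ has no isolated vertices, $D(G)$ is a diagonal matrix with positive diagonal entries, so $D(G)^{-1/2}$ is a real nonsingular matrix. From the identity $\cL(G)=I-D(G)^{-1/2}A(G)D(G)^{-1/2}$ recorded above we obtain $I-\cL(G)=D(G)^{-1/2}A(G)D(G)^{-1/2}$, which exhibits $I-\cL(G)$ as a congruence of $A(G)$ via the nonsingular matrix $D(G)^{-1/2}$.

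First I would apply Sylvester's law of inertia to this congruence to conclude that $A(G)$ and $I-\cL(G)$ have the same inertia: the same number of positive eigenvalues, the same number of negative eigenvalues, and the same multiplicity of $0$.

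Next I would translate these facts into statements about $\cL(G)$ itself. Writing $\mu_1\le\cdots\le\mu_n$ for the eigenvalues of $\cL(G)$, the eigenvalues of $I-\cL(G)$ are $1-\mu_n\le\cdots\le1-\mu_1$. Hence $0$ is an eigenvalue of $I-\cL(G)$ of multiplicity $m$ if and only if $1$ is an eigenvalue of $\cL(G)$ of multiplicity $m$; moreover $1-\mu_i>0$ exactly when $\mu_i<1$, so the number of positive eigenvalues of $I-\cL(G)$ equals the number of $\mu_i$ strictly less than $1$, and similarly $1-\mu_i<0$ exactly when $\mu_i>1$, so the number of negative eigenvalues of $I-\cL(G)$ equals the number of $\mu_i$ strictly greater than $1$. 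Combining these equivalences with the inertia equality from the previous step yields the three asserted statements.

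There is essentially no hard step here; the one point requiring care is that Sylvester's law preserves only inertia, not the individual eigenvalues, so the conclusion must be phrased purely in terms of counts of eigenvalues in the intervals $(-\infty,1)$, $\{1\}$, and $(1,\infty)$, with no claim of an explicit eigenvalue-by-eigenvalue correspondence. It is also worth observing that the same argument goes through verbatim with $A$ replaced by $\D$ and $D^{-1/2}$ replaced by $T^{-1/2}$ (legitimate since, for a connected graph on at least two vertices, $T(G)$ is positive definite), which is presumably the route to the analogous statement relating $\D(G)$ and $\NDL(G)$.
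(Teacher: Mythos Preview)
Your proof is correct and follows essentially the same approach as the paper: the paper explicitly attributes this result to Sylvester's law of inertia applied to the congruence $D(G)^{-1/2}A(G)D(G)^{-1/2}=I-\cL(G)$, and its proof of the $\D$/$\NDL$ analogue proceeds exactly as you describe. Your closing remark anticipating the distance version is also spot on.
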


The analogous result for $\D$ and $\NDL$ can be shown using the proof technique suggested by Butler in \cite{But08}. Two matrices $A$ and $B$ are {\em congruent} if there exists an invertible matrix $P$ such that $P^TAP=B$. Sylvester's law of inertia states that any two real symmetric matrices that are congruent have the same number of positive, negative, and zero eigenvalues.
\begin{prop}
The multiplicity of $0$ as an eigenvalue of $\D(G)$ is the multiplicity of $1$ as an eigenvalue of $\NDL(G)$, the number of negative eigenvalues for $\D(G)$ is the number of eigenvalues greater than $1$ for $\NDL(G)$, the number of positive eigenvalues for $\D(G)$ is the number of eigenvalues greater than $1$ for $\NDL(G)$.
\end{prop}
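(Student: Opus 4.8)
The plan is to follow Butler's technique for $A$ and $\cL$, turning it into a statement about a congruence and then invoking Sylvester's law of inertia. The starting point is the identity recorded just after the definition, $\NDL(G) = I - T(G)^{-1/2}\D(G)T(G)^{-1/2}$, which rearranges to $\NDL(G) - I = T(G)^{-1/2}\bigl(-\D(G)\bigr)T(G)^{-1/2}$. Since $G$ is connected, every transmission $\tr(v_i)$ is a positive integer, so $T(G)^{-1/2} = \diag\bigl(\tr(v_1)^{-1/2},\dots,\tr(v_n)^{-1/2}\bigr)$ is a well-defined invertible symmetric matrix; taking $P = T(G)^{-1/2}$ exhibits $\NDL(G) - I$ and $-\D(G)$ as congruent.

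Next I would apply Sylvester's law of inertia to conclude that $\NDL(G) - I$ and $-\D(G)$ have the same number of positive eigenvalues, the same number of negative eigenvalues, and the same number of zero eigenvalues, and then translate each of these three counts back. The eigenvalues of $\NDL(G) - I$ are the numbers $\partial^{\cL}_i - 1$, so a zero, positive, or negative eigenvalue of $\NDL(G) - I$ corresponds respectively to an eigenvalue of $\NDL(G)$ equal to $1$, greater than $1$, or less than $1$. Similarly the eigenvalues of $-\D(G)$ are the numbers $-\partial_i$, so a zero, positive, or negative eigenvalue of $-\D(G)$ corresponds respectively to a distance eigenvalue equal to $0$, less than $0$, or greater than $0$.

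Combining the two translations yields the statement: the multiplicity of $0$ as an eigenvalue of $\D(G)$ equals the multiplicity of $1$ as an eigenvalue of $\NDL(G)$; the number of negative distance eigenvalues equals the number of eigenvalues of $\NDL(G)$ greater than $1$; and the number of positive distance eigenvalues equals the number of eigenvalues of $\NDL(G)$ \emph{less than} $1$. (The third clause as printed should read "less than $1$"; the congruence forces this, since it is the negative eigenvalues of $-\D(G)$ that match the positive eigenvalues of $\D(G)$.)

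I do not expect a genuine obstacle. The argument is the displayed congruence plus bookkeeping; the only points requiring care are that $T(G)^{-1/2}$ is well defined, which is exactly where connectedness of $G$ enters, and that the sign change from $\D(G)$ to $-\D(G)$ interchanges "positive" and "negative" while fixing "zero", so the correspondence for the multiplicity of $1$ is immediate but the other two correspondences cross over.
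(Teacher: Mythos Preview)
Your argument is correct and is essentially the paper's own proof: establish a congruence via $T(G)^{-1/2}$, invoke Sylvester's law of inertia, and translate the signs across the shift by $1$. You also correctly flag the typo in the third clause, which indeed should read ``less than $1$''.
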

\bpf
Since $(T(G)^{-1/2})^T=T(G)^{-1/2}$, $\D(G)$ is congruent to $T(G)^{-1/2}\D(G)T(G)^{-1/2}$, and therefore they have the same number of positive, negative, and zero eigenvalues. It is easy to see $0$ is an eigenvalue of $T(G)^{-1/2}\D(G)T(G)^{-1/2}$ if and only if $1$ is an eigenvalue of $\NDL(G)$. If $\nu< 0$ is an eigenvalue of $T(G)^{-1/2}\D(G)T(G)^{-1/2}$, then $1-\nu> 1$ is an eigenvalue of $\NDL(G)$. Similarly, if $\nu> 0$ is an eigenvalue of $T(G)^{-1/2}\D(G)T(G)^{-1/2}$, then $1-\nu< 1$ is an eigenvalue of $\NDL(G)$.
\epf

In special cases, we may deduce an exact relationship between the eigenvalues of various matrices. The following fact is easy to observe and well-known in the literature.

\begin{obs}\label{Obs:Reg} For a $r$-regular weighted graph, $D(G)=rI$ so for every adjacency eigenvalue $\lambda_i$, $\phi_i=r-\lambda_i$, $q_i=r+\lambda_i$, and $\mu_i=1-\frac{1}{r}\lambda_{i}$. Similarly, for a $k$-transmission regular graph, $T(G)=kI$ so for every distance eigenvalue $\partial_i$, $\partial^L_i=k-\partial_i$ and $\partial^{Q}_i=k+\partial_i$.
\end{obs}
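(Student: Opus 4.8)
The plan is to use the fact that for a regular (respectively, transmission-regular) graph the relevant diagonal scaling matrix is a scalar multiple of the identity, which turns every Laplacian-type matrix into an affine function of the single matrix $A(G)$ (respectively, $\D(G)$); the eigenvectors are then shared across all the matrices and the eigenvalue relations follow by inspection.

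Concretely, I would first record that if $G$ is $r$-regular then every diagonal entry of $D(G)$ equals $r$, so $D(G)=rI$ and $D(G)^{-1/2}=\tfrac{1}{\sqrt r}I$. Substituting into the definitions gives $L(G)=D(G)-A(G)=rI-A(G)$, $Q(G)=D(G)+A(G)=rI+A(G)$, and $\cL(G)=I-D(G)^{-1/2}A(G)D(G)^{-1/2}=I-\tfrac{1}{r}A(G)$. Fixing a basis $\mathbf{v}_1,\dots,\mathbf{v}_n$ of eigenvectors of $A(G)$ with $A(G)\mathbf{v}_i=\lambda_i\mathbf{v}_i$, each $\mathbf{v}_i$ is simultaneously an eigenvector of $L(G)$, $Q(G)$, and $\cL(G)$, with eigenvalues $r-\lambda_i$, $r+\lambda_i$, and $1-\tfrac{1}{r}\lambda_i$ respectively; since this accounts for all $n$ eigenvectors, these are all the eigenvalues. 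The transmission-regular case is identical word for word with $T(G)=kI$ in place of $D(G)=rI$ and $\D(G)$ in place of $A(G)$: then $\DL(G)=T(G)-\D(G)=kI-\D(G)$ and $\DQ(G)=T(G)+\D(G)=kI+\D(G)$, so the eigenvectors of $\D(G)$ carry over and the distance eigenvalue $\partial_i$ produces the eigenvalue $k-\partial_i$ of $\DL(G)$ and $k+\partial_i$ of $\DQ(G)$.

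There is no real obstacle here; the only point requiring a little care is the indexing convention. Because adjacency and distance eigenvalues are listed in decreasing order while the Laplacian-type eigenvalues are listed in increasing order, one should check that the monotonicity of the relevant affine map ($t\mapsto r-t$ is decreasing, $t\mapsto 1-t/r$ is decreasing, and likewise $t\mapsto k-t$) aligns the sorted lists so that the stated index-by-index identities hold; for the signless versions $Q$ and $\DQ$, where the affine map is increasing, the assertion is the corresponding multiset equality. Beyond this bookkeeping, the observation is immediate.
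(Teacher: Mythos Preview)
Your proposal is correct and is exactly the standard argument the paper has in mind; indeed, the paper offers no proof at all, introducing the statement as ``easy to observe and well-known,'' so your derivation via $L=rI-A$, $Q=rI+A$, $\cL=I-\tfrac1r A$ (and the analogous distance identities) is precisely what is intended. Your remark on the indexing convention is apt: under the paper's ordering the identities $\phi_i=r-\lambda_i$, $\mu_i=1-\tfrac1r\lambda_i$, and $\partial^L_i=k-\partial_i$ hold index by index, whereas $q_i=r+\lambda_i$ and $\partial^Q_i=k+\partial_i$ are best read as multiset equalities, just as you note.
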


For $k$-transmission regular graphs, the relationships between the eigenvalues of $\NDL$ and $\D,\DL,\DQ$ are also easily observed.

\begin{obs}\label{Obs:TransReg}
For a $k$-transmission regular graph $G$, the normalized distance Laplacian eigenvalues are $\partial^{\cL}_{i}=\frac{1}{k}\partial^{L}_{i}=1-\frac{1}{k}\partial_{i}=2-\frac{1}{k}\partial^{Q}_{i}$.
\end{obs}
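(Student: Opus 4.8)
The plan is to exploit the fact that $k$-transmission regularity collapses the normalizing matrix to a scalar. First I would observe that if $G$ is $k$-transmission regular then $T(G)=kI$, hence $T(G)^{-1/2}=\frac{1}{\sqrt k}I$. Substituting into the definition $\NDL(G)=T(G)^{-1/2}\DL(G)T(G)^{-1/2}$ immediately gives $\NDL(G)=\frac{1}{k}\DL(G)$, so the two matrices have the same eigenvectors and their eigenvalues differ by the factor $\frac{1}{k}$; since both orderings are smallest-to-largest, this yields $\partial^{\cL}_i=\frac{1}{k}\partial^{L}_i$ termwise.

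Next I would bring in Observation \ref{Obs:Reg}, which for a $k$-transmission regular graph records $\partial^{L}_i=k-\partial_i$ and $\partial^{Q}_i=k+\partial_i$. Dividing the first relation by $k$ gives $\frac{1}{k}\partial^{L}_i=1-\frac{1}{k}\partial_i$, establishing the second equality in the chain. For the last equality, I would solve $\partial^{Q}_i=k+\partial_i$ for $\partial_i$ to get $\partial_i=\partial^{Q}_i-k$, and substitute into $1-\frac{1}{k}\partial_i$ to obtain $1-\frac{1}{k}(\partial^{Q}_i-k)=2-\frac{1}{k}\partial^{Q}_i$, completing the chain of identities.

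There is essentially no obstacle here: the only thing to be mildly careful about is the consistency of the eigenvalue orderings. Scaling by the positive constant $\frac{1}{k}$ preserves order, and the relations $\partial^L_i=k-\partial_i$ and $\partial^Q_i=k+\partial_i$ from Observation \ref{Obs:Reg} already reflect the fact that the distance eigenvalues $\partial_i$ are indexed largest-to-smallest while the (signless) distance Laplacian eigenvalues are indexed smallest-to-largest, so the indices match up correctly throughout. Hence the stated string of equalities holds for each $i$, and the observation follows.
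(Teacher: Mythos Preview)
Your proposal is correct and is exactly the argument the paper has in mind: the paper presents this as an observation with no written proof, merely noting that for $k$-transmission regular graphs the relationships ``are also easily observed'' immediately after Observation~\ref{Obs:Reg}. Your write-up simply makes explicit the substitution $T(G)=kI$ and the algebra linking the three equalities, which is precisely the intended (and only natural) route.
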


This observation can be applied to compute the $\NDL$-spectrum for some transmission regular graph families. The spectrum of $\DL(K_n)$ is $\{0,n^{(n-1)}\}$ \cite{AH13} and the complete graph is $n-1$-transmission regular, so it is easy to observe $\spec(\NDL(K_n))=\{0,\frac{n}{n-1}^{(n-1)}\}$.

In \cite{AH13}, the distance Laplacian eigenvalues are given for a cycle. For even length cycles where $n=2p$, \[\spec(\DL(C_n))=\left\{0, \left(\frac{n^2}{4}\right)^{(p-1)},\frac{n^2}{4}+\csc^2\left(\frac{\pi (2j-1)}{n}\right)\right\}\, \text{for} \, j=1,\dots,p\]
and for odd length cycles where $n=2p+1$, \[\spec(\DL(C_n))=\left\{0,\frac{n^2-1}{4}+\frac{1}{4}\sec^2\left(\frac{\pi j}{n}\right), \frac{n^2-1}{4}-\frac{1}{4}\sec^2\left(\frac{\pi (2j-1)}{2n}\right)\right\} \, \text{for} \, j=1,\dots,p.\]

The cycle is a transmission regular graph with transmission $\frac{n^2}{4}$ when $n$ is even and transmission $\frac{n^2-1}{4}$ when $n$ is odd. So we can apply Observation \ref{Obs:TransReg} to these known spectra to obtain the eigenvalues of $\NDL(C_n)$.
\begin{prop}
Let $C_n$ be the cycle on $n$ vertices. Then if $n=2p$ is even, \[\spec(\NDL(C_n))=\left\{0, 1^{(p-1)},1+\frac{4}{n^2}\csc^2\left(\frac{\pi (2j-1)}{n}\right)\right\}\, \text{for} \, j=1,\dots,p\]
and if $n=2p+1$ is odd, \[\spec(\NDL(C_n))=\left\{0,1+\frac{1}{n^2-1}\sec^2\left(\frac{\pi j}{n}\right), 1-\frac{1}{n^2-1}\sec^2\left(\frac{\pi (2j-1)}{2n}\right)\right\} \, \text{for} \, j=1,\dots,p.\]

\end{prop}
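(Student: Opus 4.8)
The plan is to reduce the statement entirely to Observation \ref{Obs:TransReg} together with the known $\DL$-spectra of $C_n$ quoted from \cite{AH13}. The only ingredient that still requires checking is that $C_n$ is transmission regular and that its common transmission equals $n^2/4$ (for $n$ even) or $(n^2-1)/4$ (for $n$ odd); this supplies the scaling constant $k$.

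First I would verify transmission regularity. Since $C_n$ is vertex-transitive, every vertex has the same transmission, so it suffices to compute $\tr(v)$ for one vertex. Reading the distances to the other $n-1$ vertices around the cycle gives the multiset $1,1,2,2,\dots$; when $n=2p$ this is $1,1,2,2,\dots,p-1,p-1,p$, so $\tr(v)=2\sum_{i=1}^{p-1} i + p = p(p-1)+p = p^2 = n^2/4$, and when $n=2p+1$ this is $1,1,2,2,\dots,p,p$, so $\tr(v)=2\sum_{i=1}^{p} i = p(p+1) = (n^2-1)/4$. Hence $C_n$ is $k$-transmission regular with the values claimed in the text.

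Second, I would apply Observation \ref{Obs:TransReg}: for a $k$-transmission regular graph, $\partial^{\cL}_i = \tfrac{1}{k}\partial^{L}_i$, so the $\NDL$-spectrum is obtained simply by dividing each distance Laplacian eigenvalue by $k$. For $n=2p$, dividing $\{0,(n^2/4)^{(p-1)},\tfrac{n^2}{4}+\csc^2(\pi(2j-1)/n)\}$ by $k=n^2/4$ sends $0$ to $0$, the repeated value $n^2/4$ to $1$ (with the same multiplicity $p-1$, since scaling by a nonzero constant is a multiplicity-preserving bijection on eigenvalues), and the remaining eigenvalues to $1+\tfrac{4}{n^2}\csc^2(\pi(2j-1)/n)$. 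For $n=2p+1$, dividing the two families by $k=(n^2-1)/4$ leaves the constant term equal to $1$ and turns $\tfrac14\sec^2(\cdot)$ into $\tfrac{1}{n^2-1}\sec^2(\cdot)$, producing the two displayed formulas.

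There is no real obstacle here; the argument is a direct substitution into Observation \ref{Obs:TransReg}. The only point requiring care is the transmission computation for the two parities, since an off-by-one error in $k$ would rescale every trigonometric term incorrectly, and keeping track of the multiplicity $p-1$ of the eigenvalue $1$ in the even case.
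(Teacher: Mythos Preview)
Your proposal is correct and follows exactly the paper's approach: the paper states that $C_n$ is transmission regular with transmission $n^2/4$ (even) or $(n^2-1)/4$ (odd) and then applies Observation~\ref{Obs:TransReg} to the $\DL$-spectra from \cite{AH13}. You have supplied the explicit transmission computation that the paper leaves implicit, but the method is the same.
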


In her book that describes the normalized Laplacian matrix \cite{Chu97}, Chung finds many bounds on the eigenvalues of $\cL$. We now show similar results hold for the eigenvalues of the normalized distance Laplacian. The first result provides a range in which all eigenvalues of $\cL$ lie and notes that both bounds are achieved. The result appears with proof in \cite{Chu97} and is stated without proof for weighted graphs in \cite{BC}; one can verify the proof from \cite{Chu97} remains valid for weighted graphs.

 \begin{thm}{\em \cite{Chu97}}\label{ChuBnds}
For all weighted connected graphs $G$, \[0=\mu_1< \mu_2\leq \dots\leq \mu_n\leq 2,\]
with $\mu_n=2$ if and only if $G$ is non-trivial and bipartite.
 \end{thm}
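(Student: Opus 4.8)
The plan is to route everything through the \emph{normalized signless Laplacian} $D^{-1/2}QD^{-1/2}$ and to use congruence (Sylvester's law of inertia), just as was done above for the distance matrices. First I would record that $\cL(G)=D(G)^{-1/2}L(G)D(G)^{-1/2}$ is congruent to $L(G)$ (the square-root factor is symmetric and invertible since $G$ has no isolated vertices), and that $x^TL(G)x=\sum_{\{v_i,v_j\}\in E}w(v_i,v_j)(x_i-x_j)^2\ge 0$, so $L(G)$ and hence $\cL(G)$ are positive semi-definite; thus every $\mu_i\ge 0$. Since $L(G)\mathbf 1=0$, the vector $D(G)^{1/2}\mathbf 1$ lies in the kernel of $\cL(G)$, so $0$ is an eigenvalue and $\mu_1=0$. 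For the strict inequality $\mu_2>0$, I would identify $\ker\cL(G)$: since $\cL(G)y=0$ exactly when $D(G)^{-1/2}y\in\ker L(G)$, and connectivity forces $x^TL(G)x=0$ only for $x$ constant (propagate the equalities $x_i=x_j$ across the edges of a spanning tree), we get $\ker L(G)=\operatorname{span}\{\mathbf 1\}$, hence $\ker\cL(G)=\operatorname{span}\{D(G)^{1/2}\mathbf 1\}$ is one-dimensional. Therefore $0=\mu_1<\mu_2$.

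For the upper bound, the key identity is $2I-\cL(G)=I+D(G)^{-1/2}A(G)D(G)^{-1/2}=D(G)^{-1/2}\bigl(D(G)+A(G)\bigr)D(G)^{-1/2}=D(G)^{-1/2}Q(G)D(G)^{-1/2}$. Since $x^TQ(G)x=\sum_{\{v_i,v_j\}\in E}w(v_i,v_j)(x_i+x_j)^2\ge 0$, the matrix $Q(G)$ is positive semi-definite, and by congruence so is $2I-\cL(G)$; hence $2-\mu_n\ge 0$, i.e. $\mu_n\le 2$.

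For the equality case I would argue: $\mu_n=2$ iff $2I-\cL(G)=D(G)^{-1/2}Q(G)D(G)^{-1/2}$ is singular, iff $Q(G)$ is singular (congruence preserves rank), iff there is a nonzero $x$ with $\sum_{\{v_i,v_j\}\in E}w(v_i,v_j)(x_i+x_j)^2=0$ (using positive semi-definiteness), iff $x_i=-x_j$ for every edge $\{v_i,v_j\}$ (the weights are strictly positive). If $G$ is connected with at least one edge, such an $x$ forces the sign to alternate along every walk, so $|x_i|$ is a fixed positive constant and the sign pattern partitions $V$ into two classes with no edge inside a class: $G$ is bipartite. Conversely, if $G$ is non-trivial and bipartite with parts $X$ and $Y$, setting $x_v=1$ on $X$ and $x_v=-1$ on $Y$ gives $x_i+x_j=0$ on every edge, so $Q(G)x=0$, $2I-\cL(G)$ is singular, and $\mu_n=2$.

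The main obstacle is the equality analysis: one must check carefully that $x^TQ(G)x=0$ together with connectivity forces both that $x$ is nowhere zero and that the induced $\pm$ labeling is a proper $2$-coloring (so an odd cycle would contradict consistency of the alternation), and conversely that the bipartite $\pm 1$ vector genuinely lies in $\ker Q(G)$ and corresponds to a nonzero eigenvector $D(G)^{1/2}x$ of $\cL(G)$. Everything else is a congruence argument plus the two Rayleigh-quotient identities $x^TL(G)x=\sum w(x_i-x_j)^2$ and $x^TQ(G)x=\sum w(x_i+x_j)^2$; the weighted case adds nothing since positivity of the weights means $w(v_i,v_j)(x_i\pm x_j)^2=0\iff x_i\pm x_j=0$.
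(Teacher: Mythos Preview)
Your argument is correct. The paper itself does not supply a proof of this theorem; it quotes the result from Chung's book and merely remarks that the proof there goes through verbatim for weighted graphs. So there is no ``paper's own proof'' to compare against beyond Chung's original.

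For what it is worth, Chung's argument is essentially the same as yours but phrased through the Rayleigh quotient rather than through congruence with $Q$: she writes $\mu_n=\max_{f}\dfrac{\sum_{\{u,v\}\in E}w(u,v)(f(u)-f(v))^2}{\sum_v \deg(v)f(v)^2}$ and bounds the numerator by $2\sum_v \deg(v)f(v)^2$ via $(a-b)^2\le 2a^2+2b^2$, with equality forcing $f(u)=-f(v)$ on every edge. Your route through $2I-\cL(G)=D^{-1/2}Q(G)D^{-1/2}$ and Sylvester's law of inertia is the same computation repackaged, and it has the pleasant feature of matching the congruence style the paper uses for the distance matrices. The equality analysis you sketch (propagating $x_i=-x_j$ along edges of a connected graph to get a nowhere-zero $\pm$-labeling, hence a bipartition) is exactly what is needed and is handled correctly.
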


This result generalizes with one notable difference: The normalized distance Laplacian never achieves 2 as an eigenvalue for $n\geq 3$. Observe the normalized distance Laplacian of a graph $G$ is the normalized Laplacian of the weighted complete graph $W(G)$ with edges weights $w_{W(G)}(v_i,v_j)=d_G(v_i,v_j)$. Then the degree of a vertex $v_i$ in $W(G)$ is the transmission of $v_i$ in $G$. The next result is an application of Theorem \ref{ChuBnds} to $W(G)$ along with the observation that complete graphs on $n\geq 3$ vertices are not bipartite.

\begin{cor}
For all graphs $G$, \[0=\partial^{\cL}_1< \partial^{\cL}_2\leq \dots\leq \partial^{\cL}_n=\rho_{\NDL}\leq 2,\]
and for $n\geq 3$, $\partial^{\cL}_n< 2$.
\end{cor}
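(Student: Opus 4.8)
The plan is to exploit the identification, already flagged in the paragraph preceding the corollary, of $\NDL(G)$ with the normalized Laplacian of a weighted graph, and then read the conclusion straight off Theorem~\ref{ChuBnds}. First I would make that identification precise. Let $W(G)$ be the weighted complete graph on the vertex set $V(G)$ with weight function $w(v_i,v_j)=d_G(v_i,v_j)$; since $G$ is connected these weights are positive finite integers, so $W(G)$ is a legitimate weighted connected graph in the sense defined in the introduction. The degree of $v_i$ in $W(G)$ is $\sum_{j\neq i} d_G(v_i,v_j)=\tr_G(v_i)$, so the degree matrix of $W(G)$ is exactly $T(G)$. Substituting into the definition of the normalized Laplacian, the $(i,j)$ entry of $\cL(W(G))$ for $i\neq j$ is $-\tfrac{d_G(v_i,v_j)}{\sqrt{\tr(v_i)\tr(v_j)}}$ and the diagonal entries are $1$, which is precisely $\NDL(G)$. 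Hence $\spec(\NDL(G))=\spec(\cL(W(G)))$.

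Next I would apply Theorem~\ref{ChuBnds} to the weighted connected graph $W(G)$: its normalized Laplacian eigenvalues satisfy $0=\mu_1<\mu_2\le\cdots\le\mu_n\le 2$, with $\mu_n=2$ if and only if $W(G)$ is non-trivial and bipartite. Transporting this through the identification of the previous paragraph yields $0=\partial^{\cL}_1<\partial^{\cL}_2\le\cdots\le\partial^{\cL}_n\le 2$. Since $\NDL(G)=T(G)^{-1/2}\DL(G)T(G)^{-1/2}$ is congruent to the positive semi-definite matrix $\DL(G)$, it is itself positive semi-definite, so all of its eigenvalues are non-negative and therefore $\rho_{\NDL}=\partial^{\cL}_n$, which justifies the displayed equality in the statement.

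For the strict inequality when $n\ge 3$, I would observe that the underlying unweighted graph of $W(G)$ is the complete graph $K_n$, which contains a triangle and hence is not bipartite; assigning positive weights to the edges of a graph does not change whether it is bipartite, so $W(G)$ is not bipartite, the equality case of Theorem~\ref{ChuBnds} is excluded, and $\partial^{\cL}_n<2$. It is worth recording in the proof (or a following remark) that for $n=2$ we have $W(G)=K_2$, which is bipartite, so $\partial^{\cL}_2=2$; this both shows that the weak bound $2$ in the statement is attained and confirms that the hypothesis $n\ge 3$ cannot be dropped.

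I do not anticipate a genuine obstacle here: essentially all the content lies in verifying that $\NDL(G)$ really is $\cL(W(G))$ and that the hypotheses of Theorem~\ref{ChuBnds} are met for $W(G)$ (connectedness of the underlying graph and strict positivity of the weights, both immediate from the connectedness of $G$). The one place that calls for a sentence of care is the bipartiteness criterion in the equality case, for which it suffices to note that the bipartiteness of a weighted graph depends only on its underlying graph, together with the elementary fact that $K_n$ is non-bipartite for $n\ge 3$.
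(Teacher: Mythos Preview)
Your proposal is correct and matches the paper's own argument essentially verbatim: identify $\NDL(G)$ with $\cL(W(G))$ for the weighted complete graph $W(G)$, apply Theorem~\ref{ChuBnds}, and invoke the non-bipartiteness of $K_n$ for $n\ge 3$ to exclude the equality case. If anything, you have supplied more detail than the paper (the positive semi-definiteness justification for $\rho_{\NDL}=\partial^{\cL}_n$ and the remark on $n=2$), all of which is sound.
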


Since this bound is not tight, a natural next question is: Which graphs have the largest spectral radius? Using a {\em Sage} search \cite{Sage1}, the maximum spectral radius of any graph on a given numbers of vertices was determined for $n\leq 10$. The results are listed in Table \ref{tab:MaxRho} below. Define $KPK_{n_1,n_2,n_3}$ for $n_1,n_3\geq 1$, $n_2\geq 2$ to be the graph formed by taking the vertex sum of a vertex in $K_{n_1}$ with one end of the path $P_{n_2}$ and the vertex sum of a vertex in $K_{n_3}$ with the other end of $P_{n_2}$. Note the number of vertices is $n=n_1+n_2+n_3-2$ and $KPK_{1,n,1}=KPK_{2,n-1,1}=KPK_{2,n-2,2}=P_{n}$.

\begin{figure}[h!]
\begin{center}
\begin{tikzpicture}[scale=0.7]
\definecolor{cv0}{rgb}{0.0,0.0,0.0}
\definecolor{cfv0}{rgb}{1.0,1.0,1.0}
\definecolor{clv0}{rgb}{0.0,0.0,0.0}
\definecolor{cv1}{rgb}{0.0,0.0,0.0}
\definecolor{cfv1}{rgb}{1.0,1.0,1.0}
\definecolor{clv1}{rgb}{0.0,0.0,0.0}
\definecolor{cv2}{rgb}{0.0,0.0,0.0}
\definecolor{cfv2}{rgb}{1.0,1.0,1.0}
\definecolor{clv2}{rgb}{0.0,0.0,0.0}
\definecolor{cv3}{rgb}{0.0,0.0,0.0}
\definecolor{cfv3}{rgb}{1.0,1.0,1.0}
\definecolor{clv3}{rgb}{0.0,0.0,0.0}
\definecolor{cv4}{rgb}{0.0,0.0,0.0}
\definecolor{cfv4}{rgb}{1.0,1.0,1.0}
\definecolor{clv4}{rgb}{0.0,0.0,0.0}
\definecolor{cv5}{rgb}{0.0,0.0,0.0}
\definecolor{cfv5}{rgb}{1.0,1.0,1.0}
\definecolor{clv5}{rgb}{0.0,0.0,0.0}
\definecolor{cv6}{rgb}{0.0,0.0,0.0}
\definecolor{cfv6}{rgb}{1.0,1.0,1.0}
\definecolor{clv6}{rgb}{0.0,0.0,0.0}
\definecolor{cv7}{rgb}{0.0,0.0,0.0}
\definecolor{cfv7}{rgb}{1.0,1.0,1.0}
\definecolor{clv7}{rgb}{0.0,0.0,0.0}
\definecolor{cv8}{rgb}{0.0,0.0,0.0}
\definecolor{cfv8}{rgb}{1.0,1.0,1.0}
\definecolor{clv8}{rgb}{0.0,0.0,0.0}
\definecolor{cv0v3}{rgb}{0.0,0.0,0.0}
\definecolor{cv0v6}{rgb}{0.0,0.0,0.0}
\definecolor{cv0v8}{rgb}{0.0,0.0,0.0}
\definecolor{cv1v4}{rgb}{0.0,0.0,0.0}
\definecolor{cv1v7}{rgb}{0.0,0.0,0.0}
\definecolor{cv2v5}{rgb}{0.0,0.0,0.0}
\definecolor{cv2v7}{rgb}{0.0,0.0,0.0}
\definecolor{cv3v6}{rgb}{0.0,0.0,0.0}
\definecolor{cv3v8}{rgb}{0.0,0.0,0.0}
\definecolor{cv4v7}{rgb}{0.0,0.0,0.0}
\definecolor{cv5v8}{rgb}{0.0,0.0,0.0}
\definecolor{cv6v8}{rgb}{0.0,0.0,0.0}
\Vertex[style={minimum
size=1.0cm,draw=cv0,fill=cfv0,text=clv0,shape=circle},LabelOut=false,L=\hbox{$1$},x=0.0cm,y=0cm]{v0}
\Vertex[style={minimum
size=1.0cm,draw=cv1,fill=cfv1,text=clv1,shape=circle},LabelOut=false,L=\hbox{$9$},x=9cm,y=1.0cm]{v1}
\Vertex[style={minimum
size=1.0cm,draw=cv2,fill=cfv2,text=clv2,shape=circle},LabelOut=false,L=\hbox{$6$},x=6cm,y=0cm]{v2}
\Vertex[style={minimum
size=1.0cm,draw=cv3,fill=cfv3,text=clv3,shape=circle},LabelOut=false,L=\hbox{$2$},x=1cm,y=-1cm]{v3}
\Vertex[style={minimum
size=1.0cm,draw=cv4,fill=cfv4,text=clv4,shape=circle},LabelOut=false,L=\hbox{$8$},x=9cm,y=-1cm]{v4}
\Vertex[style={minimum
size=1.0cm,draw=cv5,fill=cfv5,text=clv5,shape=circle},LabelOut=false,L=\hbox{$5$},x=4cm,y=0cm]{v5}
\Vertex[style={minimum
size=1.0cm,draw=cv6,fill=cfv6,text=clv6,shape=circle},LabelOut=false,L=\hbox{$3$},x=1cm,y=1cm]{v6}
\Vertex[style={minimum
size=1.0cm,draw=cv7,fill=cfv7,text=clv7,shape=circle},LabelOut=false,L=\hbox{$7$},x=8cm,y=0cm]{v7}
\Vertex[style={minimum
size=1.0cm,draw=cv8,fill=cfv8,text=clv8,shape=circle},LabelOut=false,L=\hbox{$4$},x=2cm,y=0cm]{v8}
\Edge[lw=0.1cm,style={color=cv0v3,},](v0)(v3)
\Edge[lw=0.1cm,style={color=cv0v6,},](v0)(v6)
\Edge[lw=0.1cm,style={color=cv0v8,},](v0)(v8)
\Edge[lw=0.1cm,style={color=cv1v4,},](v1)(v4)
\Edge[lw=0.1cm,style={color=cv1v7,},](v1)(v7)
\Edge[lw=0.1cm,style={color=cv2v5,},](v2)(v5)
\Edge[lw=0.1cm,style={color=cv2v7,},](v2)(v7)
\Edge[lw=0.1cm,style={color=cv3v6,},](v3)(v6)
\Edge[lw=0.1cm,style={color=cv3v8,},](v3)(v8)
\Edge[lw=0.1cm,style={color=cv4v7,},](v4)(v7)
\Edge[lw=0.1cm,style={color=cv5v8,},](v5)(v8)
\Edge[lw=0.1cm,style={color=cv6v8,},](v6)(v8)
\end{tikzpicture}
\end{center}
\caption{$KPK_{4,4,3}$}\label{Fig:KPK}
\end{figure}
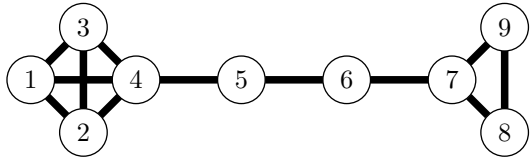

\begin{table}[h!]
\begin{center}
\begin{tabular}{|c|c|c|}
    n & $\rho_{\NDL}$ & Graph  \\
    \hline
    2 & 2 & $KPK_{1,2,1}$ \\
    3 & 1.666 & $KPK_{2,2,1}$\\
    4 & 1.614 & $KPK_{2,2,2}$\\
    5 & 1.589 & $KPK_{2,3,2}$\\
    6 & 1.578 & $KPK_{3,3,2}$\\
    7 & 1.586 & $KPK_{3,3,3}$\\
    8 & 1.590 & $KPK_{3,4,3}$\\
    9 & 1.594 & $KPK_{4,4,3}$\\
    10 & 1.603 & $KPK_{4,4,4}$\\
\end{tabular}
\caption{The maximum $\rho_{\NDL}$ and graph that achieves it for all graphs on 10 or fewer vertices}
\label{tab:MaxRho}
\end{center}
\end{table}

It is natural to conjecture a pattern from the graphs given in Table \ref{tab:MaxRho}. For example, for $n=3\ell$, one might conjecture that the graph achieving the maximum value of $\rho_{\NDL}$ is $KPK_{\ell+1,\ell+1,\ell}$. However, as $n$ grows larger, this pattern does not hold. For example, when $n=15$, $\rho_{\NDL}(KPK_{6,5,6})>\rho_{\NDL}(KPK_{6,6,5})$. In Table \ref{tab:LargeRho} we provide evidence that $\rho_{\NDL}(KPK_{n_1,n_2,n_3})$ tends towards 2 as $n$ becomes large for some $n_1,n_2,n_3$. Note that these graphs were the graphs with largest $\rho_{\NDL}$ found by checking several graphs in the $KPK_{n_1,n_2,n_3}$ family on {\em Sage}, and are not guaranteed to have the largest $\rho_{\NDL}$ of all graphs on $n$ vertices or even within the family $KPK_{n_1,n_2,n_3}$. This data leads to the next conjecture.

\begin{table}[h!]
\begin{center}
\begin{tabular}{|c|c|c|}
    n & $\rho_{\NDL}$ & Graph  \\
    \hline
    15 & 1.634 & $KPK_{6,5,6}$\\
    20 & 1.661 & $KPK_{8,6,8}$\\
    25 & 1.682 & $KPK_{10,7,10}$\\
    50& 1.748 & $KPK_{21,10,21}$\\
    100& 1.808 & $KPK_{43,16,43}$\\
    200 & 1.857 & $KPK_{90,22,90}$\\
    400 & 1.895 & $KPK_{184,34,184}$\\
    600 & 1.913 & $KPK_{280,42,280}$\\
    800 & 1.924 & $KPK_{377,48,377}$\\
\end{tabular}
\caption{Evidence that maximum $\rho_{\NDL}$ approaches 2 as $n$ becomes large, data from {\em Sage} \cite{Sage1} }
\label{tab:LargeRho}
\end{center}
\end{table}

\begin{conj}
The maximum $\NDL$ spectral radius achieved by a graph on $n$ vertices tends to $2$ as $n\to\infty$ and is achieved by $KPK_{n_1,n_2,n_3}$ for some $n_1+n_2+n_3=n+2$.
\end{conj}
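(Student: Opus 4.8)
The conjecture bundles two claims: that the extremal value tends to $2$, and that for each $n$ it is attained inside the $KPK$ family. The upper bound $\rho_{\NDL}<2$ for $n\ge 3$ is precisely the corollary above, so for the first claim it is enough to produce graphs whose normalized distance Laplacian spectral radius is arbitrarily close to $2$, and the $KPK$ graphs are the obvious candidates. The second claim is a genuine extremal-graph statement, and that is where the real difficulty lies.

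For the lower bound the plan is to argue variationally. Substituting $\bx=T^{1/2}\by$ in the Rayleigh quotient for $\NDL=T^{-1/2}\DL T^{-1/2}$, and using $\by^T\DL\by=\sum_{i<j}d(v_i,v_j)(y_i-y_j)^2$ together with $\by^TT\by=\sum_{i<j}d(v_i,v_j)(y_i^2+y_j^2)$, one obtains
\[
\rho_{\NDL}(G)=\partial^{\cL}_n=\max_{\by\neq 0}\ \frac{\sum_{i<j}d(v_i,v_j)\,(y_i-y_j)^2}{\sum_{i<j}d(v_i,v_j)\,(y_i^2+y_j^2)} ,
\]
and since $(y_i-y_j)^2\le 2(y_i^2+y_j^2)$, this quotient is close to $2$ exactly when almost all of the distance weight sits on pairs with $y_i\approx -y_j$. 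In a near-balanced $KPK_{a,b,a'}$ (two cliques of size $a\approx a'\approx (n-b+2)/2$ joined through a path $P_b$) I would take $\by$ equal to $+1$ on one clique, $-1$ on the other, and $0$ on the interior of the connecting path. Every within-clique distance equals $1$, every clique-to-clique distance lies in $\{b-1,b,b+1\}$, and every clique-to-path distance is at most $b$; counting pairs then shows the numerator is $4a^2(b+1)$ and the denominator $2a^2(b+2)$, each up to an error of order $ab^2$ that is negligible once $b=o(a)$. Hence $\rho_{\NDL}(KPK_{a,b,a'})\ge \tfrac{2(b+1)}{b+2}-o(1)$, which tends to $2$ as $b\to\infty$. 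Choosing, for each $n$, $b=b_n\to\infty$ with $b_n=o(n)$ (say $b_n=\lceil\sqrt n\,\rceil$) and splitting $n-b_n+2$ between the two cliques as evenly as possible yields graphs on $n$ vertices with $\rho_{\NDL}\to 2$; together with the corollary this proves the limit is $2$ and is approached within the $KPK$ family. (If the exact value is wanted, the non-distinguished vertices of each clique form twin classes, so by the methods of Section~\ref{sec:EigenDet} --- or by the equitable partition refined by the clique-swapping symmetry --- $\rho_{\NDL}(KPK_{a,b,a'})$ is the largest root of an explicit polynomial of degree $O(b)$, which could then be optimized over $(n_1,n_2,n_3)$.)

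The main obstacle is the extremal claim that for each $n$ the maximizer is some $KPK_{n_1,n_2,n_3}$. A plausible route is: (i) show that an extremal $G$ carries a dominant $\NDL$-eigenvector whose support partitions $V(G)$ into two ``clusters'' on which it is close to $+1$ and to $-1$, the approximation sharpening with $n$ because the quotient above is pushed toward its bound on the heavily weighted pairs; (ii) show each cluster must induce a clique, since adding a missing chord inside a cluster only shortens intra-cluster distances and, in an optimal configuration, should not decrease $\rho_{\NDL}$; and (iii) show the two cliques are joined by a single induced path rather than by a longer chain of cliques or a bushier connector, again via a local edge modification. Steps (ii) and (iii) are the crux: unlike $A$ or $\cL$, the matrix $\NDL$ is built from the entire distance matrix and from all the transmissions, so inserting or deleting a single edge changes every entry of $\NDL$ at once, and there is no off-the-shelf interlacing or edge-monotonicity principle available for $\rho_{\NDL}$. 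Supplying such an edge-perturbation calculus --- a usable estimate for how $\partial^{\cL}_n$ responds when a chord is added to a clique-path --- is the step I expect to be genuinely hard, and is presumably why the statement is posed only as a conjecture.
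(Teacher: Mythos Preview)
The paper does not prove this statement at all: it is posed purely as a conjecture, supported only by the numerical data in Tables~\ref{tab:MaxRho} and~\ref{tab:LargeRho}. Your proposal therefore goes well beyond what the paper attempts.

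Your Rayleigh-quotient argument for the first claim is correct and, with the computations you sketch, actually constitutes a rigorous proof that $\max_{G}\rho_{\NDL}(G)\to 2$ as $n\to\infty$. The identity
\[
\rho_{\NDL}(G)=\max_{\by\neq 0}\ \frac{\sum_{i<j}d(v_i,v_j)(y_i-y_j)^2}{\sum_{i<j}d(v_i,v_j)(y_i^2+y_j^2)}
\]
is exactly right, and for $KPK_{a,b,a}$ with your $\pm 1/0$ test vector the cross-clique pairs contribute $4(b+1)a^2+O(a)$ to the numerator and $2(b+1)a^2+O(a)$ to the denominator, the within-clique pairs contribute $0$ and $2a^2+O(a)$ respectively, and the clique-to-interior pairs contribute $O(ab^2)$ to both. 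Hence the quotient is $\tfrac{2(b+1)}{b+2}+O(b/a)$, and choosing $b=b_n\to\infty$ with $b_n=o(n)$ forces this lower bound to $2$. Combined with the upper bound $\rho_{\NDL}<2$ from the corollary, this settles the limiting value and shows it is approached within the $KPK$ family --- a genuine advance over the paper, which offers only computational evidence for this half of the conjecture.

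Your assessment of the second claim is accurate and matches the paper's stance: nothing in the paper (or elsewhere) shows that the maximizer must lie in the $KPK$ family. You correctly identify the absence of any edge-monotonicity or local perturbation principle for $\rho_{\NDL}$ as the real obstruction; your steps (ii) and (iii) are reasonable heuristics but not proofs, so this half of the conjecture remains open in both the paper and your proposal.
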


This family also shows that while $\rho_{\DL}$ is subgraph monotonically increasing (see \cite[Theorem 3.5]{AH13}), $\rho_{\NDL}$ is not. Specifically, we can see that $P_n$ is a subgraph of $KPK_{n_1,n_2,n_3}$ for all $n_1+n_2+n_3=n+2$. However, it has been verified using {\em Sage} \cite{Sage1} for $n\leq 20$ that $\rho_{\NDL}(P_n)< \rho_{\NDL}(KPK_{n_1,n_2,n_3})$ for some $n_1+n_2+n_3=n+2$.

The following result provides a bound for the smallest non-zero eigenvalue and the largest eigenvalue with respect to $\cL$.

\begin{thm}\textnormal{\cite[Lemma 1.7(ii)]{Chu97}}\label{ChuTraceBnds}
 For all graphs $G$ on $n\geq 2$ vertices,
 \[\mu_2\leq \frac{n}{n-1} \] with equality holding if and only if $G$ is $K_n$. Also, \[\rho=\mu_n\geq \frac{n}{n-1} .\]
 \end{thm}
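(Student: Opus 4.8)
The plan is to run everything off the trace. Every diagonal entry of $\cL(G)$ equals $1$, so $\sum_{i=1}^{n}\mu_i=\trace(\cL(G))=n$, and by Theorem~\ref{ChuBnds} connectedness gives $\mu_1=0$, whence $\sum_{i=2}^{n}\mu_i=n$. This is a sum of $n-1$ real numbers whose minimum is $\mu_2$ and whose maximum is $\mu_n$; comparing each of $\mu_2$ and $\mu_n$ with the average $\frac{n}{n-1}$ immediately yields $\mu_2\le\frac{n}{n-1}\le\mu_n=\rho$. That settles both inequalities, so the only real content is the equality case $\mu_2=\frac{n}{n-1}$.

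For that case I would argue as follows. If $\mu_2$ equals the average of $\mu_2,\dots,\mu_n$, then all $n-1$ of these eigenvalues equal $\frac{n}{n-1}$, so $\cL(G)$ has exactly the two distinct eigenvalues $0$ and $c:=\frac{n}{n-1}$; hence its minimal polynomial is $x(x-c)$ and $\cL(G)^2=c\,\cL(G)$. Writing $\cL(G)=I-N$ with $N=D(G)^{-1/2}A(G)D(G)^{-1/2}$, which has nonnegative entries and zero diagonal, expanding $(I-N)^2=c(I-N)$ gives $N^2=\frac{1}{n-1}I+\frac{n-2}{n-1}N$. Reading off the $(i,j)$ entry for a non-adjacent pair $v_i,v_j$ (so $N_{ij}=0$) gives $\sum_{k}N_{ik}N_{kj}=0$; since every summand is nonnegative, each is zero, so $v_i$ and $v_j$ have no common neighbor.

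I then finish with a short combinatorial observation: in a connected graph that is not complete there is a pair of vertices at distance exactly $2$ — take a shortest path between any two non-adjacent vertices and look at its first three vertices — and that pair has a common neighbor, contradicting the previous step. Hence $G=K_n$. Conversely, $\cL(K_n)=\frac{n}{n-1}\!\left(I-\tfrac1n J\right)$ has spectrum $\{0,c^{(n-1)}\}$, so $\mu_2=\frac{n}{n-1}$ for $K_n$, giving the ``if'' direction and completing the characterization.

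The main obstacle is just the equality case, and even that is routine once one notices that $\cL(G)$ then has only two distinct eigenvalues: the identity $\cL(G)^2=c\,\cL(G)$ combined with the nonnegativity of the entries of $N$ forces the non-edge structure, and a one-line shortest-path argument converts that into $G=K_n$. (Alternatively one could cite the known classification of connected graphs whose normalized Laplacian has exactly two distinct eigenvalues, but the self-contained argument above is no longer.)
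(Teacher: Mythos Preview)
Your proof is correct, and the inequality part is exactly the trace argument the paper (following Chung) uses for the analogous $\NDL$ result in Theorem~\ref{minSpecRadius}: $\sum_{i=2}^n\mu_i=\trace(\cL)=n$ forces $\mu_2\le\frac{n}{n-1}\le\mu_n$. For the equality case, the paper does not reproduce Chung's argument but remarks that it ``relies on $\cL$ having 0 entries corresponding to non-adjacencies''---and that is precisely the mechanism you exploit: from $\cL^2=c\,\cL$ you read off $(N^2)_{ij}=0$ at each non-edge, and the nonnegativity of $N$ then kills common neighbors. So your approach is in the same spirit as the cited proof; your execution via the minimal-polynomial identity and the distance-$2$ shortest-path observation is a clean, self-contained packaging of that idea.
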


The proof of this result can be used to prove nearly the same result for $\NDL$. Note the proof of equality of the first inequality if and only if $G$ is $K_n$ could not be generalized, since the proof relies on $\cL$ having 0 entries corresponding to non-adjacencies.

\begin{thm}\label{minSpecRadius}
 For a graph $G$ on $n\geq 2$ vertices, \[\partial^{\cL}_2\leq \frac{n}{n-1} \, \text{ and }\, \rho_{\NDL}=\partial^{\cL}_n\geq \frac{n}{n-1}.\] 
 \end{thm}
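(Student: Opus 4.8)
The plan is to reproduce Chung's trace argument (from the proof of Theorem~\ref{ChuTraceBnds}), which transfers verbatim to $\NDL$. The only input needed is that all $n-1$ nonzero eigenvalues of $\NDL(G)$ sum to $n$, together with the fact that they are all nonnegative, which was established in the corollary following Theorem~\ref{ChuBnds}.

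First I would observe that, directly from the definition, every diagonal entry of $\NDL(G)$ equals $1$, so $\trace(\NDL(G)) = n$; since the trace equals the sum of the eigenvalues, $\sum_{i=1}^{n}\partial^{\cL}_i = n$. By the corollary following Theorem~\ref{ChuBnds}, $\partial^{\cL}_1 = 0$ and every $\partial^{\cL}_i \ge 0$, so $\sum_{i=2}^{n}\partial^{\cL}_i = n$. This is a sum of $n-1$ real numbers with average $\tfrac{n}{n-1}$. Since $\partial^{\cL}_2$ is the smallest of these numbers, it is at most their average, giving $\partial^{\cL}_2 \le \tfrac{n}{n-1}$; since $\partial^{\cL}_n$ is the largest, it is at least their average, giving $\partial^{\cL}_n \ge \tfrac{n}{n-1}$. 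Finally, because $\NDL(G)$ is positive semi-definite, $\rho_{\NDL} = \partial^{\cL}_n$, which yields the stated lower bound on the spectral radius.

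There is essentially no obstacle in this argument; the one point worth flagging—and the reason the equality case $G = K_n$ of Theorem~\ref{ChuTraceBnds} is omitted from the statement—is that Chung's finer analysis of when $\mu_2 = \tfrac{n}{n-1}$ relies on $\cL$ having zero off-diagonal entries precisely on non-adjacent pairs, whereas for a connected graph on $n \ge 2$ vertices every off-diagonal entry of $\NDL(G)$ is strictly negative. Hence that portion of the argument cannot be carried over, and only the two inequalities are claimed.
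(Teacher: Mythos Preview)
Your argument is correct and is essentially identical to the paper's: both compute $\trace(\NDL)=n$, use $\partial^{\cL}_1=0$ to get $\sum_{i=2}^{n}\partial^{\cL}_i=n$, and then bound the smallest and largest of these $n-1$ nonnegative numbers by their average $\tfrac{n}{n-1}$. Your closing remark about why the $K_n$ equality case does not carry over also matches the paper's explanation.
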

\bpf
Observe $\sum\limits_{i=1}^n \partial^{\cL}_i=\sum\limits_{i=2}^n \partial^{\cL}_i=\trace(\NDL)=n$. 
Then since $\partial^{\cL}_2$ is the smallest non-zero eigenvalue, $\partial^{\cL}_2(n-1)\leq \sum\limits_{i=2}^n \partial^{\cL}_i= n$ so $\partial^{\cL}_2\leq \frac{n}{n-1}$. 
Similarly, since $\partial^{\cL}_n$ is the largest eigenvalue $\partial^{\cL}_n(n-1)\geq \sum\limits_{i=2}^n \partial^{\cL}_i= n$ so $\partial^{\cL}_n\geq \frac{n}{n-1}$.
\epf

We can see that Theorem \ref{minSpecRadius} provides a lower bound on the spectral radius of $\NDL$. As previously computed, this is the spectral radius of the complete graph, so this minimum is achieved by $K_n$. In fact, we can prove the following stronger statement.

\begin{thm}\label{minSpec}
If any graph $G$ has $\NDL$ spectral radius $\frac{n}{n-1}$, $\spec_{\NDL}(G)=\{0,\frac{n}{n-1}^{(n-1)}\}$.
\end{thm}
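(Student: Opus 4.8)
The plan is to combine the trace identity used in the proof of Theorem~\ref{minSpecRadius} with positive semi-definiteness, and to observe that once the extreme eigenvalue is pinned down, a one-line averaging argument forces all the others. First I would recall two facts already available in the excerpt: since $\NDL = T(G)^{-1/2}\DL(G)T(G)^{-1/2}$ is congruent to the positive semi-definite matrix $\DL(G)$, it is itself positive semi-definite, so every eigenvalue is non-negative and $\rho_{\NDL} = \partial^{\cL}_n$; and by the corollary following Theorem~\ref{ChuBnds}, $\partial^{\cL}_1 = 0$ while $\partial^{\cL}_2,\dots,\partial^{\cL}_n > 0$ (the zero eigenvalue is simple because $G$ is connected).

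Next I would reuse the computation from the proof of Theorem~\ref{minSpecRadius}: because the diagonal entries of $\NDL$ are all equal to $1$ and $\partial^{\cL}_1 = 0$,
\[
\sum_{i=2}^{n}\partial^{\cL}_i = \trace(\NDL(G)) = n .
\]
Now assume $\rho_{\NDL} = \partial^{\cL}_n = \tfrac{n}{n-1}$. Since $\partial^{\cL}_n$ is the largest eigenvalue, $\partial^{\cL}_i \le \tfrac{n}{n-1}$ for each of the $n-1$ indices $i = 2,\dots,n$, and hence
\[
n = \sum_{i=2}^{n}\partial^{\cL}_i \le (n-1)\cdot\frac{n}{n-1} = n .
\]
Equality throughout is only possible if $\partial^{\cL}_i = \tfrac{n}{n-1}$ for every $i = 2,\dots,n$. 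Together with $\partial^{\cL}_1 = 0$ this yields $\spec_{\NDL}(G) = \{0, (\tfrac{n}{n-1})^{(n-1)}\}$, as claimed.

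I do not expect a real obstacle here; the content is entirely the equality case of the inequality $\sum_{i\ge2}\partial^{\cL}_i \le (n-1)\partial^{\cL}_n$. The only points requiring care are the two structural facts invoked at the start — that $\rho_{\NDL}$ is the \emph{largest} eigenvalue (positive semi-definiteness) and that the zero eigenvalue is simple so that exactly $n-1$ eigenvalues sum to $n$ — and both are already established earlier in the section, so no new work is needed for them.
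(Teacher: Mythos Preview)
Your argument is correct and is essentially the same as the paper's: both use the trace identity $\sum_{i=2}^n \partial^{\cL}_i = n$ together with the bound $\partial^{\cL}_i \le \partial^{\cL}_n = \tfrac{n}{n-1}$ and then observe that equality in the resulting inequality forces every nonzero eigenvalue to equal $\tfrac{n}{n-1}$. The only cosmetic difference is that the paper subtracts off $\partial^{\cL}_n$ and argues about $\sum_{i=2}^{n-1}\partial^{\cL}_i$, whereas you keep all $n-1$ terms together; the content is identical.
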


\bpf
For a graph $G$, let $\rho_{\NDL}=\partial^{\cL}_n= \frac{n}{n-1}$. Recall $\partial^{\cL}_1=0$ for all graphs and obviously $\partial^{\cL}_2\leq \dots\leq \partial^{\cL}_{n-1}\leq \frac{n}{n-1}$ by definition. As in the proof of the previous theorem, we have $\sum\limits_{i=2}^n \partial^{\cL}_i=\trace(\NDL)=n$ so $\sum\limits_{i=2}^{n-1} \partial^{\cL}_i=n-\frac{n}{n-1}=\frac{n(n-2)}{n-1}$. If $\partial^{\cL}_i<\frac{n}{n-1}$ for some $2\leq i\leq n-1$, $\sum\limits_{i=2}^{n-1} \partial^{\cL}_i< \frac{n(n-2)}{n-1}$. So $\partial^{\cL}_i=\frac{n}{n-1}$ for all $2\leq i\leq n$.
\epf

Theorem \ref{minSpec} shows that any other graph achieving minimal spectral radius would be $\NDL$-cospectral to the complete graph $K_n$. The next conjecture would follow if it was shown that $K_n$ has no $\NDL$-cospectral mates. Using {\em Sage} \cite{Sage1}, we can verify that $K_n$ is the only graph achieving minimum $\rho_{\NDL}$ for $n\leq 20$.

\begin{conj}
For a graph on $n$ vertices, $$\rho_{\NDL}=\partial^{\cL}_n= \frac{n}{n-1}$$ if and only if $G$ is the complete graph $K_n$, and so $K_n$ is the only graph achieving minimum spectral radius with respect to $\NDL$.
\end{conj}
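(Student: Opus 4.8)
To establish the conjecture it suffices to prove the stronger fact that $K_n$ has no $\NDL$-cospectral mate; indeed, by Theorem~\ref{minSpec} any graph with $\rho_{\NDL}=\tfrac{n}{n-1}$ is $\NDL$-cospectral with $K_n$. One direction is immediate: as computed above $\spec(\NDL(K_n))=\left\{0,\left(\tfrac{n}{n-1}\right)^{(n-1)}\right\}$, so $\rho_{\NDL}(K_n)=\tfrac{n}{n-1}$. For the converse, let $G$ have $n\geq 2$ vertices and $\rho_{\NDL}(G)=\tfrac{n}{n-1}$. By Theorem~\ref{minSpec}, $\NDL(G)$ has exactly two distinct eigenvalues: $0$ with multiplicity $1$ and $\tfrac{n}{n-1}$ with multiplicity $n-1$.

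The key observation is that a symmetric matrix with only two eigenvalues is completely determined by the projection onto one of its eigenspaces. First I would identify the $0$-eigenspace: the $i$-th entry of $\D(G)\mathbf{1}$ equals $\sum_{j}d(v_i,v_j)=\tr(v_i)$, so $\D(G)\mathbf{1}=T(G)\mathbf{1}$ and hence $\NDL(G)\,T(G)^{1/2}\mathbf{1}=T(G)^{1/2}\mathbf{1}-T(G)^{-1/2}\D(G)\mathbf{1}=\mathbf{0}$. Since this eigenspace is one-dimensional, it is spanned by $u:=T(G)^{1/2}\mathbf{1}$, whose $i$-th coordinate is $\sqrt{\tr(v_i)}$. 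With $\sigma:=u^Tu=\sum_i\tr(v_i)$ and $P:=\tfrac1\sigma uu^T$ the orthogonal projection onto $\operatorname{span}(u)$, the spectral theorem yields $\NDL(G)=\tfrac{n}{n-1}(I-P)$, i.e.
\[(\NDL(G))_{ij}=\frac{n}{n-1}\left(\delta_{ij}-\frac{\sqrt{\tr(v_i)\tr(v_j)}}{\sigma}\right)\qquad\text{for all }i,j.\]

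Next I would match this against the definition of $\NDL(G)$ entry by entry. On the diagonal, $(\NDL(G))_{ii}=1$ forces $\tfrac{\tr(v_i)}{\sigma}=\tfrac1n$ for every $i$, so $G$ is $k$-transmission regular with $k=\sigma/n$. Then $\sqrt{\tr(v_i)\tr(v_j)}=k$ for all $i,j$, and comparing off-diagonal entries gives $-\tfrac{d(v_i,v_j)}{k}=(\NDL(G))_{ij}=-\tfrac{1}{n-1}$, so $d(v_i,v_j)=\tfrac{k}{n-1}$ for every $i\neq j$; that is, all pairwise distances in $G$ are equal. Since $G$ is connected with $n\geq2$ vertices it contains an edge, so this common distance is $1$, every pair of vertices is adjacent, and $G=K_n$. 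The final clause of the conjecture then follows from Theorem~\ref{minSpecRadius}, which gives $\rho_{\NDL}\geq\tfrac{n}{n-1}$ for all graphs on $n$ vertices, so $K_n$ attains the minimum.

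I do not expect a substantive obstacle: the whole argument reduces to a comparison of matrix entries once the two-eigenvalue structure is in hand. The two points requiring care are that this structure should be deduced from Theorem~\ref{minSpec} rather than assumed, and that $u=T(G)^{1/2}\mathbf{1}$ spans the \emph{full} $0$-eigenspace --- guaranteed here because the $0$-eigenvalue of $\NDL(G)$ is simple. (The case $n=2$, where $K_2$ is the only connected graph, is handled trivially by the same computation.)
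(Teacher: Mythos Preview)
The paper does not prove this statement: it is stated as a \emph{conjecture}, supported only by computational verification for $n\le 20$, with the remark that it would follow if $K_n$ were shown to have no $\NDL$-cospectral mate. Your argument actually supplies a proof, and it is correct.

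The key step---writing $\NDL(G)=\tfrac{n}{n-1}(I-P)$ with $P=\tfrac{1}{\sigma}uu^T$ once the spectrum is known to be $\{0,(\tfrac{n}{n-1})^{(n-1)}\}$---is sound: simplicity of the eigenvalue $0$ (from the corollary to Theorem~\ref{ChuBnds}) guarantees that $u=T(G)^{1/2}\mathbf{1}$ spans the full $0$-eigenspace, so the spectral decomposition is exactly as you wrote. The diagonal comparison forcing transmission regularity, followed by the off-diagonal comparison forcing all distances to equal $k/(n-1)$, is a clean entrywise argument. The final step (common distance equals $1$ because $G$ is connected and distances are positive integers) is valid. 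So your proof resolves what the paper left open, by a route the paper did not pursue: the author observed only that a counterexample would have to be $\NDL$-cospectral with $K_n$, whereas you exploit the two-eigenvalue structure directly to pin down the matrix and hence the graph.
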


We may also bound the eigenvalues of one matrix in terms of the other. Butler described a relationship between the eigenvalues of $L$ and $\cL$. The next result appears with proof in \cite{But08} and is stated without proof for weighted graphs in \cite{BC}; one can verify the proof from \cite{But08} remains valid for weighted graphs.

 \begin{thm}{\em \cite[Theorem 4]{But08}}\label{maxdegBut}
Let $G$ be a weighted graph with $\Delta$ the maximum degree of a vertex in $G$ and $\delta$ the minimum degree of a vertex in $G$. Then for $1\leq i\leq n$,
\[\frac{1}{\Delta}\phi_i\leq \mu_i \leq \frac{1}{\delta}\phi_i \]
 \end{thm}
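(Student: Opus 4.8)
The plan is to derive both inequalities simultaneously from the Courant--Fischer min--max characterization of eigenvalues, using the identity $\cL(G) = D^{-1/2} L(G) D^{-1/2}$ to pass between the two matrices. Recall that since $G$ has no isolated vertices the degree matrix $D = \diag(\deg v_1, \dots, \deg v_n)$ is invertible, with all diagonal entries lying in the interval $[\delta, \Delta]$.

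First I would write, for any nonzero $x \in \R^n$, the Rayleigh quotient of $\cL$ as $\frac{x^T \cL x}{x^T x} = \frac{x^T D^{-1/2} L D^{-1/2} x}{x^T x}$ and substitute $y = D^{-1/2} x$, turning the right-hand side into $\frac{y^T L y}{y^T D y}$. Since $D$ is diagonal with positive entries bounded below by $\delta$ and above by $\Delta$, we have $\delta\, y^T y \le y^T D y \le \Delta\, y^T y$ for every $y$, and since $L$ is positive semi-definite, $y^T L y \ge 0$. Dividing through yields the pointwise sandwich
\[ \frac{1}{\Delta}\cdot\frac{y^T L y}{y^T y} \;\le\; \frac{y^T L y}{y^T D y} \;\le\; \frac{1}{\delta}\cdot\frac{y^T L y}{y^T y}. \]

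Finally I would invoke Courant--Fischer: $\mu_i = \min_{\dim S = i}\ \max_{0 \neq x \in S} \frac{x^T \cL x}{x^T x}$. Because $x \mapsto D^{-1/2} x$ is an invertible linear map, it carries the collection of all $i$-dimensional subspaces bijectively onto itself, so this quantity equals $\min_{\dim S = i}\ \max_{0 \neq y \in S} \frac{y^T L y}{y^T D y}$. Substituting the two-sided bound above and recognizing $\phi_i = \min_{\dim S = i}\ \max_{0 \neq y \in S} \frac{y^T L y}{y^T y}$ (the $i$-th smallest eigenvalue of $L$) gives $\frac{1}{\Delta}\phi_i \le \mu_i \le \frac{1}{\delta}\phi_i$. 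There is no genuine obstacle here beyond bookkeeping; the two points that need care are checking that the substitution $y = D^{-1/2}x$ really does permute the family of $i$-dimensional subspaces (so that the min--max transfers with no loss), and that the inequalities $\delta\, y^T y \le y^T D y \le \Delta\, y^T y$ hold entrywise precisely because every degree is a positive number in $[\delta,\Delta]$.
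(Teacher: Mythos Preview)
Your argument is correct and is essentially the standard proof: the paper does not give its own proof of this result but cites Butler's dissertation, where the argument proceeds exactly via Courant--Fischer applied to the generalized Rayleigh quotient $\frac{y^T L y}{y^T D y}$ together with the bounds $\delta\, y^T y \le y^T D y \le \Delta\, y^T y$. Your handling of the subspace bijection under $D^{-1/2}$ and the pointwise sandwich is fine.
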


Since the above result holds for weighted graphs, we can again apply the result to the weighted complete graph $W(G)$ to obtain a similar result for $\NDL$. Note the distance Laplacian of $G$ is the combinatorial Laplacian of the weighted complete graph $W(G)$, so $\phi_i(W(G))=\partial^L_i(G).$

 \begin{cor}\label{MaxMinTransBds}
Let $G$ be a graph with $\tM$ the maximum transmission of a vertex in $G$ and $\tm$ the minimum transmission of a vertex in $G$. Then for $1\leq i\leq n$,
\[\frac{1}{\tM}\partial^{L}_i\leq \partial^{\cL}_i \leq \frac{1}{\tm}\partial^{L}_i \]
 \end{cor}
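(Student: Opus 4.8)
The plan is to reduce the statement to Butler's bound (Theorem~\ref{maxdegBut}) applied to the weighted complete graph $W(G)$, exactly in the spirit of the preceding corollary. First I would set up the dictionary between $G$ and $W(G)$: here $W(G)$ is the complete graph on $V(G)$ with edge weight $w_{W(G)}(v_i,v_j)=d_G(v_i,v_j)$, which is a positive real for every pair since $G$ is connected, so $W(G)$ is a legitimate weighted graph with no isolated vertices (and it is connected, being complete). Under this correspondence $A(W(G))=\D(G)$, the degree of $v_i$ in $W(G)$ equals $\tr_G(v_i)$ so that $D(W(G))=T(G)$, and hence $L(W(G))=D(W(G))-A(W(G))=T(G)-\D(G)=\DL(G)$ and $\cL(W(G))=I-D(W(G))^{-1/2}A(W(G))D(W(G))^{-1/2}=\NDL(G)$.

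Next I would translate the relevant quantities through this identification. The combinatorial Laplacian eigenvalues of $W(G)$ are the distance Laplacian eigenvalues of $G$, i.e.\ $\phi_i(W(G))=\partial^L_i(G)$; the normalized Laplacian eigenvalues of $W(G)$ are the normalized distance Laplacian eigenvalues of $G$, i.e.\ $\mu_i(W(G))=\partial^{\cL}_i(G)$; the maximum degree of $W(G)$ is $\tM$ and the minimum degree of $W(G)$ is $\tm$. Since Theorem~\ref{maxdegBut} holds for weighted graphs, applying it to $W(G)$ gives $\tfrac{1}{\tM}\phi_i(W(G))\le\mu_i(W(G))\le\tfrac{1}{\tm}\phi_i(W(G))$ for $1\le i\le n$, and substituting the translated quantities yields $\tfrac{1}{\tM}\partial^{L}_i\le\partial^{\cL}_i\le\tfrac{1}{\tm}\partial^{L}_i$, which is exactly the claim.

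There is no substantive obstacle: the entire content is the observation (already recorded in the excerpt just before Theorem~\ref{ChuBnds}) that $\NDL(G)=\cL(W(G))$, together with the fact that Butler's theorem is valid for weighted graphs. The only points requiring a moment's care are verifying that $W(G)$ meets the hypotheses of Theorem~\ref{maxdegBut} (a connected weighted graph with positive edge weights and no isolated vertices, which holds because $G$ is connected on $n\ge 2$ vertices so all distances are positive) and noting that both $\phi_i$ and $\mu_i$ are indexed in increasing order, so the inequality is obtained index-by-index with no reindexing needed.
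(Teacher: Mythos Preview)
Your proposal is correct and follows exactly the paper's approach: apply Theorem~\ref{maxdegBut} to the weighted complete graph $W(G)$, using the identification $L(W(G))=\DL(G)$, $\cL(W(G))=\NDL(G)$, and $\Delta(W(G))=\tM$, $\delta(W(G))=\tm$.
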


\section{Using twin vertices to determine eigenvalues}\label{sec:EigenDet}

A pair of vertices $u$ and $v$ in $G$ are called {\em twins} if they have the same neighborhood, and the same edge weights in the case of a weighted graph. If $uv$ is an edge in $G$, they are called {\em adjacent twins} and if $uv$ is not an edge in $G$, they are called {\em non-adjacent twins}. Twins have proved very useful in the study of spectra. In this section, we show how twin vertices can be used to compute eigenvalues of $\NDL$ and apply these results to compute the spectra for several families of matrices.

\begin{thm}\label{twins}{\em \cite{BC}}
If a weighted graph $G$ has a set of two or more nonadjacent twins, then 1 is an eigenvalue of $\cL(G)$ and 0 is an eigenvalue of $A(G)$. If a weighted graph $G$ has a set of two or more adjacent twins of degree $d$, then $\frac{d+1}{d}$ is an eigenvalue of $\cL(G)$ and $-1$ is an eigenvalue of $A(G)$.
\end{thm}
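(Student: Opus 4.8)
The plan is to exhibit an explicit eigenvector, namely the difference $\mathbf{x}=\mathbf{e}_u-\mathbf{e}_v$ of the two standard basis vectors indexed by a pair $u,v$ drawn from the given set of twins. The hypothesis that there are two or more twins guarantees such a pair exists, so $\mathbf{x}\neq\mathbf{0}$; and since $A(G)$ and $\cL(G)$ are real symmetric, producing a single eigenvector with a prescribed eigenvalue suffices. The whole argument is then a coordinatewise computation of $A(G)\mathbf{x}$.

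First I would handle the adjacency matrix. For a coordinate $w\notin\{u,v\}$, the $w$-entry of $A(G)\mathbf{x}$ is $(A(G))_{wu}-(A(G))_{wv}=w(v_w,v_u)-w(v_w,v_v)$, which is $0$ because twins have identical incident edge weights outside $\{u,v\}$. In the nonadjacent case the $u$- and $v$-entries are $(A)_{uu}-(A)_{uv}=0-0=0$ and $(A)_{vu}-(A)_{vv}=0-0=0$ (simplicity kills the diagonal entries, nonadjacency kills the $uv$-entry), so $A(G)\mathbf{x}=\mathbf{0}$ and $0\in\spec(A(G))$. In the adjacent case those two entries are $-w(v_u,v_v)$ and $w(v_u,v_v)$; taking this weight to be $1$ as in the statement (automatic for an unweighted graph), we get $A(G)\mathbf{x}=-\mathbf{x}$ and $-1\in\spec(A(G))$.

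Next I would pass to $\cL(G)=I-D(G)^{-1/2}A(G)D(G)^{-1/2}$. The key observation is that twins have equal degree: writing $d=\deg(u)=\deg(v)$ (their incident edge weights agree, and for adjacent twins both sums include the common edge $uv$), we get $D(G)^{-1/2}\mathbf{x}=d^{-1/2}\mathbf{x}$, hence $D(G)^{-1/2}A(G)D(G)^{-1/2}\mathbf{x}=d^{-1}A(G)\mathbf{x}$. In the nonadjacent case this is $\mathbf{0}$, so $\cL(G)\mathbf{x}=\mathbf{x}$ and $1\in\spec(\cL(G))$; in the adjacent case it equals $-d^{-1}\mathbf{x}$, so $\cL(G)\mathbf{x}=(1+d^{-1})\mathbf{x}=\frac{d+1}{d}\mathbf{x}$ and $\frac{d+1}{d}\in\spec(\cL(G))$.

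There is no genuine obstacle here; the result is a direct verification. The only point demanding care is the bookkeeping of the two ``exceptional'' coordinates $u$ and $v$ — the diagonal entries and the $uv$-entry behave differently from all the others — together with confirming that the twin condition forces $\deg(u)=\deg(v)$, so that $D(G)^{-1/2}$ acts as a scalar on $\mathbf{x}$. Once those two items are pinned down, each claimed eigenvalue falls out immediately.
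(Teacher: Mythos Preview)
Your proposal is correct and follows exactly the standard approach: exhibit the eigenvector $\mathbf{e}_u-\mathbf{e}_v$ and verify the action of $A$ and $\cL$ on it coordinatewise. The paper does not give its own proof of this theorem (it is cited from \cite{BC}), but the proof it supplies for the analogous normalized-distance-Laplacian result (Theorem~\ref{nonadjtwins}) uses precisely this eigenvector $\mathbf{x}=[1,-1,0,\dots,0]^T$, so your argument aligns with the intended method.
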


Applying part of this result to $W(G)$ gives the analogous result for adjacent twins. In the weighted complete graph $W(G)$, $v_1$ and $v_2$ are adjacent twins with degree $k$. Then by Theorem \ref{twins}, $\frac{k+1}{k}$ is an eigenvalue of $\cL(W(G))$ and thus of $\NDL(G)$.

\begin{cor}\label{adjtwins}
Let $G$ be a graph with $v_1,v_2\in V(G)$ such that $v_1$ and $v_2$ are adjacent twins and $\tr(v_1)=\tr(v_2)=k$. Then $\frac{k+1}{k}$ is an eigenvalue of $\NDL(G)$. 
\end{cor}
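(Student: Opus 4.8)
The plan is to reduce to the adjacent-twins half of Theorem~\ref{twins} via the identification, already noted in the text, of $\NDL(G)$ with $\cL(W(G))$, where $W(G)$ is the weighted complete graph on $V(G)$ with edge weights $w_{W(G)}(v_i,v_j)=d_G(v_i,v_j)$; under this identification $\deg_{W(G)}(v_i)=\tr_G(v_i)$. Granting this, it suffices to check that $v_1$ and $v_2$ are adjacent twins of degree $k$ in $W(G)$, and then invoke Theorem~\ref{twins} for $W(G)$.

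The one substantive point is that being adjacent twins in $G$ forces $v_1$ and $v_2$ to be twins in $W(G)$; concretely, this means $w_{W(G)}(v_1,u)=w_{W(G)}(v_2,u)$ for every $u\neq v_1,v_2$, i.e.\ $d_G(v_1,u)=d_G(v_2,u)$ for all such $u$. I would prove this with a short path-surgery argument. Adjacent twins satisfy $N_G[v_1]=N_G[v_2]$, so given a geodesic from $v_1$ to $u$, its second vertex is either $v_2$ (delete $v_1$ to get a shorter $v_2$--$u$ walk) or a vertex of $N_G(v_1)\setminus\{v_2\}=N_G(v_2)\setminus\{v_1\}$ (prepend the edge from $v_2$ to that vertex to get a $v_2$--$u$ walk of the same length); either way $d_G(v_2,u)\le d_G(v_1,u)$, and the symmetric argument gives the reverse inequality, so the distances agree. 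This incidentally forces $\tr_G(v_1)=\tr_G(v_2)$, so the hypothesis $\tr(v_1)=\tr(v_2)=k$ is really just naming the common value.

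It then follows that in $W(G)$, which is complete, $v_1$ and $v_2$ are adjacent (the edge $v_1v_2$ has positive weight $d_G(v_1,v_2)=1$) and are twins, with common degree $\deg_{W(G)}(v_1)=\sum_{u\neq v_1}d_G(v_1,u)=\tr_G(v_1)=k$. Applying the adjacent-twins part of Theorem~\ref{twins} to $W(G)$ shows that $\frac{k+1}{k}$ is an eigenvalue of $\cL(W(G))=\NDL(G)$, as claimed.

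The main obstacle is the path-surgery lemma establishing that adjacent twins of $G$ are equidistant from every other vertex; everything else is bookkeeping through the $\NDL(G)=\cL(W(G))$ dictionary. Two small points deserve care: the argument must use closed neighborhoods $N_G[v_1]=N_G[v_2]$ (the intended reading of ``adjacent twins,'' since open neighborhoods cannot coincide for an edge), and one must confirm that the edge $v_1v_2$ is genuinely present in $W(G)$ so that the adjacent-twins case of Theorem~\ref{twins}, rather than the nonadjacent case, is the one that applies.
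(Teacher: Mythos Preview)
Your proof is correct and follows the same route as the paper: pass to the weighted complete graph $W(G)$, observe that $v_1,v_2$ become adjacent twins of degree $k$ there, and invoke the adjacent-twins half of Theorem~\ref{twins} to get $\frac{k+1}{k}\in\spec(\cL(W(G)))=\spec(\NDL(G))$. Your path-surgery argument that $d_G(v_1,u)=d_G(v_2,u)$ for $u\neq v_1,v_2$ fills in a step the paper leaves implicit, and your attention to closed neighborhoods and to the presence of the edge $v_1v_2$ in $W(G)$ is exactly the care the reduction requires.
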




Theorem \ref{twins} cannot be used to prove anything for non-adjacent twins, since all vertices are adjacent in the weighted complete graph. However, the proof of the following result adapts the method used to prove Theorem \ref{twins} to the normalized distance Laplacian. 

\begin{thm}\label{nonadjtwins}
Let $G$ be a graph with $v_1,v_2\in V(G)$ such that $v_1$ and $v_2$ are non-adjacent twins and $\tr(v_1)=\tr(v_2)=k$. Then $\frac{k+2}{k}$ is an eigenvalue of $\NDL(G)$ with eigenvector $\mathbf{x}=[1,-1,0,\dots,0]^T$.
\end{thm}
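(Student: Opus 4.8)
The plan is to verify directly that $\mathbf{x}=[1,-1,0,\dots,0]^T$ is an eigenvector of $\NDL(G)$ for the claimed eigenvalue, by computing $\NDL(G)\mathbf{x}$ one coordinate at a time. Before doing the computation, I would first pin down two distance facts that follow from the twin hypothesis. Since $G$ is connected and a pair of non-adjacent twins can only occur when $n\geq 3$, the common neighborhood $N(v_1)=N(v_2)$ is nonempty; any $u$ in it is adjacent to both $v_1$ and $v_2$ while $v_1v_2\notin E(G)$, so $d(v_1,v_2)=2$. Likewise, for every vertex $w\notin\{v_1,v_2\}$ one has $d(v_1,w)=d(v_2,w)$: a shortest $v_1$–$w$ path starts with an edge to some $u\in N(v_1)=N(v_2)$, which yields a $v_2$–$w$ walk of the same length, and symmetrically, so the two distances coincide.

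With these in hand, the computation is routine. Recalling $\tr(v_1)=\tr(v_2)=k$: the first coordinate of $\NDL(G)\mathbf{x}$ is $(\NDL)_{11}-(\NDL)_{12}=1+\frac{d(v_1,v_2)}{k}=1+\frac{2}{k}=\frac{k+2}{k}$; the second coordinate is $(\NDL)_{21}-(\NDL)_{22}=-\frac{2}{k}-1=-\frac{k+2}{k}$; and for $i\geq 3$ the $i$th coordinate is $(\NDL)_{i1}-(\NDL)_{i2}=\frac{-d(v_i,v_1)+d(v_i,v_2)}{\sqrt{\tr(v_i)\,k}}=0$ by the equality of distances just established. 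Hence $\NDL(G)\mathbf{x}=\frac{k+2}{k}\mathbf{x}$, which is exactly the assertion.

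There is essentially no obstacle here beyond bookkeeping; the only point that deserves care is justifying $d(v_1,v_2)=2$ and $d(v_1,w)=d(v_2,w)$ from the definition of non-adjacent twins, together with the standing assumption that $G$ is connected (so that transmissions are finite and the common neighborhood is nonempty). It is also worth remarking that this mirrors the proof technique behind Theorem \ref{twins}: in the ordinary normalized Laplacian a pair of non-adjacent twins contributes a $0$ off-diagonal entry in the relevant rows, whereas here the corresponding entry is $-\tfrac{2}{k}$, and this is precisely what shifts the eigenvalue from $1$ to $\frac{k+2}{k}$.
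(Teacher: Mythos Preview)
Your proof is correct and follows essentially the same approach as the paper: verify directly that $\mathbf{x}=[1,-1,0,\dots,0]^T$ is an eigenvector by computing $\NDL(G)\mathbf{x}$ coordinatewise, using that $d(v_1,v_2)=2$ and $d(v_1,w)=d(v_2,w)$ for all other $w$. In fact you supply a bit more detail than the paper, explicitly justifying these two distance facts from the non-adjacent twin hypothesis rather than leaving them implicit.
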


\bpf
Observe for $i=3,\dots,n$, $\NDL_{1,i}=\NDL_{i,1}=\NDL_{2,i}=\NDL_{i,2}=-\frac{d(v_1,v_i)}{\sqrt{kt(v_i)}}=-\frac{d(v_2,v_i)}{\sqrt{kt(v_i)}}$ so the first and second rows and the first and second columns are the same except for in the $2\times 2$ submatrix indexed by $v_1, v_2$. This submatrix is $\bmt 1& -\frac{2}{k}\\
-\frac{2}{k}&1 \emt.$ Multiplying $\NDL$ by $\mathbf{x}$ gives
\begin{eqnarray*}
    \NDL(G)\mathbf{x}=\bmt 1+\frac{2}{k}\\ -\frac{2}{k}-1 \\ \NDL_{3,1}-\NDL_{3,2}\\ \vdots \\ \NDL_{n,1}-\NDL_{n,2}  \emt=\bmt \frac{k+2}{k}\\ -\frac{k+2}{k} \\ 0\\ \vdots \\ 0  \emt=\frac{k+2}{k}\mathbf{x}.
\end{eqnarray*}
So we see $\frac{k+2}{k}$ is an eigenvalue of $\NDL(G)$ with eigenvector $\mathbf{x}$, as desired. 
\epf

We can now apply Corollary \ref{adjtwins} and Theorem \ref{nonadjtwins} to compute the $\NDL$-spectrum of some well known families.

\begin{thm}
The complete bipartite graph on $n+m$ vertices $K_{m,n}$ has \[\spec(\NDL)=\{0, \left(\frac{2n+m}{2n+m-2}\right)^{(n-1)}, \left(\frac{n+2m}{n+2m-2}\right)^{(m-1)}, \frac{2(n^2+m^2+mn-n-m)}{(2n+m-2)(n+2m-2)}\}\].
\end{thm}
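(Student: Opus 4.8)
The plan is to exploit the rich twin structure of $K_{m,n}$. Label the two sides $A=\{a_1,\dots,a_n\}$ and $B=\{b_1,\dots,b_m\}$. In $K_{m,n}$ any two vertices on the same side are non-adjacent twins, so first I would compute the common transmission on each side. A vertex $a_i\in A$ is at distance $1$ from all $m$ vertices of $B$ and at distance $2$ from the remaining $n-1$ vertices of $A$, giving $\tr(a_i)=m+2(n-1)=2n+m-2$; symmetrically $\tr(b_j)=n+2m-2$. Set $k_A=2n+m-2$ and $k_B=n+2m-2$.

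Next I would apply Theorem~\ref{nonadjtwins} repeatedly. Within side $A$, the vectors $\mathbf{x}$ supported on a pair of $A$-vertices (with entries $1,-1$) are eigenvectors with eigenvalue $\frac{k_A+2}{k_A}=\frac{2n+m}{2n+m-2}$; choosing $n-1$ linearly independent such difference vectors (e.g.\ $e_{a_1}-e_{a_i}$ for $i=2,\dots,n$) gives eigenvalue $\frac{2n+m}{2n+m-2}$ with multiplicity $n-1$. Likewise side $B$ contributes eigenvalue $\frac{n+2m}{n+2m-2}$ with multiplicity $m-1$. Together with $\partial^{\cL}_1=0$ (which holds for every graph by the Corollary following Theorem~\ref{ChuBnds}), that accounts for $1+(n-1)+(m-1)=n+m-1$ eigenvalues, leaving exactly one more to find.

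For the last eigenvalue I would use the trace: $\trace(\NDL)=n+m$, so the remaining eigenvalue is
\[
(n+m)-0-(n-1)\frac{2n+m}{2n+m-2}-(m-1)\frac{n+2m}{n+2m-2}.
\]
I would then simplify this expression and check it equals $\frac{2(n^2+m^2+mn-n-m)}{(2n+m-2)(n+2m-2)}$. As a cross-check (and to make the argument self-contained rather than relying on a raw trace computation), I would note that the orthogonal complement of the span of the found eigenvectors consists of vectors constant on $A$ and constant on $B$, i.e.\ spanned by $T^{1/2}$-weighted indicator-type vectors; one of these directions is the all-transmissions-weighted vector with eigenvalue $0$, and restricting $\NDL$ to this $2$-dimensional invariant subspace yields a $2\times2$ matrix whose nonzero eigenvalue can be read off directly.

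The main obstacle is purely the algebra: verifying that the leftover eigenvalue simplifies to the stated closed form, and making sure the $n-1$ and $m-1$ difference vectors I pick are genuinely linearly independent and mutually orthogonal to the two ``constant-on-each-side'' vectors so that the eigenvalue multiplicities are exactly as claimed (in particular, that $\frac{2n+m}{2n+m-2}$, $\frac{n+2m}{n+2m-2}$, and the last eigenvalue do not secretly coincide, which would merge multiplicities). Handling the small/degenerate cases ($n=1$ or $m=1$, i.e.\ stars) where some multiplicities vanish should be noted but is routine.
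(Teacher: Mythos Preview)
Your proposal is correct and follows essentially the same approach as the paper: use Theorem~\ref{nonadjtwins} on the non-adjacent twins within each partite set to obtain the eigenvalues $\frac{2n+m}{2n+m-2}$ and $\frac{n+2m}{n+2m-2}$ with multiplicities $n-1$ and $m-1$, include the eigenvalue $0$, and recover the final eigenvalue from the trace identity $\sum_i \partial^{\cL}_i=n+m$. Your additional remarks on linear independence of the difference vectors, the $2$-dimensional invariant subspace, and the degenerate star cases go a bit beyond what the paper writes out but do not change the argument.
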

\bpf
Let $A=\{a_1,\dots,a_n\}$ and $B=\{b_1,\dots,b_m\}$ be the partite sets of $K_{n,m}$. Observe every pair of vertices in $A$ are non-adjacent twins with transmission $2(n-1)+m=2n+m-2$. By Theorem \ref{nonadjtwins}, every pair $a_1,a_j$ yields the eigenvalue $\frac{2n+m}{2n+m-2}$ and there are $n-1$ such pairs. Similarly, every pair of vertices in $B$ are non-adjacent twins with transmission $n+2(m-1)=n+2m-2$. By Theorem \ref{nonadjtwins}, every pair $b_1,b_j$ yields the eigenvalue $\frac{n+2m}{n+2m-2}$ and there are $m-1$ such pairs. We also have that 0 is an eigenvalue. We have accounted for $n+m-1$ eigenvalues so only one eigenvalue remains, denote this eigenvalue $\nu$. Observe $\sum\limits_{i=1}^{n+m} \partial^{\cL}_i=\trace(\NDL)=n+m$ so 
\[n+m=0+\frac{2n+m}{2n+m-2}(n-1)+\frac{n+2m}{n+2m-2}(m-1)+\nu. \]

By computation, \[\nu=\frac{2(n^2+m^2+mn-n-m)}{(2n+m-2)(n+2m-2)}.\]

\epf


\begin{cor}
The star graph on $n$ vertices $S_n$ has $\spec(\NDL)=\{0,\frac{2n-2}{2n-3}, \left(\frac{2n-1}{2n-3}\right)^{(n-2)}\}$.
\end{cor}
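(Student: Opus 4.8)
The plan is to observe that the star $S_n$ is precisely the complete bipartite graph $K_{1,n-1}$, so the corollary is a direct specialization of the complete bipartite theorem above with the two part sizes taken to be $n-1$ and $1$. Letting the size-$(n-1)$ part play the role of the set $A$ in that theorem and the lone central vertex the role of $B$ (so that its parameters become $n-1$ and $1$), the eigenvalue block of multiplicity $(n-1)-1=n-2$ becomes $\frac{2(n-1)+1}{2(n-1)+1-2}=\frac{2n-1}{2n-3}$, the eigenvalue block associated with the size-$1$ part has multiplicity $1-1=0$ and hence disappears, and $0$ survives as a simple eigenvalue. The only remaining point is that the leftover eigenvalue $\frac{2(n^2+m^2+mn-n-m)}{(2n+m-2)(n+2m-2)}$ of that theorem simplifies, under this substitution, to $\frac{2(n-1)^2}{(2n-3)(n-1)}=\frac{2n-2}{2n-3}$; this is a one-line computation (the numerator collapses to $2(n-1)^2$).

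Alternatively, and in a self-contained way, I would derive the spectrum directly from Theorem \ref{nonadjtwins}. The $n-1$ leaves of $S_n$ are pairwise non-adjacent twins, each of transmission $\tr(v)=1+2(n-2)=2n-3$. Pairing a fixed leaf with each of the other $n-2$ leaves, Theorem \ref{nonadjtwins} produces the eigenvalue $\frac{(2n-3)+2}{2n-3}=\frac{2n-1}{2n-3}$ with multiplicity at least $n-2$, the corresponding eigenvectors (of the form $[1,-1,0,\dots,0]^T$ in suitable coordinates) being linearly independent. Together with $\partial^{\cL}_1=0$ this accounts for $n-1$ of the $n$ eigenvalues; calling the last one $\nu$ and using $\sum_{i=1}^n\partial^{\cL}_i=\trace(\NDL)=n$ gives
\[\nu=n-(n-2)\cdot\frac{2n-1}{2n-3}=\frac{n(2n-3)-(n-2)(2n-1)}{2n-3}=\frac{2n-2}{2n-3}.\]
Since $\nu\neq\frac{2n-1}{2n-3}$, the multiplicity of $\frac{2n-1}{2n-3}$ is exactly $n-2$, and the claimed spectrum follows.

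I do not expect any genuine obstacle: this is a corollary, and the work is pure bookkeeping — confirming that the multiplicities sum to $n$ and carrying out the trace arithmetic, exactly as in the proof of the complete bipartite case. If anything, the only thing to be slightly careful about is the degenerate behavior of the second partite set (size $1$), which contributes no eigenvalue block of its own and instead forces the ``extra'' eigenvalue $\nu$ to absorb the discrepancy in the trace; both computations above make this explicit.
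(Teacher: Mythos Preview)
Your proposal is correct and matches the paper's approach: the corollary is stated in the paper without proof, immediately following the $K_{m,n}$ theorem, and is meant to be read as the specialization $S_n=K_{1,n-1}$ that you carry out in your first paragraph. Your alternative self-contained derivation via Theorem~\ref{nonadjtwins} and the trace identity is also correct and simply reproduces, in this special case, the argument used to prove the $K_{m,n}$ theorem itself.
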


\begin{thm}
The complete graph on $n$ vertices with one edge removed, $K_n-e$, has $\spec(\NDL)=\{0, \frac{n^2-n+2}{n(n-1)}, \left(\frac{n}{n-1}\right)^{(n-3)}, \frac{n+2}{n}\}.$
\end{thm}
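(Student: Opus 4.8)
The plan is to read off most of the spectrum from the twin structure of $K_n-e$ and then recover the one leftover eigenvalue from the trace. Fix notation: obtain $K_n-e$ from $K_n$ on $v_1,\dots,v_n$ by deleting the edge $v_1v_2$. The only non-unit distance is $d(v_1,v_2)=2$, so $\tr(v_1)=\tr(v_2)=2+(n-2)=n$ while $\tr(v_i)=1+1+(n-3)=n-1$ for every $i\ge 3$. Thus there are exactly two transmission values, $n$ on $\{v_1,v_2\}$ and $n-1$ on $\{v_3,\dots,v_n\}$.

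Next I would harvest the twin eigenvalues. The pair $v_1,v_2$ are non-adjacent twins with common transmission $k=n$, so Theorem~\ref{nonadjtwins} gives that $\frac{n+2}{n}$ is an eigenvalue of $\NDL(K_n-e)$ with eigenvector $[1,-1,0,\dots,0]^T$. The vertices $v_3,\dots,v_n$ are pairwise adjacent twins with common transmission $k=n-1$; applying Corollary~\ref{adjtwins} to the pairs $\{v_3,v_j\}$ (or, equivalently, checking directly, exactly as in the proof of Theorem~\ref{nonadjtwins}, that $\NDL(\mathbf{e}_3-\mathbf{e}_j)=\frac{n}{n-1}(\mathbf{e}_3-\mathbf{e}_j)$ for $j=4,\dots,n$) shows that $\frac{n}{n-1}$ is an eigenvalue of multiplicity at least $n-3$, witnessed by the $n-3$ linearly independent vectors $\mathbf{e}_3-\mathbf{e}_4,\dots,\mathbf{e}_3-\mathbf{e}_n$. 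Together with $\partial^{\cL}_1=0$ (with eigenvector $T^{1/2}\mathbf{1}$, which is clearly independent of the difference vectors above), this produces $1+1+(n-3)=n-1$ eigenvalues, all of them explicitly.

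Finally I would pin down the remaining eigenvalue $\nu$ using $\sum_{i=1}^n\partial^{\cL}_i=\trace(\NDL)=n$, which yields $\nu=n-\tfrac{n+2}{n}-(n-3)\tfrac{n}{n-1}$; putting this over the common denominator $n(n-1)$ and simplifying (the $n^2$-terms cancel, leaving $2n^2-(n+2)(n-1)=n^2-n+2$ in the numerator) gives $\nu=\frac{n^2-n+2}{n(n-1)}$, so $\spec(\NDL(K_n-e))=\{0,\ \frac{n^2-n+2}{n(n-1)},\ (\frac{n}{n-1})^{(n-3)},\ \frac{n+2}{n}\}$ as claimed.

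There is essentially no hard step here; the only point demanding care is the multiplicity bookkeeping — verifying that the $n-3$ adjacent-twin eigenvectors, the non-adjacent-twin eigenvector, and the eigenvector for $0$ are linearly independent, so that the trace computation genuinely accounts for the whole spectrum rather than overcounting. It is also worth noting the degenerate case $n=3$, where $K_3-e=P_3$, the multiplicity-$(n-3)$ block is empty, and the formula reduces to $\{0,\tfrac43,\tfrac53\}$, which one can check by hand as a sanity test.
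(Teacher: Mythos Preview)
Your proof is correct and follows essentially the same approach as the paper's own proof: identify the twin structure, apply Theorem~\ref{nonadjtwins} and Corollary~\ref{adjtwins} to obtain $n-1$ eigenvalues, and recover the last one from the trace. You actually supply more detail than the paper (explicit eigenvectors for the independence bookkeeping, the arithmetic for $\nu$, and the $n=3$ sanity check), but the strategy is identical.
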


\bpf
Let $V(K_n)=V(K_n-e)=\{v_1,\dots,v_n\}$. By vertex transitivity of the complete graph, let $e=v_1v_2$. Then it is easy to observe $v_1$ and $v_2$ are non-adjacent twins while $v_3,\dots,v_n$ are adjacent twins. Since $\tr(v_1)=\tr(v_2)=n$ and $\tr(v_i)=n-1$ for $i=3,\dots,n$, Theorem \ref{nonadjtwins} shows $\frac{n+2}{n}$ is an eigenvalue with multiplicity 1 while Corollary \ref{adjtwins} shows $\frac{n}{n-1}$ is an eigenvalue with multiplicity $n-3$. Since 0 is always an eigenvalue, we have accounted for $n-3+1+1=n-1$ eigenvalues. Calculation shows the remaining eigenvalue is 
\[
\nu = \frac{n^2-n+2}{n(n-1)}.
 \]

\epf


\section{Characteristic polynomials}\label{sec:GenDistChar}

An alternative to direct computation for determining eigenvalues of a matrix is to compute the characteristic polynomial of the matrix. In this section we define the distance generalized characteristic polynomial and show if the polynomial is equal for two non-isomorphic graphs then the graphs must have the same transmission sequence. Then we generalize a method of computing the $\cL$ characteristic polynomial to the $\NDL$ characteristic polynomial.


Let $N(\lambda,r,G)=\lambda I_n -A(G)+rD(G)$. The {\em generalized characteristic polynomial} is $\phi(\lambda,r,G)=\det(N(\lambda,r,G))$. When the parameters are clear, we also use the notation just $N(G)$ and $\phi(G)$ or even just $N$ and $\phi$ if the graph is also clear. It is known that if $\phi(G)=\phi(H)$ then $G$ and $H$ are $A,L,Q,$ and $\cL$ cospectral. We now define an analogous polynomial for the distance matrices $\D, \DL, \DQ,$ and $\NDL$.

\begin{defn}
Let $N^{\D}(\lambda,r,G)=\lambda I_n -\D(G)+rT(G)$. The {\em distance generalized characteristic polynomial} is $\phi^{\D}(\lambda,r,G)=\det(N^{\D}(\lambda,r,G))$.
\end{defn}

When the parameters intended are clear, we also use the notation just $N^{\D}(G)$ and $\phi^{\D}(G)$ or even just $N^{\D}$ and $\phi^{\D}$ if graph is also clear. 

\begin{thm}\label{thm:CharPolyRecover}
From $\phi^{\D}(\lambda,r,G)$ we can recover the characteristic polynomials for $\D$ and $\DQ$, characteristic polynomial of $\DQ$ up to sign and the characteristic polynomial of $\NDL$ up to a constant.
\end{thm}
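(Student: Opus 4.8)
The plan is to imitate the argument that recovers the characteristic polynomials of $A$, $L$, $Q$ (and, up to a scalar, $\cL$) from the ordinary generalized characteristic polynomial $\phi(\lambda,r,G)=\det(\lambda I-A+rD)$: evaluate $\phi^{\D}$ at a few special values of $r$, and for the normalized matrix factor $T^{-1/2}$ through a determinant. I will use the identities $\DL=T-\D$, $\DQ=T+\D$, and $\NDL=I-T^{-1/2}\D T^{-1/2}$, together with the fact that for a connected graph on $n\ge 2$ vertices every transmission is positive, so $T$ is invertible and $T^{-1/2}$ exists. (Here I read the claim as: $p_{\D}$ and $p_{\DQ}$ are recovered outright, $p_{\DL}$ up to sign, and $p_{\NDL}$ up to a constant.)

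First I would record the specializations that are themselves characteristic polynomials. Taking $r=0$ gives $\phi^{\D}(\lambda,0,G)=\det(\lambda I-\D)=p_{\D}(\lambda)$. Taking $r=-1$ gives $\phi^{\D}(\lambda,-1,G)=\det(\lambda I-\D-T)=\det(\lambda I-\DQ)=p_{\DQ}(\lambda)$. Taking $r=1$ gives $\phi^{\D}(\lambda,1,G)=\det(\lambda I-\D+T)=\det(\lambda I+\DL)$; since the eigenvalues of $\DL$ are $\partial^{L}_1,\dots,\partial^{L}_n$, this product equals $\prod_{i=1}^{n}(\lambda+\partial^{L}_i)=(-1)^n p_{\DL}(-\lambda)$, so $p_{\DL}(\lambda)=(-1)^n\phi^{\D}(-\lambda,1,G)$ — the recovery of $p_{\DL}$ up to the reflection $\lambda\mapsto-\lambda$ and a sign.

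For the normalized distance Laplacian I would compute
\[
p_{\NDL}(\lambda)=\det(\lambda I-\NDL)=\det\bigl((\lambda-1)I+T^{-1/2}\D T^{-1/2}\bigr)=\frac{1}{\det T}\,\det\bigl((\lambda-1)T+\D\bigr),
\]
where the last equality comes from writing $(\lambda-1)I+T^{-1/2}\D T^{-1/2}=T^{-1/2}\bigl((\lambda-1)T+\D\bigr)T^{-1/2}$ and using multiplicativity of the determinant. Factoring $-1$ out of the $n\times n$ matrix gives $\det\bigl((\lambda-1)T+\D\bigr)=(-1)^n\det\bigl((1-\lambda)T-\D\bigr)=(-1)^n\phi^{\D}(0,1-\lambda,G)$, hence
\[
p_{\NDL}(\lambda)=\frac{(-1)^n}{\det T}\,\phi^{\D}(0,1-\lambda,G).
\]
Since $(-1)^n/\det T$ does not involve $\lambda$, this recovers $p_{\NDL}$ up to a nonzero constant multiple, as claimed; in fact $\det T$ is the coefficient of $r^n$ in $\phi^{\D}(0,r,G)$, so the constant could even be pinned down, but this is not needed.

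None of these steps is difficult. The only point needing care is the normalized case: one must justify pulling $T^{-1/2}$ through the determinant — which is precisely where positivity of the transmissions (hence connectedness) is used — and then keep the $(-1)^n$ and the $\det T$ factor straight. I expect this bookkeeping, rather than any conceptual obstacle, to be where a slip is most likely.
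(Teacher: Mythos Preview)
Your proof is correct and follows essentially the same approach as the paper: specialize $r$ to $0,-1,1$ to recover $p_{\D}$, $p_{\DQ}$, and $(-1)^n p_{\DL}$ respectively, and set $\lambda=0$, $r=1-x$ together with the factorization through $T^{-1/2}$ to obtain $p_{\NDL}$ up to the constant $(-1)^n/\det T$. You also correctly read the statement as intending $\DL$ (up to sign) rather than the repeated $\DQ$.
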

\begin{proof}
We show that through proper choices of $\lambda$ and $r$, we can obtain the desired polynomials. First, for $\D$, choose $\lambda=x$ and $r=0$ and observe \[\phi^{\D}(x,0,G)=\det(x I_n -\D(G))=p_{\D(G)}(x).\] For $\DL$, choose $\lambda=-x$ and $r=1$, which gives \[ \phi^{\D}(-x,1,G)=\det(-x I_n -\D(G)+T(G))=(-1)^n p_{\DL(G)}(x).\]If
For $\DQ$, choose $\lambda=x$ and $r=-1$, resulting in
\[\phi^{\D}(x,-1,G)=\det(x I_n -\D(G)-T(G))= p_{\DQ(G)}(x).\]

Finally, for $\NDL$ we choose $\lambda=0$ and $r=-x+1$.
\begin{eqnarray*}
\phi^{\D}(0,-x+1,G)&=&\det(-\D(G)+(-x+1)T(G))\\
&=&\det(T(G))\det(T(G)^{-1/2}(\DL(G)-xT(G))T(G)^{-1/2})\\
&=&\det(T(G))\det(\NDL(G)-xI_n))\\
&=&(-1)^n\det(T(G)) p_{\NDL(G)}(x).
\end{eqnarray*}\end{proof}

\begin{cor}
If $\phi^{\D}(G)=\phi^{\D}(H)$ for two graphs $G$ and $H$, then $G$ and $H$ are $\D$, $\DL$, $\DQ$, and $\NDL$ cospectral.
\end{cor}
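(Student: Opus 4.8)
The plan is to treat this as an immediate consequence of Theorem~\ref{thm:CharPolyRecover}. The key point is that $\phi^{\D}(\lambda,r,G)=\det(\lambda I_n-\D(G)+rT(G))$ is a polynomial in the two indeterminates $\lambda$ and $r$, so the hypothesis $\phi^{\D}(G)=\phi^{\D}(H)$ must be read as an identity of bivariate polynomials; any such identity is preserved under substitution of values for $\lambda$ and $r$. First I would observe that the degree of $\phi^{\D}(\lambda,r,G)$ in $\lambda$ equals the order of $G$, so equality of the two polynomials forces $G$ and $H$ to have the same order $n$; this is needed before quoting the $(-1)^n$ factors that appear below.

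Next I would perform the four specializations used in the proof of Theorem~\ref{thm:CharPolyRecover}. Taking $(\lambda,r)=(x,0)$ gives $p_{\D(G)}(x)=p_{\D(H)}(x)$; taking $(\lambda,r)=(-x,1)$ gives $(-1)^n p_{\DL(G)}(x)=(-1)^n p_{\DL(H)}(x)$, hence $p_{\DL(G)}=p_{\DL(H)}$; and taking $(\lambda,r)=(x,-1)$ gives $p_{\DQ(G)}(x)=p_{\DQ(H)}(x)$. Since polynomials that coincide have the same multiset of roots, these three identities say exactly that $G$ and $H$ are $\D$-, $\DL$-, and $\DQ$-cospectral.

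The $\NDL$ case needs one extra line. Taking $(\lambda,r)=(0,-x+1)$ yields the polynomial identity $(-1)^n\det(T(G))\,p_{\NDL(G)}(x)=(-1)^n\det(T(H))\,p_{\NDL(H)}(x)$. Because each $p_{\NDL(\cdot)}$ is monic of degree $n$, comparing leading coefficients gives $\det(T(G))=\det(T(H))$, and for a connected graph on $n\ge 2$ vertices every transmission is positive, so this common value is nonzero; dividing through then gives $p_{\NDL(G)}=p_{\NDL(H)}$, so $G$ and $H$ are $\NDL$-cospectral as well.

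I do not expect any real obstacle: the whole argument is substitution together with the remark that multiplying a polynomial by a nonzero scalar (or a sign) does not change its roots. The only care required is the bookkeeping just described — reading the hypothesis at the level of polynomials in $\lambda$ and $r$ so the substitutions are legitimate, extracting $n_G=n_H$ from the $\lambda$-degree first, and using positivity of transmissions to clear the $\det T$ factor in the normalized case.
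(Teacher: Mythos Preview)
Your proposal is correct and follows essentially the same approach as the paper: both arguments invoke the specializations of Theorem~\ref{thm:CharPolyRecover}, match orders via the degree of $\phi^{\D}$ in $\lambda$, and clear the scalar in the $\NDL$ case by comparing leading coefficients of the monic polynomials $p_{\NDL(\cdot)}$. Your version is slightly more explicit about why the scalar $\det(T(G))$ is nonzero, but otherwise the reasoning is the same.
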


\begin{proof}
In Theorem \ref{thm:CharPolyRecover}, we showed the characteristic polynomials of $\D$ and $\DQ$ can be recovered exactly, so it is clear that $G$ and $H$ are $\D$ and $\DQ$ cospectral. Let $G$ have order $n$ and $H$ have order $n'$, then if $\phi^{\D}(G)=\phi^{\D}(H)$, necessarily $n=n'$ since otherwise the polynomials would not have the same degree. Therefore $(-1)^n p_{\DL(G)}(x)=(-1)^{n'} p_{\DL(H)}(x)$ implies $p_{\DL(G)}(x)=p_{\DL(H)}(x)$ so $G$ and $H$ are $\DL$ cospectral.
The leading term for all graphs $G$ of $p_{\NDL(G)}(x)$ is $x^n$, so for some constants $c_1,c_2$,  $c_1p_{\NDL(G)}(x)=c_2p_{\NDL(H)}(x)$ implies $c_1=c_2$ and $p_{\NDL(G)}(x)=p_{\NDL(H)}(x)$. Therefore $G$ and $H$ are $\NDL$ cospectral.
\end{proof}

In \cite{LLWX11}, the authors explore properties of non-isomorphic graphs $G$ and $H$ for which $\phi(G)=\phi(H)$. The next theorem is one of their main results.

\begin{thm} \textnormal{\cite[Theorem 2.1]{LLWX11}}\label{thm:DegSeq}
If $\phi(G)=\phi(H)$, then graphs $G$ and $H$ have the same degree sequence.
\end{thm}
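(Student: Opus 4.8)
The plan is to read the elementary symmetric functions of the degree sequence directly off the coefficients of $\phi(\lambda,r,G)=\det\bigl(\lambda I_n - A(G) + rD(G)\bigr)$, and then invoke the fact that the values $e_1,\dots,e_n$ of a multiset of $n$ numbers determine that multiset. First I would write out the permutation expansion of the determinant of $N(\lambda,r,G)$, regarding each diagonal entry as $\lambda + r\deg(v_i)$ and each off-diagonal entry as $-A(G)_{ij}$, which carries no $\lambda$ and no $r$.

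The key observation is a degree count in the pair of variables $(\lambda,r)$. The monomial $\lambda^{n-k}r^{k}$ has total $(\lambda,r)$-degree exactly $n$. A permutation $\sigma$ with $f$ fixed points contributes the product of $f$ diagonal factors $\lambda+r\deg(v_i)$, each of combined $(\lambda,r)$-degree one, times off-diagonal factors of combined degree zero; hence every monomial it produces has $(\lambda,r)$-degree at most $f$. A non-identity permutation moves at least two indices, so it fixes at most $n-2$ of them and contributes only monomials of combined degree at most $n-2<n$. Therefore $\lambda^{n-k}r^{k}$ can arise solely from the identity term $\prod_{i=1}^{n}\bigl(\lambda+r\deg(v_i)\bigr)$, in which its coefficient is precisely $e_k\bigl(\deg(v_1),\dots,\deg(v_n)\bigr)$, the $k$-th elementary symmetric polynomial of the degrees.

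With this in hand the conclusion follows quickly. Since $\phi(G)=\phi(H)$ as polynomials in $\lambda$ and $r$, comparing the leading $\lambda^{n}$ term forces $G$ and $H$ to have the same order $n$, and comparing the coefficient of $\lambda^{n-k}r^{k}$ for each $k=1,\dots,n$ forces $e_k$ of the degree multiset of $G$ to equal $e_k$ of the degree multiset of $H$. Because $e_1,\dots,e_n$ are exactly the coefficients of the monic polynomial $\prod_i\bigl(t-\deg(v_i)\bigr)$, equality of all $e_k$ means the two degree multisets are roots of the same polynomial and hence coincide; that is, $G$ and $H$ have the same degree sequence.

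The main obstacle is the isolation step: making the degree-counting argument airtight so that the bidegree $(n-k,k)$ genuinely receives a contribution from no permutation except the identity. Once the fixed-point bound is stated cleanly (a derangement of a nonempty set moves at least two points, so a non-identity permutation fixes at most $n-2$ indices), the remainder is bookkeeping. I would also remark that the argument uses only that $A(G)$ has zero diagonal and that $D(G)$ is diagonal, so it extends verbatim to weighted graphs with $\deg(v_i)$ read as the weighted degree.
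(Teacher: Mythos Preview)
Your argument is correct. Isolating the top-degree homogeneous part in $(\lambda,r)$ via the permutation expansion cleanly yields $e_k$ of the degree multiset as the coefficient of $\lambda^{n-k}r^k$, and Newton's identities (or, as you say, the factorization $\prod_i(t-\deg(v_i))$) finish the job.

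This is a genuinely different route from the one the paper attributes to \cite{LLWX11}. There the proof proceeds through the resolvent identity of Lemma~\ref{lem:eigendecomp}, $(\lambda I_n-M)^{-1}=\sum_i \mathbf{y_i}\mathbf{y_i}^T/(\lambda-\lambda_i)$, together with a lemma on diagonal matrices, and ultimately concludes that $D(G)$ and $D(H)$ are similar. Your approach avoids spectral machinery entirely and is more self-contained; it also makes the dependence on the data transparent, so the generalization to Theorem~\ref{thm:GenPoly} is immediate. In fact your closing remark is slightly conservative: the argument does not even need $A(G)$ to have zero diagonal, since a nonzero diagonal entry of $M$ only adds a constant to each factor $\lambda+r(D)_{ii}-M_{ii}$ and therefore cannot contribute to the degree-$n$ homogeneous part. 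The resolvent approach, on the other hand, sits inside a broader analytic framework that can sometimes extract finer information (for instance about eigenvector overlaps), though none of that extra power is needed for the present statement.
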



 The proof of the above theorem uses \cite[Lemma 2.3]{LLWX11}, which holds for any diagonal matrix but is applied to the degree matrix. It also uses \cite[Lemma 2.4]{LLWX11}, which is stated specifically for the adjacency matrix. However, this lemma holds for all real symmetric matrices; we state the lemma in its full generality next from its original source.


\begin{lem}\textnormal{\cite[p. 186]{GR01}}\label{lem:eigendecomp} Let $\{\mathbf{y_i}\}$ be a set of orthonormal eigenvectors of the real symmetric matrix $M$ with associated eigenvalues $\lambda_i$ $(i=1,2,...,n)$. Then $(\lambda I_n-M)^{-1}=\sum_{i=1}^n \frac{\mathbf{y_i}\mathbf{y_i}^T}{\lambda-\lambda_i}$.
\end{lem}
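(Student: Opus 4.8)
The statement to prove is Lemma~\ref{lem:eigendecomp}, the spectral-resolution formula for the resolvent $(\lambda I_n - M)^{-1}$ of a real symmetric matrix $M$.

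The plan is to exploit the orthogonal diagonalization of $M$. Since $M$ is real symmetric, the hypothesized set $\{\mathbf{y_i}\}$ of orthonormal eigenvectors forms an orthonormal basis of $\R^n$, so the matrix $P = [\mathbf{y_1} \mid \cdots \mid \mathbf{y_n}]$ is orthogonal, $P^T P = I_n$, and $M = P \Lambda P^T$ where $\Lambda = \diag(\lambda_1, \dots, \lambda_n)$. First I would note the resolvent identity $\lambda I_n - M = P(\lambda I_n - \Lambda)P^T$, which holds because $\lambda I_n = P(\lambda I_n)P^T$. Provided $\lambda \notin \{\lambda_1, \dots, \lambda_n\}$, the diagonal matrix $\lambda I_n - \Lambda$ is invertible with inverse $\diag\!\big(\tfrac{1}{\lambda-\lambda_1}, \dots, \tfrac{1}{\lambda-\lambda_n}\big)$, and hence
\[
(\lambda I_n - M)^{-1} = P(\lambda I_n - \Lambda)^{-1}P^T.
\]
The final step is to expand this product: writing $(\lambda I_n - \Lambda)^{-1} = \sum_{i=1}^n \frac{1}{\lambda - \lambda_i} E_{ii}$ where $E_{ii}$ is the standard matrix unit, and using $P E_{ii} P^T = \mathbf{y_i}\mathbf{y_i}^T$, yields $(\lambda I_n - M)^{-1} = \sum_{i=1}^n \frac{\mathbf{y_i}\mathbf{y_i}^T}{\lambda - \lambda_i}$, as desired. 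Alternatively, one can verify the claimed identity directly by multiplying the right-hand side by $\lambda I_n - M$ and using $M\mathbf{y_i} = \lambda_i \mathbf{y_i}$ together with the resolution of the identity $\sum_{i=1}^n \mathbf{y_i}\mathbf{y_i}^T = I_n$ and orthonormality $\mathbf{y_i}^T\mathbf{y_j} = \delta_{ij}$; the cross terms telescope and one recovers $I_n$.

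There is no real obstacle here: the only subtlety is the implicit hypothesis that $\lambda$ avoids the spectrum of $M$ (otherwise neither side is defined), and that the $\{\mathbf{y_i}\}$ genuinely span $\R^n$, which is automatic from the spectral theorem for real symmetric matrices once one observes that an orthonormal set of $n$ eigenvectors in $\R^n$ is necessarily a basis. Since this lemma is quoted verbatim from \cite{GR01}, the cleanest exposition is simply to cite it and, if a proof is wanted for completeness, to present the two-line verification via the resolution of the identity rather than reproving the spectral theorem.
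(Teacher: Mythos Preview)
Your proof is correct and complete; the orthogonal diagonalization $M = P\Lambda P^T$ followed by inversion of the diagonal factor is the standard argument, and your remark about the implicit hypothesis $\lambda\notin\spec(M)$ is apt. Note that the paper itself does not prove this lemma at all---it is simply quoted from \cite[p.~186]{GR01} as a known fact and used as a black box in the subsequent argument, so there is no ``paper's own proof'' to compare against; your write-up would serve perfectly well if a self-contained justification were wanted.
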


We now observe the proof given for \cite[Theorem 2.1]{LLWX11} can be used to show the following more general result.

\begin{thm}\label{thm:GenPoly}
Let $M_1$ and $M_2$ be $n\times n$ real symmetric matrices and let $D_1$ and $D_2$ be $n\times n$ diagonal matrices. If $\det\left(\lambda I_n-M_1+rD_1\right)=\det\left(\lambda I_n-M_2+rD_2\right)$, then $\spec(M_1)=\spec(M_2)$ and $\spec(D_1)=\spec(D_2)$.
\end{thm}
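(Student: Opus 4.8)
The plan is to mimic the structure of the proof of \cite[Theorem 2.1]{LLWX11}, but to strip away everything specific to the adjacency matrix and the degree matrix. The identity $\det(\lambda I_n - M_1 + rD_1) = \det(\lambda I_n - M_2 + rD_2)$ holds as a polynomial in two variables $\lambda$ and $r$, so I may expand each side as a polynomial and compare coefficients. Setting $r=0$ immediately gives $\det(\lambda I_n - M_1) = \det(\lambda I_n - M_2)$, i.e.\ $p_{M_1}(\lambda) = p_{M_2}(\lambda)$, so $\spec(M_1) = \spec(M_2)$; call this common characteristic polynomial $p(\lambda)$. The real content is extracting $\spec(D_1) = \spec(D_2)$, and for that I would isolate the part of the determinant that is linear in $r$.

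First I would write $\det(\lambda I_n - M_i + rD_i)$ and differentiate with respect to $r$ at $r=0$ (equivalently, read off the coefficient of $r^1$). Using the standard formula for the derivative of a determinant,
\begin{equation*}
\left.\frac{\partial}{\partial r}\det(\lambda I_n - M_i + rD_i)\right|_{r=0} = \det(\lambda I_n - M_i)\,\trace\!\left((\lambda I_n - M_i)^{-1} D_i\right),
\end{equation*}
valid for $\lambda$ outside $\spec(M_i)$. Since the left-hand sides agree and $\det(\lambda I_n - M_1) = \det(\lambda I_n - M_2) = p(\lambda)$, I get $p(\lambda)\,\trace((\lambda I_n - M_1)^{-1} D_1) = p(\lambda)\,\trace((\lambda I_n - M_2)^{-1} D_2)$, hence $\trace((\lambda I_n - M_1)^{-1} D_1) = \trace((\lambda I_n - M_2)^{-1} D_2)$ for all $\lambda$ outside the (common) spectrum. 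Now apply Lemma \ref{lem:eigendecomp} to each $M_i$ with an orthonormal eigenbasis $\{\mathbf{y}^{(i)}_j\}$: then $\trace((\lambda I_n - M_i)^{-1} D_i) = \sum_{j=1}^n \frac{\mathbf{y}^{(i)\,T}_j D_i \mathbf{y}^{(i)}_j}{\lambda - \lambda_j}$, a partial-fraction expansion whose pole at each eigenvalue $\lambda_j$ of $M_i$ has residue equal to the sum of the Rayleigh quotients $\mathbf{y}^{(i)\,T}_j D_i \mathbf{y}^{(i)}_j$ over the eigenvectors belonging to that eigenvalue. Comparing residues at each pole forces $\sum_j \mathbf{y}^{(1)\,T}_j D_1 \mathbf{y}^{(1)}_j = \sum_j \mathbf{y}^{(2)\,T}_j D_2 \mathbf{y}^{(2)}_j$, and summing over all poles (or noting that summing residues over an eigenbasis just gives $\trace D_i$) yields $\trace(D_1) = \trace(D_2)$. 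To get the full spectrum of $D_i$ and not merely its trace, I would repeat this with higher powers of $r$, or more cleanly substitute $M_i \mapsto M_i$, $D_i \mapsto D_i$ and track the coefficient of $r^k$: the $r^k$-coefficient of $\det(\lambda I_n - M_i + rD_i)$ is (up to the factor $p(\lambda)$ and lower-order corrections) a sum over $k$-subsets of eigenvectors of products of Rayleigh quotients, and more simply, the coefficient of $r^n$ is exactly $\det(D_i)$, while intermediate coefficients encode the elementary symmetric functions of the Rayleigh quotients $\{\mathbf{y}^{(i)\,T}_j D_i \mathbf{y}^{(i)}_j\}$. The key observation that makes this work is the following: since the $M_i$ are symmetric, $D_i$ is congruent (indeed orthogonally similar) to the matrix whose $(j,j)$ entry is $\mathbf{y}^{(i)\,T}_j D_i \mathbf{y}^{(i)}_j$ only after also accounting for off-diagonal Rayleigh terms — so a cleaner route is to observe that $\det(\lambda I_n - M_i + rD_i) = \det(\lambda I_n - M_i)\det(I_n + r(\lambda I_n - M_i)^{-1}D_i)$ and that the second factor equals $\prod(1 + r\,\eta_j(\lambda))$ where $\eta_j(\lambda)$ are the eigenvalues of $(\lambda I_n - M_i)^{-1}D_i$; matching this product as a polynomial in $r$ for all $\lambda$ pins down the multiset of functions $\{\eta_j(\lambda)\}_i$, and taking $\lambda \to \infty$ gives $\eta_j(\lambda) \sim \nu_j/\lambda$ where $\{\nu_j\}$ are the eigenvalues of $D_i$, so $\spec(D_1) = \spec(D_2)$.

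The main obstacle I anticipate is the bookkeeping when $M_i$ has repeated eigenvalues: the partial-fraction/residue argument only directly yields, for each eigenvalue $\lambda_j$ of the common spectrum, the \emph{sum} of the Rayleigh quotients over the corresponding eigenspace, i.e.\ $\trace(P_j D_i)$ where $P_j$ is the spectral projector onto the $\lambda_j$-eigenspace. This is enough for the trace of $D_i$ but not obviously enough for the whole spectrum of $D_i$ without the higher-$r$-power analysis. The factorization $\det(I_n + r(\lambda I_n-M_i)^{-1}D_i) = \prod_j(1+r\eta_j(\lambda))$ sidesteps this: it is a polynomial identity in $r$ whose coefficients are symmetric functions of the $\eta_j(\lambda)$, these agree for $i=1,2$ for all real $\lambda$ (both sides being rational functions agreeing off a finite set), and the asymptotics as $\lambda\to\infty$ recover $\spec(D_i)$ exactly. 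I would write the proof along these lines, being careful that $(\lambda I_n - M_i)^{-1}D_i$ need not be symmetric, so I should phrase the argument in terms of its characteristic polynomial / elementary symmetric functions rather than an orthonormal eigenbasis, and verify the $\lambda\to\infty$ limit rigorously (expanding $(\lambda I_n - M_i)^{-1} = \lambda^{-1}I_n + \lambda^{-2}M_i + \cdots$).
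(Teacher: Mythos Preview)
Your proposal is correct and matches the paper's approach: the paper does not give a self-contained argument but simply observes that the proof of \cite[Theorem~2.1]{LLWX11} (which hinges on Lemma~\ref{lem:eigendecomp}) applies verbatim in this generality, with $r=0$ yielding $\spec(M_1)=\spec(M_2)$ and the remainder showing $D_1$ and $D_2$ are similar. Your factorization $\det(\lambda I_n - M_i + rD_i) = p(\lambda)\det\bigl(I_n + r(\lambda I_n - M_i)^{-1}D_i\bigr)$ followed by the substitution $r\mapsto\lambda r$ and the limit $\lambda\to\infty$ cleanly collapses the identity to $\det(I_n + rD_1)=\det(I_n + rD_2)=\prod_j\bigl(1+r(D_i)_{jj}\bigr)$, which resolves the trace-versus-full-spectrum concern you raised.
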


\begin{proof}
That $\spec(M_1)=\spec(M_2)$ is immediate by letting $r=0$. The proof that the degree sequences are the same (Theorem \ref{thm:DegSeq}) is by showing $D(G)$ and $D(H)$ are similar matrices, which here shows $\spec(D_1)=\spec(D_2)$.
\end{proof}

Applying Theorem \ref{thm:GenPoly} to the real symmetric matrices $\D(G)$ and $\D(H)$ and the diagonal matrices $T(G)$ and $T(H)$, we obtain a result for $\phi^{\D}$ as a corollary.

\begin{cor}
If $\phi^{\D}(G)=\phi^{\D}(H)$, then graphs $G$ and $H$ have the same transmission sequence.
\end{cor}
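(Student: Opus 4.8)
The plan is to apply Theorem~\ref{thm:GenPoly} directly. Recall that $\phi^{\D}(\lambda,r,G)=\det(N^{\D}(\lambda,r,G))=\det(\lambda I_n - \D(G) + rT(G))$, and similarly $\phi^{\D}(\lambda,r,H)=\det(\lambda I_{n'} - \D(H) + rT(H))$. First I would note that if $\phi^{\D}(G)=\phi^{\D}(H)$ as polynomials in the two indeterminates $\lambda$ and $r$, then comparing degrees in $\lambda$ forces $n=n'$, so the two matrices are the same size and Theorem~\ref{thm:GenPoly} is applicable.

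Next I would set $M_1=\D(G)$, $M_2=\D(H)$, $D_1=T(G)$, and $D_2=T(H)$. These are $n\times n$ real symmetric and diagonal matrices respectively, exactly as required in the hypothesis of Theorem~\ref{thm:GenPoly}. The equality $\phi^{\D}(G)=\phi^{\D}(H)$ is precisely the equality $\det(\lambda I_n - M_1 + rD_1)=\det(\lambda I_n - M_2 + rD_2)$ of the theorem, so its conclusion gives $\spec(M_1)=\spec(M_2)$ and, more importantly for us, $\spec(D_1)=\spec(D_2)$. Since $D_1=T(G)$ and $D_2=T(H)$ are diagonal matrices, having the same spectrum (as a multiset) is the same as having the same multiset of diagonal entries, i.e., the same multiset of transmissions. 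That multiset, listed in (say) nonincreasing order, is exactly the transmission sequence, so $G$ and $H$ have the same transmission sequence.

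There is essentially no obstacle here: the corollary is a direct specialization of Theorem~\ref{thm:GenPoly}, whose proof in turn rests on Lemma~\ref{lem:eigendecomp} and the argument of \cite[Theorem~2.1]{LLWX11} showing $D_1$ and $D_2$ are similar. The only point that deserves a sentence of care is the matching of orders $n=n'$, which is needed both to invoke Theorem~\ref{thm:GenPoly} and to make sense of comparing the diagonal matrices; this follows immediately from the degree of $\phi^{\D}$ in the variable $\lambda$ being $n$. Everything else is just unwinding the definition of $\phi^{\D}$ and translating ``same spectrum of a diagonal matrix'' into ``same transmission sequence.''
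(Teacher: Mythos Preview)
Your proposal is correct and takes essentially the same approach as the paper: the corollary is obtained by applying Theorem~\ref{thm:GenPoly} with $M_1=\D(G)$, $M_2=\D(H)$, $D_1=T(G)$, $D_2=T(H)$. Your added remarks on matching the orders $n=n'$ and translating $\spec(D_1)=\spec(D_2)$ into equality of transmission sequences are sensible clarifications that the paper leaves implicit.
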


Characteristic polynomials are often difficult to calculate. Because of this, many reduction formulas exist. In \cite{O13}, Osborne provides one such reduction algorithm for the generalized characteristic polynomial $\phi(G)$. For a matrix $M(G)$ and a subset of vertices $\alpha\subset V$, let $M_{\alpha}(G)$ be the submatrix obtained by deleting the rows and columns corresponding to the vertices in $\alpha$ from $M$.

\begin{thm}{\em \cite{O13}}
Let $u$ be a vertex in $G$, let $\Cy(u)$ be the collection of cycles in $G$ containing $u$. Then
\[ \phi (\lambda,r,G)= (\lambda + \deg(u)r) \det(N_u(G)) - \sum_{w\sim u} \det(N_{\{u,w\}} (G))-2\sum_{Z\in \Cy(u)} \det(N_{Z}(G)). \]
\end{thm}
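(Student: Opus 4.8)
The plan is to prove the identity by expanding $\det(N(\lambda,r,G))$ with the Leibniz permutation formula and sorting the permutations according to how they treat the vertex $u$. Abbreviate $N=N(\lambda,r,G)$, so $N_{uu}=\lambda+r\deg(u)$, $N_{ij}=-1$ whenever $ij\in E(G)$, and $N_{ij}=0$ for distinct nonadjacent $i,j$. Writing $\det(N)=\sum_{\sigma}\sgn(\sigma)\prod_{i\in V}N_{i,\sigma(i)}$, the sum over permutations $\sigma$ of $V$, I would first group the permutations by the vertex set $\alpha=\alpha(\sigma)\ni u$ of the cycle of $\sigma$ through $u$. Every $\sigma$ factors uniquely as a cyclic permutation $\tau$ of $\alpha$ together with an arbitrary permutation $\pi$ of $V\setminus\alpha$, and both $\sgn$ and the monomial $\prod_i N_{i,\sigma(i)}$ are multiplicative over the cycles of $\sigma$; since $N_\alpha$ (the principal submatrix with rows and columns in $\alpha$ deleted, as in the statement) depends only on the set $\alpha$, the sum over $\pi$ collapses to $\det(N_\alpha)$, giving
\[ \det(N)=\sum_{\alpha\ni u} c(\alpha)\,\det(N_\alpha), \qquad c(\alpha):=\sum_{\tau}\sgn(\tau)\prod_{i\in\alpha}N_{i,\tau(i)}, \]
where the inner sum runs over all cyclic permutations $\tau$ of the set $\alpha$.

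It then remains to evaluate $c(\alpha)$ in the three cases $|\alpha|=1,2,\ge 3$ and reassemble. For $|\alpha|=1$ the only cyclic permutation is the identity, so $c(\{u\})=N_{uu}=\lambda+r\deg(u)$, yielding the first term $(\lambda+r\deg(u))\det(N_u)$. For $|\alpha|=2$, say $\alpha=\{u,w\}$, the only cyclic permutation is the transposition, with sign $-1$ and monomial $N_{uw}N_{wu}$, which is $1$ if $w\sim u$ and $0$ otherwise; summing over such sets contributes $-\sum_{w\sim u}\det(N_{\{u,w\}})$. For $|\alpha|=k\ge 3$, a cyclic $\tau$ on $\alpha$ has nonzero monomial exactly when consecutive vertices in the cyclic order determined by $\tau$ are adjacent in $G$ — equivalently, when the edge set $\{\{i,\tau(i)\}\}$ is that of a cycle $Z\in\Cy(u)$ with $V(Z)=\alpha$; then each of the $k$ factors equals $-1$, so the monomial is $(-1)^k$ while $\sgn(\tau)=(-1)^{k-1}$, hence $\sgn(\tau)\prod_i N_{i,\tau(i)}=-1$. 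Each cycle $Z$ through $u$ arises from exactly two such $\tau$ (its two orientations), so summing over all pairs $(\alpha,\tau)$ with $|\alpha|\ge 3$ is the same as summing over all pairs (cycle through $u$, orientation) and contributes $-2\sum_{Z\in\Cy(u)}\det(N_Z)$. Adding the three contributions gives the claimed formula.

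The only step that demands real care is the case $k\ge 3$: one must verify that the cyclic permutations of $\alpha$ with nonvanishing monomial are precisely the oriented Hamiltonian cycles of the subgraph induced on $\alpha$, that (oriented cycle) $\mapsto$ (cyclic permutation) is a bijection onto them, that each orientation contributes exactly $-1$ after combining $\sgn(\tau)=(-1)^{k-1}$ with the monomial $(-1)^k$, and that reorganizing the sum over pairs (cycle through $u$, orientation) correctly handles subsets $\alpha$ whose induced subgraph contains several cycles (e.g. when $G[\alpha]=K_4$). One should also record at the outset the routine but essential observation that $N_\alpha$ depends only on the set $\alpha$, which is what legitimizes collapsing the complementary permutation sums into a single determinant independent of the choice of cyclic $\tau$; the remainder is the standard multiplicativity of the sign and the monomial over the cycles of a permutation, exactly as in \cite{O13}.
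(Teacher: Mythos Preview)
Your proposal is correct and is essentially the same argument the paper attributes to \cite{O13}: the paper does not reprove this cited theorem directly, but immediately proves the distance analogue ``using similar proof techniques,'' namely the Leibniz expansion of the determinant with permutations partitioned according to the cycle through $u$, and your $|\alpha|=1,2,\ge 3$ case split is exactly that argument specialized to the adjacency setting. Your extra remark about multiple Hamiltonian cycles in $G[\alpha]$ being correctly absorbed into the sum over $Z\in\Cy(u)$ is a valid point that the paper's proof of the analogue leaves implicit.
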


We now prove a reduction result for $\phi^{\D} (G)$ using similar proof techniques.

\begin{thm}
Let $u$ be a vertex in $G$, let $\CP(u)$ denote the cyclic permutations of $S_n$ that do not fix $u$, and let $V(\sigma)$ denote vertices not fixed by a permutation $\sigma$. Then,
\[ \phi^{\D} (\lambda,r,G)= (\lambda + t(u)r) \det(N^{\D}_u(G)) - \sum_{\substack{\sigma \in \CP(u)\\|\sigma|=k}} d(u,\sigma(u))d(\sigma(u),\sigma^2(u))\dots d(\sigma^{k-1}(u),u)  \det(N_{V(\sigma)} (G)). \]
\end{thm}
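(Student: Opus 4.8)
The plan is to expand $\det(N^{\D}(\lambda,r,G))$ along its definition as a signed sum over permutations and group the terms according to how each permutation $\sigma \in S_n$ treats the fixed vertex $u$. Recall $N^{\D}(\lambda,r,G) = \lambda I_n - \D(G) + rT(G)$, so the entries are $(N^{\D})_{vv} = \lambda + t(v)r$ on the diagonal and $(N^{\D})_{vw} = -d(v,w)$ off the diagonal. Writing $\det(N^{\D}) = \sum_{\sigma \in S_n} \sgn(\sigma) \prod_{v \in V} (N^{\D})_{v,\sigma(v)}$, I would split the sum into two parts: those $\sigma$ with $\sigma(u) = u$, and those with $\sigma(u) \neq u$.

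First I would handle the permutations fixing $u$. Since $\sigma(u)=u$, the factor contributed by $u$ is exactly the diagonal entry $(\lambda + t(u)r)$, and the remaining product ranges over a permutation of $V \setminus \{u\}$; summing over all such $\sigma$ gives precisely $(\lambda + t(u)r)\det(N^{\D}_u(G))$, since $N^{\D}_u(G)$ is $N^{\D}$ with the row and column of $u$ deleted. This accounts for the first term on the right-hand side. Next, for permutations with $\sigma(u) \neq u$, I would decompose each such $\sigma$ into its disjoint cycles; exactly one cycle $C$ contains $u$, say of length $k \geq 2$, so $C = (u\ \sigma(u)\ \sigma^2(u)\ \cdots\ \sigma^{k-1}(u))$, and $\sigma$ restricted to $V \setminus V(C)$ is an arbitrary permutation of the complement. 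The off-diagonal factors along the cycle $C$ multiply to $(-1)^k d(u,\sigma(u)) d(\sigma(u),\sigma^2(u)) \cdots d(\sigma^{k-1}(u),u)$, and the sign $\sgn(\sigma)$ factors as $\sgn(C) \cdot \sgn(\sigma|_{\text{rest}}) = (-1)^{k-1}\sgn(\sigma|_{\text{rest}})$. The two signs combine: $(-1)^k \cdot (-1)^{k-1} = (-1)^{2k-1} = -1$, producing the overall minus sign in front of the sum. Summing the contributions of $\sigma|_{\text{rest}}$ over all permutations of $V \setminus V(C)$, weighted by $\sgn(\sigma|_{\text{rest}})$ and the diagonal/off-diagonal entries indexed there, reconstitutes exactly $\det(N^{\D}_{V(\sigma)}(G))$, where the deleted set $V(\sigma) = V(C)$ is the support of the cyclic permutation. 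Thus the second part becomes $-\sum_{\sigma \in \CP(u),\, |\sigma|=k} d(u,\sigma(u)) \cdots d(\sigma^{k-1}(u),u)\, \det(N^{\D}_{V(\sigma)}(G))$, matching the claimed formula.

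The main obstacle — really the only subtle point — is the careful bookkeeping of signs: one must verify that $\sgn(\sigma) = (-1)^{k-1}\sgn(\sigma|_{V\setminus V(C)})$ for a $k$-cycle $C$, and then confirm that this combines with the $(-1)^k$ coming from the $k$ off-diagonal entries $-d(\cdot,\cdot)$ along the cycle to yield precisely the single minus sign stated, independent of $k$. A secondary point worth stating explicitly is the correspondence between cyclic permutations $\sigma \in \CP(u)$ and their vertex supports $V(\sigma)$: each ordered cycle through $u$ is counted once, and in $N^{\D}_{V(\sigma)}(G)$ the determinant over the complementary index set genuinely decouples because $\D$ and $T$ contribute no cross terms between $V(\sigma)$ and its complement in a product indexed by a permutation that stabilizes each block. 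Once the sign computation is pinned down, the rest is a routine reorganization of the Leibniz expansion, entirely parallel to Osborne's argument for $\phi(\lambda,r,G)$, with distances $d(v,w)$ playing the role that the $0/1$ adjacency entries play there (which is why arbitrary cyclic permutations, not just graph cycles, appear: every pair of distinct vertices contributes a nonzero distance).
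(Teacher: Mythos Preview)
Your proposal is correct and follows essentially the same approach as the paper's proof: both use the Leibniz expansion of $\det(N^{\D})$, partition $S_n$ according to whether $\sigma(u)=u$, and for $\sigma(u)\neq u$ factor out the cycle through $u$ with the identical sign bookkeeping $(-1)^{k-1}\cdot(-1)^k=-1$ before recognizing the remaining sum as the minor determinant. Your closing remark explaining why arbitrary cyclic permutations (rather than graph cycles) appear is a clarifying addition not made explicit in the paper, but the argument itself is the same.
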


\bpf
Let the vertices of $G$ be $1=u, 2,\dots,n$ and let $(N(G))_{ij}=n_{ij}$. It is clear
\[n_{ij}=\begin{cases} \lambda + t(i)r & i=j\\ -d(i,j) & \text{else} \end{cases} \]
and $\phi^{\D}(G)=\sum_{\sigma\in S_n} \sgn(\sigma) \prod _{i=1}^{n} n_{i\sigma(i)}$. Partition $S_n$ in to $P_1$ and $P_2$ such that $\sigma\in P_1$ if $\sigma(1)=1$ and otherwise $\sigma\in P_2$. Write $\sigma$ as a product of cycles $\sigma=\sigma_1\sigma_2\dots\sigma_{\ell}$ such that $1\in \sigma_1$. Clearly, \[\phi^{\D}(G)=\sum_{\sigma\in P_1} \sgn(\sigma) \prod _{i=1}^{n} n_{i\sigma(i)}+\sum_{\sigma\in P_2} \sgn(\sigma) \prod _{i=1}^{n} n_{i\sigma(i)}\]  
If $\sigma\in P_1$, then $\sgn(\sigma)=\sgn(\sigma_2\dots\sigma_{\ell})$ and
\begin{eqnarray*}
\sum_{\sigma\in P_1} \sgn(\sigma) \prod _{i=1}^{n} n_{i\sigma(i)} &=& \sum_{\sigma\in P_1} (\lambda + t(1)r) \sgn(\sigma_2\dots\sigma_{\ell})\prod _{i=2}^{n} n_{i\sigma(i)}\\
&=&  (\lambda + t(1)r) \sum_{\sigma\in S_{n-1}} \sgn(\sigma)\prod _{i=2}^{n} n_{i\sigma(i)}\\
&=&  (\lambda + t(1)r) \det(N^{\D}_1(G)).
\end{eqnarray*}
If $\sigma\in P_2$, let $\sigma_1=(1,\sigma(1),\dots,\sigma^{k-1}(1))$. Then
\begin{eqnarray*}
&&\sum_{\sigma\in P_2} \sgn(\sigma) \prod _{i=1}^{n} n_{i\sigma(i)}\\ 
&=&\sum_{\sigma\in P_2} \sgn(\sigma_1) (-1)^k d(1,\sigma(1))d(\sigma(1),\sigma^2(1))\dots d(\sigma^{k-1}(1),1) \sgn(\sigma_2\dots\sigma_k)\prod _{i\in V(\sigma_2\dots\sigma_k)} n_{i\sigma(i)}\\
&=&\sum_{\sigma\in P_2} (-1)^{k-1} (-1)^k d(1,\sigma(1))d(\sigma(1),\sigma^2(1))\dots d(\sigma^{k-1}(1),1) \sgn(\sigma_2\dots\sigma_k)\prod _{i\in V(\sigma_2\dots\sigma_k)} n_{i\sigma(i)}.
\end{eqnarray*}
Fixing $\sigma_1$, consider all other permutations of the remaining vertices. This gives
\begin{eqnarray*}
&&-\sum_{\substack{\sigma\in \CP(1)\\|\sigma|=k}} d(1,\sigma(1))d(\sigma(1),\sigma^2(1))\dots d(\sigma^{k-1}(1),1) \sum_{\tau\in S_{n-|V(\sigma)|}}\sgn(\tau)\prod _{i\in V(\tau)} n_{i\tau(i)}\\
&=&-\sum_{\substack{\sigma\in \CP(1)\\|\sigma|=k}} d(1,\sigma(1))d(\sigma(1),\sigma^2(1))\dots d(\sigma^{k-1}(1),1) \det(N_{V(\sigma)} (G)).
\end{eqnarray*}
\epf

We now shift our focus back to the standard characteristic polynomial. Methods of computing characteristic polynomials have been found for various matrices associated with graphs. Such a method was found for the weighted normalized Laplacian in $\cite{BH16}$ and is given below. A decomposition $D$ of an undirected weighted graph $G$ is a subgraph consisting of disjoint edges and cycles. Let $s(D)$ denote the number of cycles of length at least three in $D$, let $e(D)$ denote the number of cycles in $D$ that have an even number of vertices (here, consider an edge to be a cycle of length two), and let $F(D)$ be the set of isolated edges in the decomposition. Note a decomposition need not be spanning, and in fact the empty decomposition is included. 

\begin{thm}\label{charpoly}{\em \cite{BH16}}
Let $G$ be a weighted graph on $n$ vertices. Then the characteristic polynomial of the normalized Laplacian matrix is
\[p(x)=\sum_{D} (-1)^{e(D)} 2^{s(D)} \frac{\prod _{v_iv_j\in E(D)} w(v_i,v_j) \prod_{v_iv_j\in F(D)} w(v_i,v_j)}{\prod_{v_i\in V(D)} \deg(v_i)} (x-1)^{n-|V(D)|}\]
where the sum runs over all decompositions $D$ of the graph $G$. 
\end{thm}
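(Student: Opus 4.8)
The plan is to expand $\det(\mathcal{L}(G) - xI_n) = \pm p(x)$ directly via the permutation expansion of the determinant, exactly as in the adjacency-matrix analogue, but keeping careful track of the normalizing factors $1/\sqrt{\deg(v_i)\deg(v_j)}$ that appear in the off-diagonal entries of $\mathcal{L}(G)$. Write $\mathcal{L}(G) - xI_n = (1-x)I_n - D(G)^{-1/2}A(G)D(G)^{-1/2}$ and recall $\det(B) = \sum_{\sigma \in S_n} \sgn(\sigma) \prod_{i=1}^n B_{i\sigma(i)}$. Each permutation $\sigma$ decomposes into disjoint cycles; a fixed point $i$ of $\sigma$ contributes the diagonal entry $(1-x) - w(v_i,v_i)/\deg(v_i)$, which (since $G$ is simple) is just $(1-x)$, while a nontrivial cycle $(i_1\, i_2\,\cdots\, i_\ell)$ of $\sigma$ contributes $\prod_{t} \bigl(-w(v_{i_t},v_{i_{t+1}})/\sqrt{\deg(v_{i_t})\deg(v_{i_{t+1}})}\bigr)$. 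Because the cycle visits each of its vertices exactly once, the product of the square-root denominators telescopes to $\prod_{i_t} \deg(v_{i_t})$ with no leftover square roots, and the numerators give $\prod w(v_{i_t},v_{i_{t+1}})$ around the cycle.

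The second step is to group permutations according to which nontrivial cycles they contain and to reconcile this with the notion of a decomposition $D$ (disjoint edges and cycles of length $\ge 3$). The subtlety is that a $2$-cycle $(i\,j)$ of a permutation and a transposition correspond to a single undirected edge $v_iv_j$, but a cycle of length $\ell \ge 3$ in $S_n$ corresponds to a \emph{directed} traversal of an undirected $\ell$-cycle, and there are exactly two such directed traversals (clockwise and counterclockwise) giving the same undirected cycle and the same product of weights. This is the source of the factor $2^{s(D)}$: each undirected cycle of length $\ge 3$ in the decomposition can be realized by two permutation-cycles. Edges contribute no such factor. One must also check the signs: a permutation whose nontrivial part is a product of cycles of lengths $\ell_1,\dots,\ell_k$ on $m = \sum \ell_j$ moved points has sign $(-1)^{m-k}$, and combining this with the $(-1)$ from each of the $m$ off-diagonal entries' minus signs leaves $(-1)^{m - (m-k)} = (-1)^k$; writing $k$ in terms of the number of even-length cycles (each even cycle flips the sign, each odd cycle does not, relative to an all-edges baseline) produces $(-1)^{e(D)}$ after bookkeeping. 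The remaining $n - |V(D)|$ unmoved vertices each contribute a factor $(1-x) = -(x-1)$, but the total number of such factors is $n-|V(D)|$ and collecting all the stray global signs is what ultimately yields $(x-1)^{n-|V(D)|}$ (up to the overall $(-1)^n$ that relates $\det(\mathcal{L}-xI)$ to $p(x)=\det(xI-\mathcal{L})$).

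The third step is to observe that the $\deg(v_i)$ appearing in a cycle's denominator, together with the $\deg$'s from all other cycles in the same permutation, exactly account for $\prod_{v_i \in V(D)} \deg(v_i)$, since the cycles of a permutation are vertex-disjoint; this is why the denominator in Theorem \ref{charpoly} ranges over $V(D)$ and not over edges. Finally, sum over all decompositions $D$, noting that the empty decomposition corresponds to the identity permutation and gives the leading term $(x-1)^n$. The main obstacle I anticipate is purely organizational: getting the three independent sign contributions (the $\sgn(\sigma)$, the $(-1)$ per off-diagonal entry, and the $(-1)^{n-|V(D)|}$ from rewriting $(1-x)$ as $-(x-1)$) to collapse cleanly into the single advertised $(-1)^{e(D)}$ with the correct global normalization — nothing here is deep, but it is exactly the kind of computation where an uncanceled sign is easy to introduce. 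A clean way to manage this is to first prove the formula for $p(x) = \det(xI_n - \mathcal{L}(G))$ with all signs tracked symbolically in terms of cycle lengths, then specialize the parity argument. One should also explicitly note that the simple-graph hypothesis is used to kill the diagonal weight terms, and that $G$ having no isolated vertices is needed for $D(G)^{-1/2}$ to exist (this is implicit in $\mathcal{L}(G)$ being defined).
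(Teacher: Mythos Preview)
The paper does not prove this theorem; it is quoted from \cite{BH16} and then immediately applied (to the weighted complete graph $W(G)$) to obtain the corollary for $\NDL$. So there is no ``paper's own proof'' to compare against. Your permutation-expansion approach is the standard one and is correct in outline: expand $\det\bigl((1-x)I - D^{-1/2}AD^{-1/2}\bigr)$, observe that along any permutation cycle the square-root denominators pair up to give $\prod_{v\in V(D)}\deg(v)$, and group permutations by their underlying undirected decomposition $D$, picking up the factor $2^{s(D)}$ from the two orientations of each cycle of length $\ge 3$ and the doubled edge-weight $w(v_iv_j)^2$ from each transposition (accounting for the extra $\prod_{F(D)}$ factor).

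One point to tighten: your sign bookkeeping is slightly garbled at the step ``leaves $(-1)^k$; writing $k$ in terms of the number of even-length cycles \dots\ produces $(-1)^{e(D)}$.'' The quantity $(-1)^k$ by itself is \emph{not} $(-1)^{e(D)}$. What actually happens is that after multiplying in the global $(-1)^n$ (from $p(x)=(-1)^n\det(\mathcal L - xI)$) and the $(-1)^{n-m}$ coming from $(1-x)^{n-m}=(-1)^{n-m}(x-1)^{n-m}$, the net sign is $(-1)^{k-m}=(-1)^{-\sum_j(\ell_j-1)}$, and it is \emph{this} exponent, $\sum_j(\ell_j-1)$, that has the same parity as the number of even-length $\ell_j$'s, yielding $(-1)^{e(D)}$. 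You clearly anticipated that the stray signs would resolve this, but when you write it up do the three sign sources in one clean line rather than sequentially; as written the intermediate claim about $(-1)^k$ alone is false.
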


Applying this result to the normalized distance Laplacian viewed as a weighted normalized Laplacian, we obtain the following formula for computing its characteristic formula. Note $w_{W(G)}(v_i,v_j)=d_G(v_i,v_j)$ and $\deg_{W(G)}(v_i)=\tr_G(v_i)$.
\begin{cor}
Let $G$ be a graph on $n$ vertices. Then the characteristic polynomial of the normalized distance Laplacian matrix is
\[p(x)=\sum_{D} (-1)^{e(D)} 2^{s(D)} (x-1)^{n-|V(D)|} \frac{\prod _{v_iv_j\in E(D)} d_{G}(v_i,v_j) \prod_{v_iv_j\in F(D)} d_G(v_i,v_j)}{\prod_{v_i\in V(D)} \tr_G(v_i)} (x-1)^{n-|V(D)|}\]
where the sum runs over all decompositions $D$ of the complete graph $K_n$. 
\end{cor}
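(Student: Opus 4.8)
The plan is to recognize the normalized distance Laplacian as a genuine instance of the \emph{weighted} normalized Laplacian and then invoke Theorem \ref{charpoly} directly. First I would recall the observation already made in Section \ref{sec:NormDistLap}: for a connected graph $G$ on $n\geq 2$ vertices, $\NDL(G)=\cL(W(G))$, where $W(G)$ is the complete graph on the vertex set $V(G)$ equipped with the edge-weight function $w_{W(G)}(v_i,v_j)=d_G(v_i,v_j)$. The only thing to verify at this stage is that $W(G)$ is a legitimate weighted graph with no isolated vertices so that Theorem \ref{charpoly} applies; this is immediate, since connectivity of $G$ makes every pairwise distance $d_G(v_i,v_j)$ a positive integer. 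In particular every one of the $\binom{n}{2}$ edges of $W(G)$ is present, so the underlying unweighted graph of $W(G)$ is exactly $K_n$.

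Next I would apply Theorem \ref{charpoly} to $W(G)$ and translate the data. The sum in that theorem runs over all decompositions $D$ of $W(G)$ into disjoint edges and cycles; because all weights of $W(G)$ are positive, these decompositions coincide with the decompositions of $K_n$, which accounts for the change of the index set of the sum. The combinatorial quantities $e(D)$, $s(D)$, and $n-|V(D)|$ depend only on the structure of $D$ and are therefore unchanged. It then remains only to substitute $w_{W(G)}(v_i,v_j)=d_G(v_i,v_j)$ for every edge $v_iv_j$ occurring in $E(D)$ or in the set of isolated edges $F(D)$, and $\deg_{W(G)}(v_i)=\tr_G(v_i)$ for every $v_i\in V(D)$. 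Performing this substitution in the formula of Theorem \ref{charpoly} yields precisely the claimed expression for $p(x)$.

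I expect no genuine mathematical obstacle here; the content is entirely bookkeeping, and the one point that warrants care is making sure the notion of ``decomposition'' in Theorem \ref{charpoly} refers to the underlying graph, so that passing from decompositions of $W(G)$ to decompositions of $K_n$ is exactly a bijection, and so that the isolated-edge term $F(D)$ is specialized consistently with the $E(D)$ term. Since the single-edge components of a decomposition are precisely its isolated edges, this correspondence is unambiguous, and the corollary follows as a direct specialization of Theorem \ref{charpoly}.
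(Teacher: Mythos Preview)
Your proposal is correct and follows essentially the same approach as the paper: the paper simply notes that $\NDL(G)=\cL(W(G))$ with $w_{W(G)}(v_i,v_j)=d_G(v_i,v_j)$ and $\deg_{W(G)}(v_i)=\tr_G(v_i)$, and applies Theorem \ref{charpoly} directly. Your additional remarks about why $W(G)$ has no isolated vertices and why decompositions of $W(G)$ coincide with those of $K_n$ are helpful clarifications but do not deviate from the paper's route.
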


\section{Cospectral graphs}\label{sec:Cospec}
In this section, we show cospectral graphs with respect to the normalized distance Laplacian are rare. In Section \ref{sec:CospecPairs} we exhibit and discuss the 5 cospectral pairs on 8 and 9 vertices as well as interesting examples of cospectral pairs on 10 vertices. We show the number of edges in a graph, degree sequence, transmission sequence, girth, Weiner index, planarity, $k$-regularity, and $k$-transmission regularity are not preserved by $\NDL$-cospectrality. We compare $\NDL$-cospectralitity with $\DL$-cospectrality and provide examples of pairs of graphs that are both $\NDL$-cospectral and $\DL$-cospectral, as well as pairs only cospectral with respect to one of the matrices. We also exhibit graphs that are cospectral only for $\NDL$ and graphs that are cospectral for all matrices $A,L,Q,\cL,\D,\DL,\DQ,\NDL$. In Section \ref{sec:cospecCompare}, the number of graphs on 10 or fewer vertices with a $\NDL$-cospectral pair is computed and compared with the number of graphs with a $M$-cospectral pair, where $M=A,L,Q,\cL,\D,\DL,\DQ$. That section also includes a discussion of computational methods.

\subsection{Cospectral pairs on 10 or fewer vertices}\label{sec:CospecPairs}
The first instance of cospectral graphs with respect to the normalized distance Laplacian occurs on 8 vertices and there is only one such pair, shown in Figure \ref{fig:cospec8}. Using {\em Sage} \cite{Sage2} we can compute their $\NDL$ characteristic polynomial: $p_{\NDL}(x)=x^8-8x^7+\frac{317947}{11616}x^6- \frac{5428399}{104544}x^5+\frac{24668087}{418176}x^4-\frac{4196075}{104544}x^3+\frac{575771}{38016}x^2-\frac{85211}{34848}x$. The only difference between the graphs is the {\red light} colored edge, and we refer to the maximal shared subgraph (i.e. the graph that results in removing the {\red light} colored edge from either graph) as the {\em base graph}.
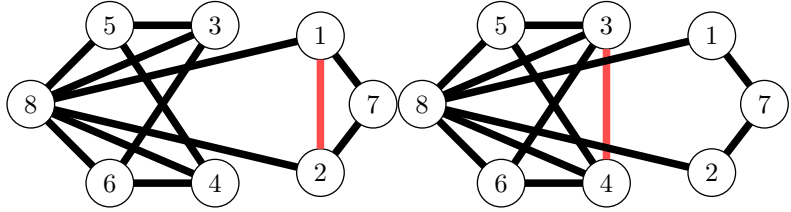
\begin{figure}[h!]
\begin{center}
\begin{tikzpicture}[scale=0.7]
\definecolor{cv0}{rgb}{0.0,0.0,0.0}
\definecolor{cfv0}{rgb}{1.0,1.0,1.0}
\definecolor{clv0}{rgb}{0.0,0.0,0.0}
\definecolor{cv1}{rgb}{0.0,0.0,0.0}
\definecolor{cfv1}{rgb}{1.0,1.0,1.0}
\definecolor{clv1}{rgb}{0.0,0.0,0.0}
\definecolor{cv2}{rgb}{0.0,0.0,0.0}
\definecolor{cfv2}{rgb}{1.0,1.0,1.0}
\definecolor{clv2}{rgb}{0.0,0.0,0.0}
\definecolor{cv3}{rgb}{0.0,0.0,0.0}
\definecolor{cfv3}{rgb}{1.0,1.0,1.0}
\definecolor{clv3}{rgb}{0.0,0.0,0.0}
\definecolor{cv4}{rgb}{0.0,0.0,0.0}
\definecolor{cfv4}{rgb}{1.0,1.0,1.0}
\definecolor{clv4}{rgb}{0.0,0.0,0.0}
\definecolor{cv5}{rgb}{0.0,0.0,0.0}
\definecolor{cfv5}{rgb}{1.0,1.0,1.0}
\definecolor{clv5}{rgb}{0.0,0.0,0.0}
\definecolor{cv6}{rgb}{0.0,0.0,0.0}
\definecolor{cfv6}{rgb}{1.0,1.0,1.0}
\definecolor{clv6}{rgb}{0.0,0.0,0.0}
\definecolor{cv7}{rgb}{0.0,0.0,0.0}
\definecolor{cfv7}{rgb}{1.0,1.0,1.0}
\definecolor{clv7}{rgb}{0.0,0.0,0.0}
\definecolor{cv0v1}{rgb}{0.0,0.0,0.0}
\definecolor{cv0v3}{rgb}{0.0,0.0,0.0}
\definecolor{cv0v6}{rgb}{0.0,0.0,0.0}
\definecolor{cv1v3}{rgb}{0.0,0.0,0.0}
\definecolor{cv1v6}{rgb}{0.0,0.0,0.0}
\definecolor{cv2v3}{rgb}{0.0,0.0,0.0}
\definecolor{cv2v5}{rgb}{0.0,0.0,0.0}
\definecolor{cv2v7}{rgb}{0.0,0.0,0.0}
\definecolor{cv3v4}{rgb}{0.0,0.0,0.0}
\definecolor{cv3v5}{rgb}{0.0,0.0,0.0}
\definecolor{cv3v7}{rgb}{0.0,0.0,0.0}
\definecolor{cv4v5}{rgb}{0.0,0.0,0.0}
\definecolor{cv4v7}{rgb}{0.0,0.0,0.0}
\Vertex[style={minimum
size=1.0cm,draw=cv0,fill=cfv0,text=clv0,shape=circle},LabelOut=false,L=\hbox{$2$},x=4cm,y=.2cm]{v0}
\Vertex[style={minimum
size=1.0cm,draw=cv1,fill=cfv1,text=clv1,shape=circle},LabelOut=false,L=\hbox{$1$},x=4cm,y=2.8cm]{v1}
\Vertex[style={minimum
size=1.0cm,draw=cv2,fill=cfv2,text=clv2,shape=circle},LabelOut=false,L=\hbox{$6$},x=0cm,y=0cm]{v2}
\Vertex[style={minimum
size=1.0cm,draw=cv3,fill=cfv3,text=clv3,shape=circle},LabelOut=false,L=\hbox{$8$},x=-1.5cm,y=1.5cm]{v3}
\Vertex[style={minimum
size=1.0cm,draw=cv4,fill=cfv4,text=clv4,shape=circle},LabelOut=false,L=\hbox{$5$},x=0cm,y=3cm]{v4}
\Vertex[style={minimum
size=1.0cm,draw=cv5,fill=cfv5,text=clv5,shape=circle},LabelOut=false,L=\hbox{$4$},x=2,y=0.0cm]{v5}
\Vertex[style={minimum
size=1.0cm,draw=cv6,fill=cfv6,text=clv6,shape=circle},LabelOut=false,L=\hbox{$7$},x=5cm,y=1.5cm]{v6}
\Vertex[style={minimum
size=1.0cm,draw=cv7,fill=cfv7,text=clv7,shape=circle},LabelOut=false,L=\hbox{$3$},x=2cm,y=3cm]{v7}
\Edge[lw=0.1cm,style={color=red!70,},](v0)(v1)
\Edge[lw=0.1cm,style={color=cv0v3,},](v0)(v3)
\Edge[lw=0.1cm,style={color=cv0v6,},](v0)(v6)
\Edge[lw=0.1cm,style={color=cv1v3,},](v1)(v3)
\Edge[lw=0.1cm,style={color=cv1v6,},](v1)(v6)
\Edge[lw=0.1cm,style={color=cv2v3,},](v2)(v3)
\Edge[lw=0.1cm,style={color=cv2v5,},](v2)(v5)
\Edge[lw=0.1cm,style={color=cv2v7,},](v2)(v7)
\Edge[lw=0.1cm,style={color=cv3v4,},](v3)(v4)
\Edge[lw=0.1cm,style={color=cv3v5,},](v3)(v5)
\Edge[lw=0.1cm,style={color=cv3v7,},](v3)(v7)
\Edge[lw=0.1cm,style={color=cv4v5,},](v4)(v5)
\Edge[lw=0.1cm,style={color=cv4v7,},](v4)(v7)
\end{tikzpicture}
\begin{tikzpicture}[scale=0.7]
\definecolor{cv0}{rgb}{0.0,0.0,0.0}
\definecolor{cfv0}{rgb}{1.0,1.0,1.0}
\definecolor{clv0}{rgb}{0.0,0.0,0.0}
\definecolor{cv1}{rgb}{0.0,0.0,0.0}
\definecolor{cfv1}{rgb}{1.0,1.0,1.0}
\definecolor{clv1}{rgb}{0.0,0.0,0.0}
\definecolor{cv2}{rgb}{0.0,0.0,0.0}
\definecolor{cfv2}{rgb}{1.0,1.0,1.0}
\definecolor{clv2}{rgb}{0.0,0.0,0.0}
\definecolor{cv3}{rgb}{0.0,0.0,0.0}
\definecolor{cfv3}{rgb}{1.0,1.0,1.0}
\definecolor{clv3}{rgb}{0.0,0.0,0.0}
\definecolor{cv4}{rgb}{0.0,0.0,0.0}
\definecolor{cfv4}{rgb}{1.0,1.0,1.0}
\definecolor{clv4}{rgb}{0.0,0.0,0.0}
\definecolor{cv5}{rgb}{0.0,0.0,0.0}
\definecolor{cfv5}{rgb}{1.0,1.0,1.0}
\definecolor{clv5}{rgb}{0.0,0.0,0.0}
\definecolor{cv6}{rgb}{0.0,0.0,0.0}
\definecolor{cfv6}{rgb}{1.0,1.0,1.0}
\definecolor{clv6}{rgb}{0.0,0.0,0.0}
\definecolor{cv7}{rgb}{0.0,0.0,0.0}
\definecolor{cfv7}{rgb}{1.0,1.0,1.0}
\definecolor{clv7}{rgb}{0.0,0.0,0.0}
\definecolor{cv0v1}{rgb}{0.0,0.0,0.0}
\definecolor{cv0v3}{rgb}{0.0,0.0,0.0}
\definecolor{cv0v6}{rgb}{0.0,0.0,0.0}
\definecolor{cv1v3}{rgb}{0.0,0.0,0.0}
\definecolor{cv1v6}{rgb}{0.0,0.0,0.0}
\definecolor{cv2v3}{rgb}{0.0,0.0,0.0}
\definecolor{cv2v5}{rgb}{0.0,0.0,0.0}
\definecolor{cv2v7}{rgb}{0.0,0.0,0.0}
\definecolor{cv3v4}{rgb}{0.0,0.0,0.0}
\definecolor{cv3v5}{rgb}{0.0,0.0,0.0}
\definecolor{cv3v7}{rgb}{0.0,0.0,0.0}
\definecolor{cv4v5}{rgb}{0.0,0.0,0.0}
\definecolor{cv4v7}{rgb}{0.0,0.0,0.0}
\Vertex[style={minimum
size=1.0cm,draw=cv0,fill=cfv0,text=clv0,shape=circle},LabelOut=false,L=\hbox{$2$},x=4cm,y=.2cm]{v0}
\Vertex[style={minimum
size=1.0cm,draw=cv1,fill=cfv1,text=clv1,shape=circle},LabelOut=false,L=\hbox{$1$},x=4cm,y=2.8cm]{v1}
\Vertex[style={minimum
size=1.0cm,draw=cv2,fill=cfv2,text=clv2,shape=circle},LabelOut=false,L=\hbox{$6$},x=0cm,y=0cm]{v2}
\Vertex[style={minimum
size=1.0cm,draw=cv3,fill=cfv3,text=clv3,shape=circle},LabelOut=false,L=\hbox{$8$},x=-1.5cm,y=1.5cm]{v3}
\Vertex[style={minimum
size=1.0cm,draw=cv4,fill=cfv4,text=clv4,shape=circle},LabelOut=false,L=\hbox{$5$},x=0cm,y=3cm]{v4}
\Vertex[style={minimum
size=1.0cm,draw=cv5,fill=cfv5,text=clv5,shape=circle},LabelOut=false,L=\hbox{$4$},x=2,y=0.0cm]{v5}
\Vertex[style={minimum
size=1.0cm,draw=cv6,fill=cfv6,text=clv6,shape=circle},LabelOut=false,L=\hbox{$7$},x=5cm,y=1.5cm]{v6}
\Vertex[style={minimum
size=1.0cm,draw=cv7,fill=cfv7,text=clv7,shape=circle},LabelOut=false,L=\hbox{$3$},x=2cm,y=3cm]{v7}
\Edge[lw=0.1cm,style={color=red!70,},](v7)(v5)
\Edge[lw=0.1cm,style={color=cv0v3,},](v0)(v3)
\Edge[lw=0.1cm,style={color=cv0v6,},](v0)(v6)
\Edge[lw=0.1cm,style={color=cv1v3,},](v1)(v3)
\Edge[lw=0.1cm,style={color=cv1v6,},](v1)(v6)
\Edge[lw=0.1cm,style={color=cv2v3,},](v2)(v3)
\Edge[lw=0.1cm,style={color=cv2v5,},](v2)(v5)
\Edge[lw=0.1cm,style={color=cv2v7,},](v2)(v7)
\Edge[lw=0.1cm,style={color=cv3v4,},](v3)(v4)
\Edge[lw=0.1cm,style={color=cv3v5,},](v3)(v5)
\Edge[lw=0.1cm,style={color=cv3v7,},](v3)(v7)
\Edge[lw=0.1cm,style={color=cv4v5,},](v4)(v5)
\Edge[lw=0.1cm,style={color=cv4v7,},](v4)(v7)
\end{tikzpicture}
\end{center}
\caption{The only $\NDL$-cospectral pair on 8 vertices}
\label{fig:cospec8}
\end{figure}


If a graph $G$ has vertices $v_1,v_2,v_3,$ and $v_4$ such that $v_1$ and $v_2$ are non-adjacent twins, $v_3$ and $v_4$ are non-adjacent twins, and $\tr(v_1)=\tr(v_3)$ then we say $\{\{v_1,v_2\},\{v_3,v_4\}\}$ is a set of {\em co-transmission twins}. In \cite{BDHLRSY19}, a cospectral construction is described for the distance Laplacian using co-transmission twins. If a graph $G$ has co-transmission twins $\{\{v_1,v_2\},\{v_3,v_4\}\}$, then $G+v_1v_2$ and $G+v_3v_4$ are $\DL$-cospectral. Note that the graphs in Figure \ref{fig:cospec8} can be constructed this way from their base graph with co-transmission twins $\{\{1,2\},\{3,4\}\}$, so they are $\DL$-cospectral as well. Their $\DL$ characteristic polynomial is $p_{\DL}(x)=x^8 - 94x^7 + 3756x^6 - 82728x^5 + 1084992x^4 - 8473984x^3 +
36492288x^2 - 66834432x$. However, this construction does not always find $\NDL$-cospectral graphs. The graphs in Figure \ref{fig:notcospec} can be constructed from their base graph using co-transmission twins $\{\{1,2\},\{3,4\}\}$, so they are cospectral with respect to $\DL$, but they are not cospectral with respect to $\NDL$. 


\begin{figure}[ht]
    \centering
\begin{tikzpicture}[scale=0.7]
\definecolor{cv0}{rgb}{0.0,0.0,0.0}
\definecolor{cfv0}{rgb}{1.0,1.0,1.0}
\definecolor{clv0}{rgb}{0.0,0.0,0.0}
\definecolor{cv1}{rgb}{0.0,0.0,0.0}
\definecolor{cfv1}{rgb}{1.0,1.0,1.0}
\definecolor{clv1}{rgb}{0.0,0.0,0.0}
\definecolor{cv2}{rgb}{0.0,0.0,0.0}
\definecolor{cfv2}{rgb}{1.0,1.0,1.0}
\definecolor{clv2}{rgb}{0.0,0.0,0.0}
\definecolor{cv3}{rgb}{0.0,0.0,0.0}
\definecolor{cfv3}{rgb}{1.0,1.0,1.0}
\definecolor{clv3}{rgb}{0.0,0.0,0.0}
\definecolor{cv4}{rgb}{0.0,0.0,0.0}
\definecolor{cfv4}{rgb}{1.0,1.0,1.0}
\definecolor{clv4}{rgb}{0.0,0.0,0.0}
\definecolor{cv5}{rgb}{0.0,0.0,0.0}
\definecolor{cfv5}{rgb}{1.0,1.0,1.0}
\definecolor{clv5}{rgb}{0.0,0.0,0.0}
\definecolor{cv6}{rgb}{0.0,0.0,0.0}
\definecolor{cfv6}{rgb}{1.0,1.0,1.0}
\definecolor{clv6}{rgb}{0.0,0.0,0.0}
\definecolor{cv7}{rgb}{0.0,0.0,0.0}
\definecolor{cfv7}{rgb}{1.0,1.0,1.0}
\definecolor{clv7}{rgb}{0.0,0.0,0.0}
\definecolor{cv0v1}{rgb}{0.0,0.0,0.0}
\definecolor{cv1v2}{rgb}{0.0,0.0,0.0}
\definecolor{cv1v3}{rgb}{0.0,0.0,0.0}
\definecolor{cv2v3}{rgb}{0.0,0.0,0.0}
\definecolor{cv2v4}{rgb}{0.0,0.0,0.0}
\definecolor{cv2v5}{rgb}{0.0,0.0,0.0}
\definecolor{cv3v4}{rgb}{0.0,0.0,0.0}
\definecolor{cv3v5}{rgb}{0.0,0.0,0.0}
\definecolor{cv4v6}{rgb}{0.0,0.0,0.0}
\definecolor{cv4v7}{rgb}{0.0,0.0,0.0}
\definecolor{cv5v6}{rgb}{0.0,0.0,0.0}
\definecolor{cv5v7}{rgb}{0.0,0.0,0.0}
\Vertex[style={minimum
size=1.0cm,draw=cv0,fill=cfv0,text=clv0,shape=circle},LabelOut=false,L=\hbox{$8$},x=7cm,y=1cm]{v0}
\Vertex[style={minimum
size=1.0cm,draw=cv1,fill=cfv1,text=clv1,shape=circle},LabelOut=false,L=\hbox{$7$},x=5.5cm,y=1cm]{v1}
\Vertex[style={minimum
size=1.0cm,draw=cv2,fill=cfv2,text=clv2,shape=circle},LabelOut=false,L=\hbox{$2$},x=4cm,y=2cm]{v2}
\Vertex[style={minimum
size=1.0cm,draw=cv3,fill=cfv3,text=clv3,shape=circle},LabelOut=false,L=\hbox{$1$},x=4cm,y=0cm]{v3}
\Vertex[style={minimum
size=1.0cm,draw=cv4,fill=cfv4,text=clv4,shape=circle},LabelOut=false,L=\hbox{$4$},x=2cm,y=2cm]{v4}
\Vertex[style={minimum
size=1.0cm,draw=cv5,fill=cfv5,text=clv5,shape=circle},LabelOut=false,L=\hbox{$3$},x=2cm,y=0cm]{v5}
\Vertex[style={minimum
size=1.0cm,draw=cv6,fill=cfv6,text=clv6,shape=circle},LabelOut=false,L=\hbox{$6$},x=0.0cm,y=2cm]{v6}
\Vertex[style={minimum
size=1.0cm,draw=cv7,fill=cfv7,text=clv7,shape=circle},LabelOut=false,L=\hbox{$5$},x=0cm,y=0cm]{v7}
\Edge[lw=0.1cm,style={color=cv0v1,},](v0)(v1)
\Edge[lw=0.1cm,style={color=cv1v2,},](v1)(v2)
\Edge[lw=0.1cm,style={color=cv1v3,},](v1)(v3)
\Edge[lw=0.1cm,style={color=red!70,},](v2)(v3)
\Edge[lw=0.1cm,style={color=cv2v4,},](v2)(v4)
\Edge[lw=0.1cm,style={color=cv2v5,},](v2)(v5)
\Edge[lw=0.1cm,style={color=cv3v4,},](v3)(v4)
\Edge[lw=0.1cm,style={color=cv3v5,},](v3)(v5)
\Edge[lw=0.1cm,style={color=cv4v6,},](v4)(v6)
\Edge[lw=0.1cm,style={color=cv4v7,},](v4)(v7)
\Edge[lw=0.1cm,style={color=cv5v6,},](v5)(v6)
\Edge[lw=0.1cm,style={color=cv5v7,},](v5)(v7)
\end{tikzpicture}
\begin{tikzpicture}[scale=0.7]
\definecolor{cv0}{rgb}{0.0,0.0,0.0}
\definecolor{cfv0}{rgb}{1.0,1.0,1.0}
\definecolor{clv0}{rgb}{0.0,0.0,0.0}
\definecolor{cv1}{rgb}{0.0,0.0,0.0}
\definecolor{cfv1}{rgb}{1.0,1.0,1.0}
\definecolor{clv1}{rgb}{0.0,0.0,0.0}
\definecolor{cv2}{rgb}{0.0,0.0,0.0}
\definecolor{cfv2}{rgb}{1.0,1.0,1.0}
\definecolor{clv2}{rgb}{0.0,0.0,0.0}
\definecolor{cv3}{rgb}{0.0,0.0,0.0}
\definecolor{cfv3}{rgb}{1.0,1.0,1.0}
\definecolor{clv3}{rgb}{0.0,0.0,0.0}
\definecolor{cv4}{rgb}{0.0,0.0,0.0}
\definecolor{cfv4}{rgb}{1.0,1.0,1.0}
\definecolor{clv4}{rgb}{0.0,0.0,0.0}
\definecolor{cv5}{rgb}{0.0,0.0,0.0}
\definecolor{cfv5}{rgb}{1.0,1.0,1.0}
\definecolor{clv5}{rgb}{0.0,0.0,0.0}
\definecolor{cv6}{rgb}{0.0,0.0,0.0}
\definecolor{cfv6}{rgb}{1.0,1.0,1.0}
\definecolor{clv6}{rgb}{0.0,0.0,0.0}
\definecolor{cv7}{rgb}{0.0,0.0,0.0}
\definecolor{cfv7}{rgb}{1.0,1.0,1.0}
\definecolor{clv7}{rgb}{0.0,0.0,0.0}
\definecolor{cv0v1}{rgb}{0.0,0.0,0.0}
\definecolor{cv1v2}{rgb}{0.0,0.0,0.0}
\definecolor{cv1v3}{rgb}{0.0,0.0,0.0}
\definecolor{cv2v3}{rgb}{0.0,0.0,0.0}
\definecolor{cv2v4}{rgb}{0.0,0.0,0.0}
\definecolor{cv2v5}{rgb}{0.0,0.0,0.0}
\definecolor{cv3v4}{rgb}{0.0,0.0,0.0}
\definecolor{cv3v5}{rgb}{0.0,0.0,0.0}
\definecolor{cv4v6}{rgb}{0.0,0.0,0.0}
\definecolor{cv4v7}{rgb}{0.0,0.0,0.0}
\definecolor{cv5v6}{rgb}{0.0,0.0,0.0}
\definecolor{cv5v7}{rgb}{0.0,0.0,0.0}
\Vertex[style={minimum
size=1.0cm,draw=cv0,fill=cfv0,text=clv0,shape=circle},LabelOut=false,L=\hbox{$8$},x=7cm,y=1cm]{v0}
\Vertex[style={minimum
size=1.0cm,draw=cv1,fill=cfv1,text=clv1,shape=circle},LabelOut=false,L=\hbox{$7$},x=5.5cm,y=1cm]{v1}
\Vertex[style={minimum
size=1.0cm,draw=cv2,fill=cfv2,text=clv2,shape=circle},LabelOut=false,L=\hbox{$2$},x=4cm,y=2cm]{v2}
\Vertex[style={minimum
size=1.0cm,draw=cv3,fill=cfv3,text=clv3,shape=circle},LabelOut=false,L=\hbox{$1$},x=4cm,y=0cm]{v3}
\Vertex[style={minimum
size=1.0cm,draw=cv4,fill=cfv4,text=clv4,shape=circle},LabelOut=false,L=\hbox{$4$},x=2cm,y=2cm]{v4}
\Vertex[style={minimum
size=1.0cm,draw=cv5,fill=cfv5,text=clv5,shape=circle},LabelOut=false,L=\hbox{$3$},x=2cm,y=0cm]{v5}
\Vertex[style={minimum
size=1.0cm,draw=cv6,fill=cfv6,text=clv6,shape=circle},LabelOut=false,L=\hbox{$6$},x=0.0cm,y=2cm]{v6}
\Vertex[style={minimum
size=1.0cm,draw=cv7,fill=cfv7,text=clv7,shape=circle},LabelOut=false,L=\hbox{$5$},x=0cm,y=0cm]{v7}
\Edge[lw=0.1cm,style={color=cv0v1,},](v0)(v1)
\Edge[lw=0.1cm,style={color=cv1v2,},](v1)(v2)
\Edge[lw=0.1cm,style={color=cv1v3,},](v1)(v3)
\Edge[lw=0.1cm,style={color=red!70,},](v4)(v5)
\Edge[lw=0.1cm,style={color=cv2v4,},](v2)(v4)
\Edge[lw=0.1cm,style={color=cv2v5,},](v2)(v5)
\Edge[lw=0.1cm,style={color=cv3v4,},](v3)(v4)
\Edge[lw=0.1cm,style={color=cv3v5,},](v3)(v5)
\Edge[lw=0.1cm,style={color=cv4v6,},](v4)(v6)
\Edge[lw=0.1cm,style={color=cv4v7,},](v4)(v7)
\Edge[lw=0.1cm,style={color=cv5v6,},](v5)(v6)
\Edge[lw=0.1cm,style={color=cv5v7,},](v5)(v7)
\end{tikzpicture}
    \caption{Graphs that are $\DL$ but not $\NDL$-cospectral using the co-transmission twins construction}
    \label{fig:notcospec}
\end{figure}
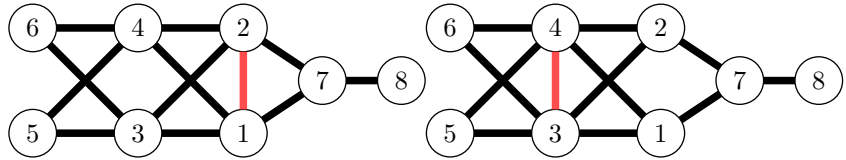

There are only four pairs of $\NDL$-cospectral graphs on 9 vertices. Three of the pairs that are $\NDL$-cospectral differ by only one edge and have related base graphs. In Figure \ref{fig:3pairscospec9}, the three pairs can be seen by including 0, 1, or 2 of the dashed edges $\{1,4\}$ and $\{2,3\}$ (note including just the edge $\{1,4\}$ or just the edge $\{2,3\}$ creates isomorphic graphs, so we only need consider one of these cases). When $0$ dashed edges are included, their $\NDL$ characteristic polynomial is $p_{\NDL}(x)=x^9 - 9x^8 + \frac{1926013}{54450}x^7 - \frac{259072321}{3267000}x^6 +
\frac{2717888893}{24502500}x^5 -\frac{233194363}{2352240}x^4 +
\frac{243851297233}{4410450000}x^3 - \frac{587831111}{33412500}x^2 +
\frac{674126228}{275653125}x$. When 1 dashed edge is included, their $\NDL$ characteristic polynomial is $p_{\NDL}(x)=x^9 - 9x^8 + \frac{1926211}{54450}x^7 - \frac{9598153}{121000}x^6 +
\frac{1812984073}{16335000}x^5 - \frac{24314025553}{245025000}x^4 +\frac{67829453381}{1225125000}x^3 - \frac{10796929657}{612562500}x^2 + \frac{758404}{309375}x$. When both dashed edges are included, their $\NDL$ characteristic polynomial is $p_{\NDL}(x)=x^9 - 9x^8 + \frac{3852769}{108900}x^7 - \frac{21601501}{272250}x^6 +\frac{27208546}{245025}x^5 -\frac{6083465273}{61256250}x^4 +\frac{11318237801}{204187500}x^3 - \frac{1228870232}{69609375}x^2 + \frac{15544256}{6328125}x$. 

\begin{figure}[ht]
    \centering
\begin{tikzpicture}[scale=0.7]
\definecolor{cv0}{rgb}{0.0,0.0,0.0}
\definecolor{cfv0}{rgb}{1.0,1.0,1.0}
\definecolor{clv0}{rgb}{0.0,0.0,0.0}
\definecolor{cv1}{rgb}{0.0,0.0,0.0}
\definecolor{cfv1}{rgb}{1.0,1.0,1.0}
\definecolor{clv1}{rgb}{0.0,0.0,0.0}
\definecolor{cv2}{rgb}{0.0,0.0,0.0}
\definecolor{cfv2}{rgb}{1.0,1.0,1.0}
\definecolor{clv2}{rgb}{0.0,0.0,0.0}
\definecolor{cv3}{rgb}{0.0,0.0,0.0}
\definecolor{cfv3}{rgb}{1.0,1.0,1.0}
\definecolor{clv3}{rgb}{0.0,0.0,0.0}
\definecolor{cv4}{rgb}{0.0,0.0,0.0}
\definecolor{cfv4}{rgb}{1.0,1.0,1.0}
\definecolor{clv4}{rgb}{0.0,0.0,0.0}
\definecolor{cv5}{rgb}{0.0,0.0,0.0}
\definecolor{cfv5}{rgb}{1.0,1.0,1.0}
\definecolor{clv5}{rgb}{0.0,0.0,0.0}
\definecolor{cv6}{rgb}{0.0,0.0,0.0}
\definecolor{cfv6}{rgb}{1.0,1.0,1.0}
\definecolor{clv6}{rgb}{0.0,0.0,0.0}
\definecolor{cv7}{rgb}{0.0,0.0,0.0}
\definecolor{cfv7}{rgb}{1.0,1.0,1.0}
\definecolor{clv7}{rgb}{0.0,0.0,0.0}
\definecolor{cv8}{rgb}{0.0,0.0,0.0}
\definecolor{cfv8}{rgb}{1.0,1.0,1.0}
\definecolor{clv8}{rgb}{0.0,0.0,0.0}
\definecolor{cv0v3}{rgb}{0.0,0.0,0.0}
\definecolor{cv0v4}{rgb}{0.0,0.0,0.0}
\definecolor{cv0v7}{rgb}{0.0,0.0,0.0}
\definecolor{cv0v8}{rgb}{0.0,0.0,0.0}
\definecolor{cv1v2}{rgb}{0.0,0.0,0.0}
\definecolor{cv1v4}{rgb}{0.0,0.0,0.0}
\definecolor{cv1v7}{rgb}{0.0,0.0,0.0}
\definecolor{cv1v8}{rgb}{0.0,0.0,0.0}
\definecolor{cv2v4}{rgb}{0.0,0.0,0.0}
\definecolor{cv2v5}{rgb}{0.0,0.0,0.0}
\definecolor{cv2v6}{rgb}{0.0,0.0,0.0}
\definecolor{cv3v4}{rgb}{0.0,0.0,0.0}
\definecolor{cv3v5}{rgb}{0.0,0.0,0.0}
\definecolor{cv3v6}{rgb}{0.0,0.0,0.0}
\definecolor{cv4v5}{rgb}{0.0,0.0,0.0}
\definecolor{cv4v6}{rgb}{0.0,0.0,0.0}
\definecolor{cv7v8}{rgb}{0.0,0.0,0.0}
\Vertex[style={minimum
size=1.0cm,draw=cv0,fill=cfv0,text=clv0,shape=circle},LabelOut=false,L=\hbox{$2$},x=5cm,y=0cm]{v0}
\Vertex[style={minimum
size=1.0cm,draw=cv1,fill=cfv1,text=clv1,shape=circle},LabelOut=false,L=\hbox{$1$},x=5cm,y=3cm]{v1}
\Vertex[style={minimum
size=1.0cm,draw=cv2,fill=cfv2,text=clv2,shape=circle},LabelOut=false,L=\hbox{$3$},x=3cm,y=3cm]{v2}
\Vertex[style={minimum
size=1.0cm,draw=cv3,fill=cfv3,text=clv3,shape=circle},LabelOut=false,L=\hbox{$4$},x=3cm,y=0cm]{v3}
\Vertex[style={minimum
size=1.0cm,draw=cv4,fill=cfv4,text=clv4,shape=circle},LabelOut=false,L=\hbox{$9$},x=-.5cm,y=1.5cm]{v4}
\Vertex[style={minimum
size=1.0cm,draw=cv5,fill=cfv5,text=clv5,shape=circle},LabelOut=false,L=\hbox{$6$},x=1cm,y=0.0cm]{v5}
\Vertex[style={minimum
size=1.0cm,draw=cv6,fill=cfv6,text=clv6,shape=circle},LabelOut=false,L=\hbox{$5$},x=1cm,y=3cm]{v6}
\Vertex[style={minimum
size=1.0cm,draw=cv7,fill=cfv7,text=clv7,shape=circle},LabelOut=false,L=\hbox{$7$},x=7cm,y=3cm]{v7}
\Vertex[style={minimum
size=1.0cm,draw=cv8,fill=cfv8,text=clv8,shape=circle},LabelOut=false,L=\hbox{$8$},x=7cm,y=0cm]{v8}
\Edge[lw=0.1cm,style={color=cv0v3,},](v0)(v3)
\Edge[lw=0.1cm,style={color=cv0v4,},](v0)(v4)
\Edge[lw=0.1cm,style={color=cv0v7,},](v0)(v7)
\Edge[lw=0.1cm,style={color=cv0v8,},](v0)(v8)
\Edge[lw=0.1cm,style={color=cv1v2,},](v1)(v2)
\Edge[lw=0.1cm,style={color=cv1v4,},](v1)(v4)
\Edge[lw=0.1cm,style={color=cv1v7,},](v1)(v7)
\Edge[lw=0.1cm,style={color=cv1v8,},](v1)(v8)
\Edge[lw=0.1cm,style={color=cv2v4,},](v2)(v4)
\Edge[lw=0.1cm,style={color=cv2v5,},](v2)(v5)
\Edge[lw=0.1cm,style={color=cv2v6,},](v2)(v6)
\Edge[lw=0.1cm,style={color=cv3v4,},](v3)(v4)
\Edge[lw=0.1cm,style={color=cv3v5,},](v3)(v5)
\Edge[lw=0.1cm,style={color=cv3v6,},](v3)(v6)
\Edge[lw=0.1cm,style={color=cv4v5,},](v4)(v5)
\Edge[lw=0.1cm,style={color=cv4v6,},](v4)(v6)
\Edge[lw=0.1cm,style={color=cv7v8,},](v7)(v8)
\Edge[lw=0.1cm,style={dashed},](v2)(v0)
\Edge[lw=0.1cm,style={dashed},](v3)(v1)
\Edge[lw=0.1cm,style={color=red!70},](v2)(v3)
\end{tikzpicture}
\begin{tikzpicture}[scale=0.7]
\definecolor{cv0}{rgb}{0.0,0.0,0.0}
\definecolor{cfv0}{rgb}{1.0,1.0,1.0}
\definecolor{clv0}{rgb}{0.0,0.0,0.0}
\definecolor{cv1}{rgb}{0.0,0.0,0.0}
\definecolor{cfv1}{rgb}{1.0,1.0,1.0}
\definecolor{clv1}{rgb}{0.0,0.0,0.0}
\definecolor{cv2}{rgb}{0.0,0.0,0.0}
\definecolor{cfv2}{rgb}{1.0,1.0,1.0}
\definecolor{clv2}{rgb}{0.0,0.0,0.0}
\definecolor{cv3}{rgb}{0.0,0.0,0.0}
\definecolor{cfv3}{rgb}{1.0,1.0,1.0}
\definecolor{clv3}{rgb}{0.0,0.0,0.0}
\definecolor{cv4}{rgb}{0.0,0.0,0.0}
\definecolor{cfv4}{rgb}{1.0,1.0,1.0}
\definecolor{clv4}{rgb}{0.0,0.0,0.0}
\definecolor{cv5}{rgb}{0.0,0.0,0.0}
\definecolor{cfv5}{rgb}{1.0,1.0,1.0}
\definecolor{clv5}{rgb}{0.0,0.0,0.0}
\definecolor{cv6}{rgb}{0.0,0.0,0.0}
\definecolor{cfv6}{rgb}{1.0,1.0,1.0}
\definecolor{clv6}{rgb}{0.0,0.0,0.0}
\definecolor{cv7}{rgb}{0.0,0.0,0.0}
\definecolor{cfv7}{rgb}{1.0,1.0,1.0}
\definecolor{clv7}{rgb}{0.0,0.0,0.0}
\definecolor{cv8}{rgb}{0.0,0.0,0.0}
\definecolor{cfv8}{rgb}{1.0,1.0,1.0}
\definecolor{clv8}{rgb}{0.0,0.0,0.0}
\definecolor{cv0v3}{rgb}{0.0,0.0,0.0}
\definecolor{cv0v4}{rgb}{0.0,0.0,0.0}
\definecolor{cv0v7}{rgb}{0.0,0.0,0.0}
\definecolor{cv0v8}{rgb}{0.0,0.0,0.0}
\definecolor{cv1v2}{rgb}{0.0,0.0,0.0}
\definecolor{cv1v4}{rgb}{0.0,0.0,0.0}
\definecolor{cv1v7}{rgb}{0.0,0.0,0.0}
\definecolor{cv1v8}{rgb}{0.0,0.0,0.0}
\definecolor{cv2v4}{rgb}{0.0,0.0,0.0}
\definecolor{cv2v5}{rgb}{0.0,0.0,0.0}
\definecolor{cv2v6}{rgb}{0.0,0.0,0.0}
\definecolor{cv3v4}{rgb}{0.0,0.0,0.0}
\definecolor{cv3v5}{rgb}{0.0,0.0,0.0}
\definecolor{cv3v6}{rgb}{0.0,0.0,0.0}
\definecolor{cv4v5}{rgb}{0.0,0.0,0.0}
\definecolor{cv4v6}{rgb}{0.0,0.0,0.0}
\definecolor{cv7v8}{rgb}{0.0,0.0,0.0}
\Vertex[style={minimum
size=1.0cm,draw=cv0,fill=cfv0,text=clv0,shape=circle},LabelOut=false,L=\hbox{$2$},x=5cm,y=0cm]{v0}
\Vertex[style={minimum
size=1.0cm,draw=cv1,fill=cfv1,text=clv1,shape=circle},LabelOut=false,L=\hbox{$1$},x=5cm,y=3cm]{v1}
\Vertex[style={minimum
size=1.0cm,draw=cv2,fill=cfv2,text=clv2,shape=circle},LabelOut=false,L=\hbox{$3$},x=3cm,y=3cm]{v2}
\Vertex[style={minimum
size=1.0cm,draw=cv3,fill=cfv3,text=clv3,shape=circle},LabelOut=false,L=\hbox{$4$},x=3cm,y=0cm]{v3}
\Vertex[style={minimum
size=1.0cm,draw=cv4,fill=cfv4,text=clv4,shape=circle},LabelOut=false,L=\hbox{$9$},x=-.5cm,y=1.5cm]{v4}
\Vertex[style={minimum
size=1.0cm,draw=cv5,fill=cfv5,text=clv5,shape=circle},LabelOut=false,L=\hbox{$6$},x=1cm,y=0.0cm]{v5}
\Vertex[style={minimum
size=1.0cm,draw=cv6,fill=cfv6,text=clv6,shape=circle},LabelOut=false,L=\hbox{$5$},x=1cm,y=3cm]{v6}
\Vertex[style={minimum
size=1.0cm,draw=cv7,fill=cfv7,text=clv7,shape=circle},LabelOut=false,L=\hbox{$7$},x=7cm,y=3cm]{v7}
\Vertex[style={minimum
size=1.0cm,draw=cv8,fill=cfv8,text=clv8,shape=circle},LabelOut=false,L=\hbox{$8$},x=7cm,y=0cm]{v8}
\Edge[lw=0.1cm,style={color=cv0v3,},](v0)(v3)
\Edge[lw=0.1cm,style={color=cv0v4,},](v0)(v4)
\Edge[lw=0.1cm,style={color=cv0v7,},](v0)(v7)
\Edge[lw=0.1cm,style={color=cv0v8,},](v0)(v8)
\Edge[lw=0.1cm,style={color=cv1v2,},](v1)(v2)
\Edge[lw=0.1cm,style={color=cv1v4,},](v1)(v4)
\Edge[lw=0.1cm,style={color=cv1v7,},](v1)(v7)
\Edge[lw=0.1cm,style={color=cv1v8,},](v1)(v8)
\Edge[lw=0.1cm,style={color=cv2v4,},](v2)(v4)
\Edge[lw=0.1cm,style={color=cv2v5,},](v2)(v5)
\Edge[lw=0.1cm,style={color=cv2v6,},](v2)(v6)
\Edge[lw=0.1cm,style={color=cv3v4,},](v3)(v4)
\Edge[lw=0.1cm,style={color=cv3v5,},](v3)(v5)
\Edge[lw=0.1cm,style={color=cv3v6,},](v3)(v6)
\Edge[lw=0.1cm,style={color=cv4v5,},](v4)(v5)
\Edge[lw=0.1cm,style={color=cv4v6,},](v4)(v6)
\Edge[lw=0.1cm,style={color=cv7v8,},](v7)(v8)
\Edge[lw=0.1cm,style={dashed},](v2)(v0)
\Edge[lw=0.1cm,style={dashed},](v3)(v1)
\Edge[lw=0.1cm,style={color=red!70},](v1)(v0)
\end{tikzpicture}
    \caption{Three $\NDL$-cospectral pairs on 9 vertices}
    \label{fig:3pairscospec9}
\end{figure}

Again, we see these cospectral pairs may be constructed using a $\DL$-cospectrality construction from \cite{BDHLRSY19}. Let $G$ be a graph of order at least five with $v_1,v_2,v_3,v_4\in V(G)$. Let $C=\{\{v_1,v_2\},\{v_3,v_4\}\}$ and $U(C)=V(G)\backslash \{v_1,v_2,v_3,v_4\}$. Then $C$ is a set of {\em cousins} in $G$ if for all $u\in U(G)$, $d_G(u,v_1)=d_G(u,v_2)$, $d_G(u,v_3)=d_G(u,v_4)$, and $\sum_{u\in U(C)} d_G(u,v_1)=\sum_{u\in U(C)} d_G(u,v_3)$. 

\begin{thm}\textnormal{\cite[Theorem 3.10]{BDHLRSY19}}\label{cousins}
Let $G$ be a graph with a set of cousins $C=\{\{v_1,v_2\},\{v_3,v_4\}\}$ satisfying the following conditions:
\begin{itemize}
    \item $v_1v_2,v_3v_4\not\in E(G)$
    \item the subgraph of $G+v_1v_2$ induced by $\{v_1,v_2,v_3,v_4\}$ is isomorphic to the subgraph of $G+v_3v_4$ induced by $\{v_1,v_2,v_3,v_4\}$.
\end{itemize}
If $G+v_1v_2$ and $G+v_3v_4$ are not isomorphic, then they are $\DL$-cospectral.
\end{thm}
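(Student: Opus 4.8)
The plan is to prove the stronger statement that $\DL(G+v_1v_2)$ and $\DL(G+v_3v_4)$ are \emph{orthogonally similar}; the non-isomorphism hypothesis then serves only to guarantee that the resulting $\DL$-cospectral pair is non-trivial. Write $H_1=G+v_1v_2$, $H_2=G+v_3v_4$, $S=\{v_1,v_2,v_3,v_4\}$, $U=U(C)=V(G)\setminus S$, and (using that $C$ is a set of cousins) $a_u=d_G(u,v_1)=d_G(u,v_2)$, $b_u=d_G(u,v_3)=d_G(u,v_4)$ for $u\in U$. The first step is to localize the effect of adding an edge: a shortest path of $H_1$ that uses the new edge $v_1v_2$ has the form $x\rightsquigarrow v_i\to v_j\rightsquigarrow y$ with $\{v_i,v_j\}=\{v_1,v_2\}$ and $G$-paths on the ends, so it has length at least $d_G(x,v_i)+1+d_G(v_j,y)$; using the cousins identities $d_G(w,v_1)=d_G(w,v_2)$, $d_G(w,v_3)=d_G(w,v_4)$ for $w\in U$ together with the triangle inequality $b_u\le a_u+d_G(v_i,v_3)$, one checks this always exceeds $d_G(x,y)$ unless $x,y\in S$. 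Hence $\D(H_1)$, $\D(H_2)$, $\D(G)$ agree off the principal block indexed by $S$, the transmissions of the vertices of $U$ are unchanged, and therefore, in the vertex order $U$ then $S$,
\[
\DL(H_1)=\begin{bmatrix} B & C\\ C^{T} & M_1\end{bmatrix},\qquad \DL(H_2)=\begin{bmatrix} B & C\\ C^{T} & M_2\end{bmatrix},
\]
where $B=\DL(G)|_{U\times U}$ and $C=\DL(G)|_{U\times S}$ has the column pattern $C=[\,c\mid c\mid c'\mid c'\,]$ with $c=(-a_u)_{u\in U}$ and $c'=(-b_u)_{u\in U}$.

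It now suffices to produce an orthogonal $R\in\R^{4\times 4}$ with $CR=C$ and $R^{T}M_1R=M_2$, since then $Q=I_{|U|}\oplus R$ satisfies $Q^{T}\DL(H_1)Q=\DL(H_2)$. The condition $CR=C$ amounts to $\operatorname{range}(R-I)\subseteq\ker C$, and the column pattern of $C$ shows $\ker C\supseteq W:=\operatorname{span}\{(1,-1,0,0)^{T},(0,0,1,-1)^{T}\}$; the natural candidate is a Godsil--McKay-type switch fixing the ``sum'' directions $(1,1,0,0)^{T}$, $(0,0,1,1)^{T}$ and interchanging the two ``difference'' directions, namely
\[
R=\tfrac12\begin{bmatrix} 1&1&1&-1\\ 1&1&-1&1\\ 1&-1&1&1\\ -1&1&1&1\end{bmatrix}
\]
(or a sign-variant, determined by the isomorphism $H_1[S]\cong H_2[S]$), which is orthogonal and has $R-I$ mapping into $W$, so $CR=C$ automatically. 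For the remaining identity $R^{T}M_1R=M_2$, the diagonal of $M_k$ is $(\tr_{H_k}(v_i))_i$ and the off-diagonal is $(-d_{H_k}(v_i,v_j))$; the cousins equal-sum condition $\sum_{u}a_u=\sum_{u}b_u$ is exactly what yields $\trace M_1=\trace M_2$ (necessary, since conjugation preserves trace), and the hypothesis that $H_1[S]\cong H_2[S]$ supplies the vertex correspondence $v_1\leftrightarrow v_3$, $v_2\leftrightarrow v_4$ on the $G$-edges inside $S$ that $R$ is built to realize. One then computes the entries of $M_1,M_2$ from the Step-1 formula $d_{H_1}(v_i,v_j)=\min\bigl(d_G(v_i,v_j),\,1+\min(d_G(v_1,v_j),d_G(v_2,v_j))\bigr)$ when $\{v_i,v_j\}$ meets $\{v_1,v_2\}$ (and its mirror for $H_2$), transporting any shortest $G$-path between two vertices of $S$ whose interior lies in $U$ to an equally short path between the switched endpoints — a neighbor of $v_2$ is a neighbor of $v_1$, and a neighbor of $v_3$ is a neighbor of $v_4$, by the cousins property.

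I expect the main obstacle to be this last entrywise verification. The distances $d_{H_k}(v_i,v_j)$ depend on $G$-distances among $v_1,\dots,v_4$ that are \emph{not} recoverable from the induced subgraph $G[S]$, so one must either show that a shortest path realizing each such distance can be chosen with interior entirely in $U$, or separately handle the routings that pass through another vertex of $S$; the hypothesis that the \emph{augmented} induced subgraphs $H_1[S]$ and $H_2[S]$ (not merely $G[S]$) are isomorphic is precisely what eliminates the asymmetric cases. Carrying out that case analysis, and confirming that the transmissions $\tr_{H_k}(v_i)$ redistribute so that $R^{T}M_1R=M_2$ holds on every entry and not just on the trace, is the technical core of the argument.
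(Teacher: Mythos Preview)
This theorem is not proved in the present paper; it is quoted verbatim from \cite[Theorem 3.10]{BDHLRSY19} and used as a tool in Section~5. Consequently there is no proof here to compare your attempt against.

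On the substance of your proposal: the block-similarity framework is the right shape for this kind of result, and your Step~1 (that adding $v_1v_2$ leaves all distances outside the $S\times S$ block unchanged, so that $\DL(H_1)$ and $\DL(H_2)$ differ only in a principal $4\times 4$ block while sharing the off-diagonal block $C=[\,c\mid c\mid c'\mid c'\,]$) is correct and cleanly argued. The difficulty, as you yourself flag, is the identity $R^{T}M_1R=M_2$. Your specific matrix $R$ is an orthogonal reflection with a one-dimensional $(-1)$-eigenspace spanned by $(-1,1,1,-1)^{T}$; it fixes a three-dimensional subspace. For $R^{T}M_1R=M_2$ to hold with such an $R$, one needs $M_1-M_2$ to be a very particular rank-controlled object, and nothing in the hypotheses forces this. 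The isomorphism $H_1[S]\cong H_2[S]$ constrains only the \emph{adjacency} structure inside $S$, whereas the entries of $M_1,M_2$ are \emph{distances in the full graphs} $H_1,H_2$, which depend on shortest paths through $U$ and on the pairwise $G$-distances among $v_1,\dots,v_4$ that the induced-subgraph hypothesis does not determine. Your parenthetical ``or a sign-variant'' and the closing paragraph acknowledge this, but as written the proposal does not supply the argument; it identifies where the work lies without doing it. In the source paper \cite{BDHLRSY19} the proof proceeds along similar block lines but pins down the $4\times 4$ block explicitly using the cousins hypotheses before exhibiting the similarity, rather than positing $R$ first and hoping the entries cooperate.
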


One can easily verify that in each of the three base graphs in Figure \ref{fig:3pairscospec9}, $C=\{\{1,2\},\{3,4\}\}$ is a set of cousins and the subgraph of $G+\{1,2\}$ induced by $\{1,2,3,4\}$ is isomorphic to the subgraph of $G+\{3,4\}$ induced by $\{1,2,3,4\}$. Therefore all three pairs of cospectral graphs can be constructed by Theorem \ref{cousins} and so are $\DL$-cospectral. When $0$ dashed edges are included, their $\DL$ characteristic polynomial is $p_{\DL}(x)=x^9 - 116x^8 + 5853x^7 - 167806x^6 + 2990335x^5 - 33920980x^4 +
239222875x^3 - 959072786x^2 + 1673692704x$. When 1 dashed edge is included, their $\DL$ characteristic polynomial is $p_{\DL}(x)=x^9 - 114x^8 + 5648x^7 - 158862x^6 + 2774997x^5 - 30830726x^4 +
212786586x^3 - 834230170x^2 + 1422606240x$. When both dashed edges are included, their $\DL$ characteristic polynomial is $p_{\DL}(x)=x^9 - 112x^8 + 5447x^7 - 150274x^6 + 2572751x^5 - 27995116x^4 + 189113161x^3 - 725242914x^2 + 1209121056x
$.

Again, we can see this does not always work. The pair of graphs in Figure \ref{fig:cousins} can be constructed in the way described in Theorem \ref{cousins} using their base graph and the set of cousins $\{\{1,2\},\{3,4\}\}$, so they are $\DL$-cospectral. However, they are not $\NDL$-cospectral.


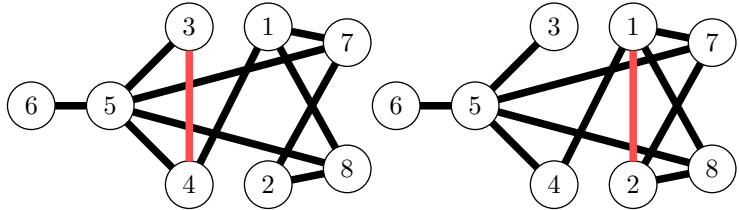
\begin{figure}[ht]
    \centering
\begin{tikzpicture}[scale=0.7]
\definecolor{cv0}{rgb}{0.0,0.0,0.0}
\definecolor{cfv0}{rgb}{1.0,1.0,1.0}
\definecolor{clv0}{rgb}{0.0,0.0,0.0}
\definecolor{cv1}{rgb}{0.0,0.0,0.0}
\definecolor{cfv1}{rgb}{1.0,1.0,1.0}
\definecolor{clv1}{rgb}{0.0,0.0,0.0}
\definecolor{cv2}{rgb}{0.0,0.0,0.0}
\definecolor{cfv2}{rgb}{1.0,1.0,1.0}
\definecolor{clv2}{rgb}{0.0,0.0,0.0}
\definecolor{cv3}{rgb}{0.0,0.0,0.0}
\definecolor{cfv3}{rgb}{1.0,1.0,1.0}
\definecolor{clv3}{rgb}{0.0,0.0,0.0}
\definecolor{cv4}{rgb}{0.0,0.0,0.0}
\definecolor{cfv4}{rgb}{1.0,1.0,1.0}
\definecolor{clv4}{rgb}{0.0,0.0,0.0}
\definecolor{cv5}{rgb}{0.0,0.0,0.0}
\definecolor{cfv5}{rgb}{1.0,1.0,1.0}
\definecolor{clv5}{rgb}{0.0,0.0,0.0}
\definecolor{cv6}{rgb}{0.0,0.0,0.0}
\definecolor{cfv6}{rgb}{1.0,1.0,1.0}
\definecolor{clv6}{rgb}{0.0,0.0,0.0}
\definecolor{cv7}{rgb}{0.0,0.0,0.0}
\definecolor{cfv7}{rgb}{1.0,1.0,1.0}
\definecolor{clv7}{rgb}{0.0,0.0,0.0}
\definecolor{cv0v3}{rgb}{0.0,0.0,0.0}
\definecolor{cv0v4}{rgb}{0.0,0.0,0.0}
\definecolor{cv0v5}{rgb}{0.0,0.0,0.0}
\definecolor{cv0v6}{rgb}{0.0,0.0,0.0}
\definecolor{cv0v7}{rgb}{0.0,0.0,0.0}
\definecolor{cv1v3}{rgb}{0.0,0.0,0.0}
\definecolor{cv1v4}{rgb}{0.0,0.0,0.0}
\definecolor{cv1v5}{rgb}{0.0,0.0,0.0}
\definecolor{cv2v3}{rgb}{0.0,0.0,0.0}
\definecolor{cv2v4}{rgb}{0.0,0.0,0.0}
\definecolor{cv5v6}{rgb}{0.0,0.0,0.0}
\Vertex[style={minimum
size=1.0cm,draw=cv0,fill=cfv0,text=clv0,shape=circle},LabelOut=false,L=\hbox{$5$},x=1.5cm,y=1.5cm]{v0}
\Vertex[style={minimum
size=1.0cm,draw=cv1,fill=cfv1,text=clv1,shape=circle},LabelOut=false,L=\hbox{$1$},x=4.5cm,y=3cm]{v1}
\Vertex[style={minimum
size=1.0cm,draw=cv2,fill=cfv2,text=clv2,shape=circle},LabelOut=false,L=\hbox{$2$},x=4.5cm,y=0cm]{v2}
\Vertex[style={minimum
size=1.0cm,draw=cv3,fill=cfv3,text=clv3,shape=circle},LabelOut=false,L=\hbox{$7$},x=6cm,y=2.7cm]{v3}
\Vertex[style={minimum
size=1.0cm,draw=cv4,fill=cfv4,text=clv4,shape=circle},LabelOut=false,L=\hbox{$8$},x=6cm,y=.3cm]{v4}
\Vertex[style={minimum
size=1.0cm,draw=cv5,fill=cfv5,text=clv5,shape=circle},LabelOut=false,L=\hbox{$4$},x=3cm,y=0cm]{v5}
\Vertex[style={minimum
size=1.0cm,draw=cv6,fill=cfv6,text=clv6,shape=circle},LabelOut=false,L=\hbox{$3$},x=3cm,y=3cm]{v6}
\Vertex[style={minimum
size=1.0cm,draw=cv7,fill=cfv7,text=clv7,shape=circle},LabelOut=false,L=\hbox{$6$},x=0,y=1.5cm]{v7}
\Edge[lw=0.1cm,style={color=cv0v3,},](v0)(v3)
\Edge[lw=0.1cm,style={color=cv0v4,},](v0)(v4)
\Edge[lw=0.1cm,style={color=cv0v5,},](v0)(v5)
\Edge[lw=0.1cm,style={color=cv0v6,},](v0)(v6)
\Edge[lw=0.1cm,style={color=cv0v7,},](v0)(v7)
\Edge[lw=0.1cm,style={color=cv1v3,},](v1)(v3)
\Edge[lw=0.1cm,style={color=cv1v4,},](v1)(v4)
\Edge[lw=0.1cm,style={color=cv1v5,},](v1)(v5)
\Edge[lw=0.1cm,style={color=cv2v3,},](v2)(v3)
\Edge[lw=0.1cm,style={color=cv2v4,},](v2)(v4)
\Edge[lw=0.1cm,style={color=red!70,},](v5)(v6)
\end{tikzpicture}
\begin{tikzpicture}[scale=0.7]
\definecolor{cv0}{rgb}{0.0,0.0,0.0}
\definecolor{cfv0}{rgb}{1.0,1.0,1.0}
\definecolor{clv0}{rgb}{0.0,0.0,0.0}
\definecolor{cv1}{rgb}{0.0,0.0,0.0}
\definecolor{cfv1}{rgb}{1.0,1.0,1.0}
\definecolor{clv1}{rgb}{0.0,0.0,0.0}
\definecolor{cv2}{rgb}{0.0,0.0,0.0}
\definecolor{cfv2}{rgb}{1.0,1.0,1.0}
\definecolor{clv2}{rgb}{0.0,0.0,0.0}
\definecolor{cv3}{rgb}{0.0,0.0,0.0}
\definecolor{cfv3}{rgb}{1.0,1.0,1.0}
\definecolor{clv3}{rgb}{0.0,0.0,0.0}
\definecolor{cv4}{rgb}{0.0,0.0,0.0}
\definecolor{cfv4}{rgb}{1.0,1.0,1.0}
\definecolor{clv4}{rgb}{0.0,0.0,0.0}
\definecolor{cv5}{rgb}{0.0,0.0,0.0}
\definecolor{cfv5}{rgb}{1.0,1.0,1.0}
\definecolor{clv5}{rgb}{0.0,0.0,0.0}
\definecolor{cv6}{rgb}{0.0,0.0,0.0}
\definecolor{cfv6}{rgb}{1.0,1.0,1.0}
\definecolor{clv6}{rgb}{0.0,0.0,0.0}
\definecolor{cv7}{rgb}{0.0,0.0,0.0}
\definecolor{cfv7}{rgb}{1.0,1.0,1.0}
\definecolor{clv7}{rgb}{0.0,0.0,0.0}
\definecolor{cv0v3}{rgb}{0.0,0.0,0.0}
\definecolor{cv0v4}{rgb}{0.0,0.0,0.0}
\definecolor{cv0v5}{rgb}{0.0,0.0,0.0}
\definecolor{cv0v6}{rgb}{0.0,0.0,0.0}
\definecolor{cv0v7}{rgb}{0.0,0.0,0.0}
\definecolor{cv1v3}{rgb}{0.0,0.0,0.0}
\definecolor{cv1v4}{rgb}{0.0,0.0,0.0}
\definecolor{cv1v5}{rgb}{0.0,0.0,0.0}
\definecolor{cv2v3}{rgb}{0.0,0.0,0.0}
\definecolor{cv2v4}{rgb}{0.0,0.0,0.0}
\definecolor{cv5v6}{rgb}{0.0,0.0,0.0}
\Vertex[style={minimum
size=1.0cm,draw=cv0,fill=cfv0,text=clv0,shape=circle},LabelOut=false,L=\hbox{$5$},x=1.5cm,y=1.5cm]{v0}
\Vertex[style={minimum
size=1.0cm,draw=cv1,fill=cfv1,text=clv1,shape=circle},LabelOut=false,L=\hbox{$1$},x=4.5cm,y=3cm]{v1}
\Vertex[style={minimum
size=1.0cm,draw=cv2,fill=cfv2,text=clv2,shape=circle},LabelOut=false,L=\hbox{$2$},x=4.5cm,y=0cm]{v2}
\Vertex[style={minimum
size=1.0cm,draw=cv3,fill=cfv3,text=clv3,shape=circle},LabelOut=false,L=\hbox{$7$},x=6cm,y=2.7cm]{v3}
\Vertex[style={minimum
size=1.0cm,draw=cv4,fill=cfv4,text=clv4,shape=circle},LabelOut=false,L=\hbox{$8$},x=6cm,y=.3cm]{v4}
\Vertex[style={minimum
size=1.0cm,draw=cv5,fill=cfv5,text=clv5,shape=circle},LabelOut=false,L=\hbox{$4$},x=3cm,y=0cm]{v5}
\Vertex[style={minimum
size=1.0cm,draw=cv6,fill=cfv6,text=clv6,shape=circle},LabelOut=false,L=\hbox{$3$},x=3cm,y=3cm]{v6}
\Vertex[style={minimum
size=1.0cm,draw=cv7,fill=cfv7,text=clv7,shape=circle},LabelOut=false,L=\hbox{$6$},x=0,y=1.5cm]{v7}
\Edge[lw=0.1cm,style={color=cv0v3,},](v0)(v3)
\Edge[lw=0.1cm,style={color=cv0v4,},](v0)(v4)
\Edge[lw=0.1cm,style={color=cv0v5,},](v0)(v5)
\Edge[lw=0.1cm,style={color=cv0v6,},](v0)(v6)
\Edge[lw=0.1cm,style={color=cv0v7,},](v0)(v7)
\Edge[lw=0.1cm,style={color=cv1v3,},](v1)(v3)
\Edge[lw=0.1cm,style={color=cv1v4,},](v1)(v4)
\Edge[lw=0.1cm,style={color=cv1v5,},](v1)(v5)
\Edge[lw=0.1cm,style={color=cv2v3,},](v2)(v3)
\Edge[lw=0.1cm,style={color=cv2v4,},](v2)(v4)
\Edge[lw=0.1cm,style={color=red!70,},](v1)(v2)
\end{tikzpicture}
    \caption{Graphs that are $\DL$ but not $\NDL$-cospectral using the cousins construction}
    \label{fig:cousins}
\end{figure}

The last $\NDL$-cospectral pair on 9 vertices is shown in Figure \ref{fig:differentedges}. While the other three $\NDL$-cospectral pairs involve edge switching, in this pair two additional edges ({\red light} colored) are added to the first graph to obtain the second. The $\NDL$ characteristic polynomial of the graphs is $p_{\NDL}(x)=x^9 - 9x^8 + \frac{23884}{675}x^7 -\frac{7232482}{91125}x^6 + \frac{30369859}{273375}x^5 -\frac{27161183}{273375}x^4 + \frac{15156922}{273375}x^3 -\frac{1608332}{91125}x^2 +\frac{74536}{30375}x $. The graphs in Figure \ref{fig:differentedges} show that the number of edges, the degree sequence, and the transmission sequence are not preserved by $\NDL$-cospectrality. The {\em degree sequence} of a graph $G$ is the list of degrees of the vertices in $G$ and the {\em transmission sequence} of a graph $G$ is the list of transmissions of the vertices in $G$. The transmission sequence of $G_1$ is $[9, 9, 9, 9, 10, 10, 10, 10, 12]$ and the transmission sequence of $G_2$ is $[9, 9, 9, 9, 9, 9, 10, 10, 10]$. The degree sequence of $G_1$ is $[4, 6, 6, 6, 6,7, 7, 7, 7]$ and the degree sequence of $G_2$ is $[6,6,6,7, 7, 7, 7, 7, 7]$. This pair also provides an example of a cospectral pair that is $\NDL$-cospectral but not $\DL$-cospectral. 


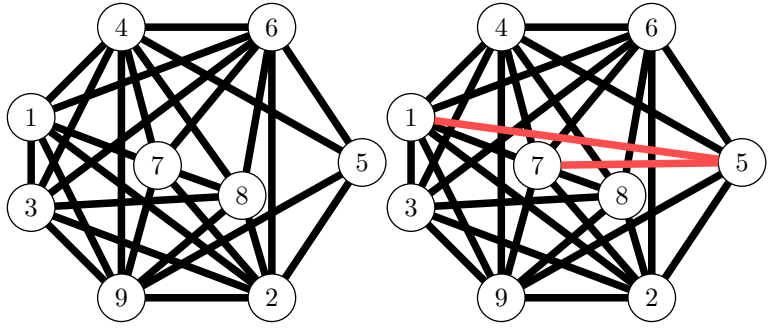
\begin{figure}[ht]
    \centering
\begin{tikzpicture}[scale=0.8]
\definecolor{cv0}{rgb}{0.0,0.0,0.0}
\definecolor{cfv0}{rgb}{1.0,1.0,1.0}
\definecolor{clv0}{rgb}{0.0,0.0,0.0}
\definecolor{cv1}{rgb}{0.0,0.0,0.0}
\definecolor{cfv1}{rgb}{1.0,1.0,1.0}
\definecolor{clv1}{rgb}{0.0,0.0,0.0}
\definecolor{cv2}{rgb}{0.0,0.0,0.0}
\definecolor{cfv2}{rgb}{1.0,1.0,1.0}
\definecolor{clv2}{rgb}{0.0,0.0,0.0}
\definecolor{cv3}{rgb}{0.0,0.0,0.0}
\definecolor{cfv3}{rgb}{1.0,1.0,1.0}
\definecolor{clv3}{rgb}{0.0,0.0,0.0}
\definecolor{cv4}{rgb}{0.0,0.0,0.0}
\definecolor{cfv4}{rgb}{1.0,1.0,1.0}
\definecolor{clv4}{rgb}{0.0,0.0,0.0}
\definecolor{cv5}{rgb}{0.0,0.0,0.0}
\definecolor{cfv5}{rgb}{1.0,1.0,1.0}
\definecolor{clv5}{rgb}{0.0,0.0,0.0}
\definecolor{cv6}{rgb}{0.0,0.0,0.0}
\definecolor{cfv6}{rgb}{1.0,1.0,1.0}
\definecolor{clv6}{rgb}{0.0,0.0,0.0}
\definecolor{cv7}{rgb}{0.0,0.0,0.0}
\definecolor{cfv7}{rgb}{1.0,1.0,1.0}
\definecolor{clv7}{rgb}{0.0,0.0,0.0}
\definecolor{cv8}{rgb}{0.0,0.0,0.0}
\definecolor{cfv8}{rgb}{1.0,1.0,1.0}
\definecolor{clv8}{rgb}{0.0,0.0,0.0}
\definecolor{cv0v1}{rgb}{0.0,0.0,0.0}
\definecolor{cv0v2}{rgb}{0.0,0.0,0.0}
\definecolor{cv0v3}{rgb}{0.0,0.0,0.0}
\definecolor{cv0v4}{rgb}{0.0,0.0,0.0}
\definecolor{cv0v5}{rgb}{0.0,0.0,0.0}
\definecolor{cv0v7}{rgb}{0.0,0.0,0.0}
\definecolor{cv0v8}{rgb}{0.0,0.0,0.0}
\definecolor{cv1v2}{rgb}{0.0,0.0,0.0}
\definecolor{cv1v3}{rgb}{0.0,0.0,0.0}
\definecolor{cv1v4}{rgb}{0.0,0.0,0.0}
\definecolor{cv1v6}{rgb}{0.0,0.0,0.0}
\definecolor{cv1v7}{rgb}{0.0,0.0,0.0}
\definecolor{cv2v3}{rgb}{0.0,0.0,0.0}
\definecolor{cv2v5}{rgb}{0.0,0.0,0.0}
\definecolor{cv2v6}{rgb}{0.0,0.0,0.0}
\definecolor{cv2v7}{rgb}{0.0,0.0,0.0}
\definecolor{cv2v8}{rgb}{0.0,0.0,0.0}
\definecolor{cv3v4}{rgb}{0.0,0.0,0.0}
\definecolor{cv3v6}{rgb}{0.0,0.0,0.0}
\definecolor{cv3v8}{rgb}{0.0,0.0,0.0}
\definecolor{cv4v5}{rgb}{0.0,0.0,0.0}
\definecolor{cv4v6}{rgb}{0.0,0.0,0.0}
\definecolor{cv4v7}{rgb}{0.0,0.0,0.0}
\definecolor{cv4v8}{rgb}{0.0,0.0,0.0}
\definecolor{cv5v6}{rgb}{0.0,0.0,0.0}
\definecolor{cv6v7}{rgb}{0.0,0.0,0.0}
\definecolor{cv6v8}{rgb}{0.0,0.0,0.0}
\definecolor{cv7v8}{rgb}{0.0,0.0,0.0}
\Vertex[style={minimum
size=1.0cm,draw=cv0,fill=cfv0,text=clv0,shape=circle},LabelOut=false,L=\hbox{$9$},x=1cm,y=0.0cm]{v0}
\Vertex[style={minimum
size=1.0cm,draw=cv1,fill=cfv1,text=clv1,shape=circle},LabelOut=false,L=\hbox{$1$},x=-.5cm,y=3cm]{v1}
\Vertex[style={minimum
size=1.0cm,draw=cv2,fill=cfv2,text=clv2,shape=circle},LabelOut=false,L=\hbox{$2$},x=3.5cm,y=0cm]{v2}
\Vertex[style={minimum
size=1.0cm,draw=cv3,fill=cfv3,text=clv3,shape=circle},LabelOut=false,L=\hbox{$3$},x=-.5cm,y=1.5cm]{v3}
\Vertex[style={minimum
size=1.0cm,draw=cv4,fill=cfv4,text=clv4,shape=circle},LabelOut=false,L=\hbox{$4$},x=1cm,y=4.5cm]{v4}
\Vertex[style={minimum
size=1.0cm,draw=cv5,fill=cfv5,text=clv5,shape=circle},LabelOut=false,L=\hbox{$5$},x=5cm,y=2.25cm]{v5}
\Vertex[style={minimum
size=1.0cm,draw=cv6,fill=cfv6,text=clv6,shape=circle},LabelOut=false,L=\hbox{$6$},x=3.5cm,y=4.5cm]{v6}
\Vertex[style={minimum
size=1.0cm,draw=cv7,fill=cfv7,text=clv7,shape=circle},LabelOut=false,L=\hbox{$7$},x=1.6cm,y=2.2cm]{v7}
\Vertex[style={minimum
size=1.0cm,draw=cv8,fill=cfv8,text=clv8,shape=circle},LabelOut=false,L=\hbox{$8$},x=3.0057cm,y=1.7cm]{v8}
\Edge[lw=0.1cm,style={color=cv0v1,},](v0)(v1)
\Edge[lw=0.1cm,style={color=cv0v2,},](v0)(v2)
\Edge[lw=0.1cm,style={color=cv0v3,},](v0)(v3)
\Edge[lw=0.1cm,style={color=cv0v4,},](v0)(v4)
\Edge[lw=0.1cm,style={color=cv0v5,},](v0)(v5)
\Edge[lw=0.1cm,style={color=cv0v7,},](v0)(v7)
\Edge[lw=0.1cm,style={color=cv0v8,},](v0)(v8)
\Edge[lw=0.1cm,style={color=cv1v2,},](v1)(v2)
\Edge[lw=0.1cm,style={color=cv1v3,},](v1)(v3)
\Edge[lw=0.1cm,style={color=cv1v4,},](v1)(v4)
\Edge[lw=0.1cm,style={color=cv1v6,},](v1)(v6)
\Edge[lw=0.1cm,style={color=cv1v7,},](v1)(v7)
\Edge[lw=0.1cm,style={color=cv2v3,},](v2)(v3)
\Edge[lw=0.1cm,style={color=cv2v5,},](v2)(v5)
\Edge[lw=0.1cm,style={color=cv2v6,},](v2)(v6)
\Edge[lw=0.1cm,style={color=cv2v7,},](v2)(v7)
\Edge[lw=0.1cm,style={color=cv2v8,},](v2)(v8)
\Edge[lw=0.1cm,style={color=cv3v4,},](v3)(v4)
\Edge[lw=0.1cm,style={color=cv3v6,},](v3)(v6)
\Edge[lw=0.1cm,style={color=cv3v8,},](v3)(v8)
\Edge[lw=0.1cm,style={color=cv4v5,},](v4)(v5)
\Edge[lw=0.1cm,style={color=cv4v6,},](v4)(v6)
\Edge[lw=0.1cm,style={color=cv4v7,},](v4)(v7)
\Edge[lw=0.1cm,style={color=cv4v8,},](v4)(v8)
\Edge[lw=0.1cm,style={color=cv5v6,},](v5)(v6)
\Edge[lw=0.1cm,style={color=cv6v7,},](v6)(v7)
\Edge[lw=0.1cm,style={color=cv6v8,},](v6)(v8)
\Edge[lw=0.1cm,style={color=cv7v8,},](v7)(v8)
\end{tikzpicture}
\begin{tikzpicture}[scale=0.8]
\definecolor{cv0}{rgb}{0.0,0.0,0.0}
\definecolor{cfv0}{rgb}{1.0,1.0,1.0}
\definecolor{clv0}{rgb}{0.0,0.0,0.0}
\definecolor{cv1}{rgb}{0.0,0.0,0.0}
\definecolor{cfv1}{rgb}{1.0,1.0,1.0}
\definecolor{clv1}{rgb}{0.0,0.0,0.0}
\definecolor{cv2}{rgb}{0.0,0.0,0.0}
\definecolor{cfv2}{rgb}{1.0,1.0,1.0}
\definecolor{clv2}{rgb}{0.0,0.0,0.0}
\definecolor{cv3}{rgb}{0.0,0.0,0.0}
\definecolor{cfv3}{rgb}{1.0,1.0,1.0}
\definecolor{clv3}{rgb}{0.0,0.0,0.0}
\definecolor{cv4}{rgb}{0.0,0.0,0.0}
\definecolor{cfv4}{rgb}{1.0,1.0,1.0}
\definecolor{clv4}{rgb}{0.0,0.0,0.0}
\definecolor{cv5}{rgb}{0.0,0.0,0.0}
\definecolor{cfv5}{rgb}{1.0,1.0,1.0}
\definecolor{clv5}{rgb}{0.0,0.0,0.0}
\definecolor{cv6}{rgb}{0.0,0.0,0.0}
\definecolor{cfv6}{rgb}{1.0,1.0,1.0}
\definecolor{clv6}{rgb}{0.0,0.0,0.0}
\definecolor{cv7}{rgb}{0.0,0.0,0.0}
\definecolor{cfv7}{rgb}{1.0,1.0,1.0}
\definecolor{clv7}{rgb}{0.0,0.0,0.0}
\definecolor{cv8}{rgb}{0.0,0.0,0.0}
\definecolor{cfv8}{rgb}{1.0,1.0,1.0}
\definecolor{clv8}{rgb}{0.0,0.0,0.0}
\definecolor{cv0v1}{rgb}{0.0,0.0,0.0}
\definecolor{cv0v2}{rgb}{0.0,0.0,0.0}
\definecolor{cv0v3}{rgb}{0.0,0.0,0.0}
\definecolor{cv0v4}{rgb}{0.0,0.0,0.0}
\definecolor{cv0v5}{rgb}{0.0,0.0,0.0}
\definecolor{cv0v7}{rgb}{0.0,0.0,0.0}
\definecolor{cv0v8}{rgb}{0.0,0.0,0.0}
\definecolor{cv1v2}{rgb}{0.0,0.0,0.0}
\definecolor{cv1v3}{rgb}{0.0,0.0,0.0}
\definecolor{cv1v4}{rgb}{0.0,0.0,0.0}
\definecolor{cv1v6}{rgb}{0.0,0.0,0.0}
\definecolor{cv1v7}{rgb}{0.0,0.0,0.0}
\definecolor{cv2v3}{rgb}{0.0,0.0,0.0}
\definecolor{cv2v5}{rgb}{0.0,0.0,0.0}
\definecolor{cv2v6}{rgb}{0.0,0.0,0.0}
\definecolor{cv2v7}{rgb}{0.0,0.0,0.0}
\definecolor{cv2v8}{rgb}{0.0,0.0,0.0}
\definecolor{cv3v4}{rgb}{0.0,0.0,0.0}
\definecolor{cv3v6}{rgb}{0.0,0.0,0.0}
\definecolor{cv3v8}{rgb}{0.0,0.0,0.0}
\definecolor{cv4v5}{rgb}{0.0,0.0,0.0}
\definecolor{cv4v6}{rgb}{0.0,0.0,0.0}
\definecolor{cv4v7}{rgb}{0.0,0.0,0.0}
\definecolor{cv4v8}{rgb}{0.0,0.0,0.0}
\definecolor{cv5v6}{rgb}{0.0,0.0,0.0}
\definecolor{cv6v7}{rgb}{0.0,0.0,0.0}
\definecolor{cv6v8}{rgb}{0.0,0.0,0.0}
\definecolor{cv7v8}{rgb}{0.0,0.0,0.0}
\Vertex[style={minimum
size=1.0cm,draw=cv0,fill=cfv0,text=clv0,shape=circle},LabelOut=false,L=\hbox{$9$},x=1cm,y=0.0cm]{v0}
\Vertex[style={minimum
size=1.0cm,draw=cv1,fill=cfv1,text=clv1,shape=circle},LabelOut=false,L=\hbox{$1$},x=-.5cm,y=3cm]{v1}
\Vertex[style={minimum
size=1.0cm,draw=cv2,fill=cfv2,text=clv2,shape=circle},LabelOut=false,L=\hbox{$2$},x=3.5cm,y=0cm]{v2}
\Vertex[style={minimum
size=1.0cm,draw=cv3,fill=cfv3,text=clv3,shape=circle},LabelOut=false,L=\hbox{$3$},x=-.5cm,y=1.5cm]{v3}
\Vertex[style={minimum
size=1.0cm,draw=cv4,fill=cfv4,text=clv4,shape=circle},LabelOut=false,L=\hbox{$4$},x=1cm,y=4.5cm]{v4}
\Vertex[style={minimum
size=1.0cm,draw=cv5,fill=cfv5,text=clv5,shape=circle},LabelOut=false,L=\hbox{$5$},x=5cm,y=2.25cm]{v5}
\Vertex[style={minimum
size=1.0cm,draw=cv6,fill=cfv6,text=clv6,shape=circle},LabelOut=false,L=\hbox{$6$},x=3.5cm,y=4.5cm]{v6}
\Vertex[style={minimum
size=1.0cm,draw=cv7,fill=cfv7,text=clv7,shape=circle},LabelOut=false,L=\hbox{$7$},x=1.6cm,y=2.2cm]{v7}
\Vertex[style={minimum
size=1.0cm,draw=cv8,fill=cfv8,text=clv8,shape=circle},LabelOut=false,L=\hbox{$8$},x=3.0057cm,y=1.7cm]{v8}
\Edge[lw=0.1cm,style={color=cv0v1,},](v0)(v1)
\Edge[lw=0.1cm,style={color=cv0v2,},](v0)(v2)
\Edge[lw=0.1cm,style={color=cv0v3,},](v0)(v3)
\Edge[lw=0.1cm,style={color=cv0v4,},](v0)(v4)
\Edge[lw=0.1cm,style={color=cv0v5,},](v0)(v5)
\Edge[lw=0.1cm,style={color=cv0v7,},](v0)(v7)
\Edge[lw=0.1cm,style={color=cv0v8,},](v0)(v8)
\Edge[lw=0.1cm,style={color=cv1v2,},](v1)(v2)
\Edge[lw=0.1cm,style={color=cv1v3,},](v1)(v3)
\Edge[lw=0.1cm,style={color=cv1v4,},](v1)(v4)
\Edge[lw=0.1cm,style={color=cv1v6,},](v1)(v6)
\Edge[lw=0.1cm,style={color=cv1v7,},](v1)(v7)
\Edge[lw=0.1cm,style={color=cv2v3,},](v2)(v3)
\Edge[lw=0.1cm,style={color=cv2v5,},](v2)(v5)
\Edge[lw=0.1cm,style={color=cv2v6,},](v2)(v6)
\Edge[lw=0.1cm,style={color=cv2v7,},](v2)(v7)
\Edge[lw=0.1cm,style={color=cv2v8,},](v2)(v8)
\Edge[lw=0.1cm,style={color=cv3v4,},](v3)(v4)
\Edge[lw=0.1cm,style={color=cv3v6,},](v3)(v6)
\Edge[lw=0.1cm,style={color=cv3v8,},](v3)(v8)
\Edge[lw=0.1cm,style={color=cv4v5,},](v4)(v5)
\Edge[lw=0.1cm,style={color=cv4v6,},](v4)(v6)
\Edge[lw=0.1cm,style={color=cv4v7,},](v4)(v7)
\Edge[lw=0.1cm,style={color=cv4v8,},](v4)(v8)
\Edge[lw=0.1cm,style={color=cv5v6,},](v5)(v6)
\Edge[lw=0.1cm,style={color=cv6v7,},](v6)(v7)
\Edge[lw=0.1cm,style={color=cv6v8,},](v6)(v8)
\Edge[lw=0.1cm,style={color=cv7v8,},](v7)(v8)
\Edge[lw=0.1cm,style={color=red!70,},](v1)(v5)
\Edge[lw=0.1cm,style={color=red!70,},](v7)(v5)
\end{tikzpicture}
    \caption{$G_1$ and $G_2$, $\NDL$-cospectral pair on 9 vertices with a different number of edges, different degree sequences, and different transmission sequences}
    \label{fig:differentedges}
\end{figure}

On 10 vertices, there are 3763 pairs of $\NDL$-cospectral graphs and 4 triples. A graph is {\em planar} if it can be drawn in the plane without any edges crossing. In Figure \ref{fig:cospecPlanar}, $H_1$ is planar and $H_2$ is not, and the graphs share the $\NDL$ characteristic polynomial $p_{\NDL}(x)= x^{10} - 10x^9 + \frac{11760575}{264992}x^8 - \frac{14698252437}{128107070}x^7 +\frac{1561159495967}{8198852480}x^6 - \frac{5605798973451}{26646270560}x^5 +\frac{7068694654043}{45679320960}x^4 - \frac{11682868723247}{159877623360}x^3 +
\frac{535639931153}{26646270560}x^2 - \frac{65401424433}{26646270560}x$.
Therefore planarity is not preserved by $\NDL$-cospectrality.

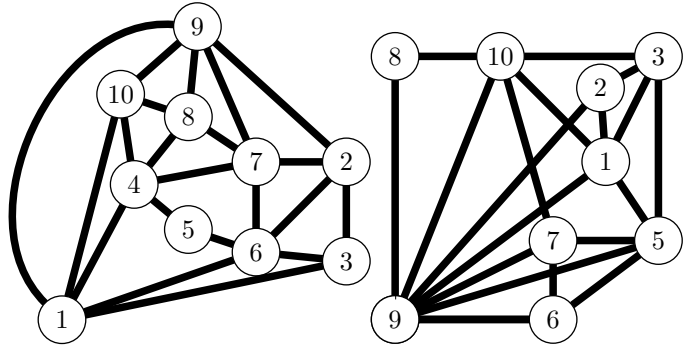
\begin{figure}[h!]
\begin{center}
\begin{tikzpicture}[scale=0.6]
\definecolor{cv0}{rgb}{0.0,0.0,0.0}
\definecolor{cfv0}{rgb}{1.0,1.0,1.0}
\definecolor{clv0}{rgb}{0.0,0.0,0.0}
\definecolor{cv1}{rgb}{0.0,0.0,0.0}
\definecolor{cfv1}{rgb}{1.0,1.0,1.0}
\definecolor{clv1}{rgb}{0.0,0.0,0.0}
\definecolor{cv2}{rgb}{0.0,0.0,0.0}
\definecolor{cfv2}{rgb}{1.0,1.0,1.0}
\definecolor{clv2}{rgb}{0.0,0.0,0.0}
\definecolor{cv3}{rgb}{0.0,0.0,0.0}
\definecolor{cfv3}{rgb}{1.0,1.0,1.0}
\definecolor{clv3}{rgb}{0.0,0.0,0.0}
\definecolor{cv4}{rgb}{0.0,0.0,0.0}
\definecolor{cfv4}{rgb}{1.0,1.0,1.0}
\definecolor{clv4}{rgb}{0.0,0.0,0.0}
\definecolor{cv5}{rgb}{0.0,0.0,0.0}
\definecolor{cfv5}{rgb}{1.0,1.0,1.0}
\definecolor{clv5}{rgb}{0.0,0.0,0.0}
\definecolor{cv6}{rgb}{0.0,0.0,0.0}
\definecolor{cfv6}{rgb}{1.0,1.0,1.0}
\definecolor{clv6}{rgb}{0.0,0.0,0.0}
\definecolor{cv7}{rgb}{0.0,0.0,0.0}
\definecolor{cfv7}{rgb}{1.0,1.0,1.0}
\definecolor{clv7}{rgb}{0.0,0.0,0.0}
\definecolor{cv8}{rgb}{0.0,0.0,0.0}
\definecolor{cfv8}{rgb}{1.0,1.0,1.0}
\definecolor{clv8}{rgb}{0.0,0.0,0.0}
\definecolor{cv9}{rgb}{0.0,0.0,0.0}
\definecolor{cfv9}{rgb}{1.0,1.0,1.0}
\definecolor{clv9}{rgb}{0.0,0.0,0.0}
\definecolor{cv0v1}{rgb}{0.0,0.0,0.0}
\definecolor{cv0v4}{rgb}{0.0,0.0,0.0}
\definecolor{cv0v8}{rgb}{0.0,0.0,0.0}
\definecolor{cv0v9}{rgb}{0.0,0.0,0.0}
\definecolor{cv1v3}{rgb}{0.0,0.0,0.0}
\definecolor{cv1v4}{rgb}{0.0,0.0,0.0}
\definecolor{cv1v6}{rgb}{0.0,0.0,0.0}
\definecolor{cv1v9}{rgb}{0.0,0.0,0.0}
\definecolor{cv2v3}{rgb}{0.0,0.0,0.0}
\definecolor{cv2v6}{rgb}{0.0,0.0,0.0}
\definecolor{cv2v7}{rgb}{0.0,0.0,0.0}
\definecolor{cv2v9}{rgb}{0.0,0.0,0.0}
\definecolor{cv3v6}{rgb}{0.0,0.0,0.0}
\definecolor{cv4v5}{rgb}{0.0,0.0,0.0}
\definecolor{cv4v7}{rgb}{0.0,0.0,0.0}
\definecolor{cv4v8}{rgb}{0.0,0.0,0.0}
\definecolor{cv5v6}{rgb}{0.0,0.0,0.0}
\definecolor{cv6v7}{rgb}{0.0,0.0,0.0}
\definecolor{cv7v8}{rgb}{0.0,0.0,0.0}
\definecolor{cv7v9}{rgb}{0.0,0.0,0.0}
\definecolor{cv8v9}{rgb}{0.0,0.0,0.0}
\Vertex[style={minimum
size=1.0cm,draw=cv0,fill=cfv0,text=clv0,shape=circle},LabelOut=false,L=\hbox{$10$},x=0.0cm,y=4.5cm]{v0}
\Vertex[style={minimum
size=1.0cm,draw=cv1,fill=cfv1,text=clv1,shape=circle},LabelOut=false,L=\hbox{$1$},x=-1.3cm,y=-.5cm]{v1}
\Vertex[style={minimum
size=1.0cm,draw=cv2,fill=cfv2,text=clv2,shape=circle},LabelOut=false,L=\hbox{$2$},x=5cm,y=3cm]{v2}
\Vertex[style={minimum
size=1.0cm,draw=cv3,fill=cfv3,text=clv3,shape=circle},LabelOut=false,L=\hbox{$3$},x=5cm,y=.8cm]{v3}
\Vertex[style={minimum
size=1.0cm,draw=cv4,fill=cfv4,text=clv4,shape=circle},LabelOut=false,L=\hbox{$4$},x=.3cm,y=2.5cm]{v4}
\Vertex[style={minimum
size=1.0cm,draw=cv5,fill=cfv5,text=clv5,shape=circle},LabelOut=false,L=\hbox{$5$},x=1.5cm,y=1.5cm]{v5}
\Vertex[style={minimum
size=1.0cm,draw=cv6,fill=cfv6,text=clv6,shape=circle},LabelOut=false,L=\hbox{$6$},x=3cm,y=1cm]{v6}
\Vertex[style={minimum
size=1.0cm,draw=cv7,fill=cfv7,text=clv7,shape=circle},LabelOut=false,L=\hbox{$7$},x=3cm,y=3cm]{v7}
\Vertex[style={minimum
size=1.0cm,draw=cv8,fill=cfv8,text=clv8,shape=circle},LabelOut=false,L=\hbox{$8$},x=1.5cm,y=4cm]{v8}
\Vertex[style={minimum
size=1.0cm,draw=cv9,fill=cfv9,text=clv9,shape=circle},LabelOut=false,L=\hbox{$9$},x=1.7cm,y=6cm]{v9}
\Edge[lw=0.1cm,style={color=cv0v1,},](v0)(v1)
\Edge[lw=0.1cm,style={color=cv0v4,},](v0)(v4)
\Edge[lw=0.1cm,style={color=cv0v8,},](v0)(v8)
\Edge[lw=0.1cm,style={color=cv0v9,},](v0)(v9)
\Edge[lw=0.1cm,style={color=cv1v3,},](v1)(v3)
\Edge[lw=0.1cm,style={color=cv1v4,},](v1)(v4)
\Edge[lw=0.1cm,style={color=cv1v6,},](v1)(v6)
\Edge[lw=0.1cm,style={color=cv2v3,},](v2)(v3)
\Edge[lw=0.1cm,style={color=cv2v6,},](v2)(v6)
\Edge[lw=0.1cm,style={color=cv2v7,},](v2)(v7)
\Edge[lw=0.1cm,style={color=cv2v9,},](v2)(v9)
\Edge[lw=0.1cm,style={color=cv3v6,},](v3)(v6)
\Edge[lw=0.1cm,style={color=cv4v5,},](v4)(v5)
\Edge[lw=0.1cm,style={color=cv4v7,},](v4)(v7)
\Edge[lw=0.1cm,style={color=cv4v8,},](v4)(v8)
\Edge[lw=0.1cm,style={color=cv5v6,},](v5)(v6)
\Edge[lw=0.1cm,style={color=cv6v7,},](v6)(v7)
\Edge[lw=0.1cm,style={color=cv7v8,},](v7)(v8)
\Edge[lw=0.1cm,style={color=cv7v9,},](v7)(v9)
\Edge[lw=0.1cm,style={color=cv8v9,},](v8)(v9)
\tikzset{EdgeStyle/.append style = {bend left = 70}}
\Edge[lw=0.1cm,style={color=cv1v9,},](v1)(v9)
\end{tikzpicture}   	
\begin{tikzpicture}[scale=0.7]
\definecolor{cv0}{rgb}{0.0,0.0,0.0}
\definecolor{cfv0}{rgb}{1.0,1.0,1.0}
\definecolor{clv0}{rgb}{0.0,0.0,0.0}
\definecolor{cv1}{rgb}{0.0,0.0,0.0}
\definecolor{cfv1}{rgb}{1.0,1.0,1.0}
\definecolor{clv1}{rgb}{0.0,0.0,0.0}
\definecolor{cv2}{rgb}{0.0,0.0,0.0}
\definecolor{cfv2}{rgb}{1.0,1.0,1.0}
\definecolor{clv2}{rgb}{0.0,0.0,0.0}
\definecolor{cv3}{rgb}{0.0,0.0,0.0}
\definecolor{cfv3}{rgb}{1.0,1.0,1.0}
\definecolor{clv3}{rgb}{0.0,0.0,0.0}
\definecolor{cv4}{rgb}{0.0,0.0,0.0}
\definecolor{cfv4}{rgb}{1.0,1.0,1.0}
\definecolor{clv4}{rgb}{0.0,0.0,0.0}
\definecolor{cv5}{rgb}{0.0,0.0,0.0}
\definecolor{cfv5}{rgb}{1.0,1.0,1.0}
\definecolor{clv5}{rgb}{0.0,0.0,0.0}
\definecolor{cv6}{rgb}{0.0,0.0,0.0}
\definecolor{cfv6}{rgb}{1.0,1.0,1.0}
\definecolor{clv6}{rgb}{0.0,0.0,0.0}
\definecolor{cv7}{rgb}{0.0,0.0,0.0}
\definecolor{cfv7}{rgb}{1.0,1.0,1.0}
\definecolor{clv7}{rgb}{0.0,0.0,0.0}
\definecolor{cv8}{rgb}{0.0,0.0,0.0}
\definecolor{cfv8}{rgb}{1.0,1.0,1.0}
\definecolor{clv8}{rgb}{0.0,0.0,0.0}
\definecolor{cv9}{rgb}{0.0,0.0,0.0}
\definecolor{cfv9}{rgb}{1.0,1.0,1.0}
\definecolor{clv9}{rgb}{0.0,0.0,0.0}
\definecolor{cv0v1}{rgb}{0.0,0.0,0.0}
\definecolor{cv0v3}{rgb}{0.0,0.0,0.0}
\definecolor{cv0v4}{rgb}{0.0,0.0,0.0}
\definecolor{cv0v7}{rgb}{0.0,0.0,0.0}
\definecolor{cv0v8}{rgb}{0.0,0.0,0.0}
\definecolor{cv1v2}{rgb}{0.0,0.0,0.0}
\definecolor{cv1v3}{rgb}{0.0,0.0,0.0}
\definecolor{cv1v5}{rgb}{0.0,0.0,0.0}
\definecolor{cv1v9}{rgb}{0.0,0.0,0.0}
\definecolor{cv2v3}{rgb}{0.0,0.0,0.0}
\definecolor{cv2v9}{rgb}{0.0,0.0,0.0}
\definecolor{cv3v5}{rgb}{0.0,0.0,0.0}
\definecolor{cv4v5}{rgb}{0.0,0.0,0.0}
\definecolor{cv4v6}{rgb}{0.0,0.0,0.0}
\definecolor{cv4v7}{rgb}{0.0,0.0,0.0}
\definecolor{cv4v9}{rgb}{0.0,0.0,0.0}
\definecolor{cv5v6}{rgb}{0.0,0.0,0.0}
\definecolor{cv5v7}{rgb}{0.0,0.0,0.0}
\definecolor{cv6v7}{rgb}{0.0,0.0,0.0}
\definecolor{cv6v9}{rgb}{0.0,0.0,0.0}
\definecolor{cv8v9}{rgb}{0.0,0.0,0.0}
\Vertex[style={minimum
size=1.0cm,draw=cv0,fill=cfv0,text=clv0,shape=circle},LabelOut=false,L=\hbox{$10$},x=2cm,y=5cm]{v0}
\Vertex[style={minimum
size=1.0cm,draw=cv1,fill=cfv1,text=clv1,shape=circle},LabelOut=false,L=\hbox{$1$},x=4cm,y=3cm]{v1}
\Vertex[style={minimum
size=1.0cm,draw=cv2,fill=cfv2,text=clv2,shape=circle},LabelOut=false,L=\hbox{$2$},x=3.9cm,y=4.4cm]{v2}
\Vertex[style={minimum
size=1.0cm,draw=cv3,fill=cfv3,text=clv3,shape=circle},LabelOut=false,L=\hbox{$3$},x=5.0cm,y=5cm]{v3}
\Vertex[style={minimum
size=1.0cm,draw=cv4,fill=cfv4,text=clv4,shape=circle},LabelOut=false,L=\hbox{$4$},x=0cm,y=0cm]{v4}
\Vertex[style={minimum
size=1.0cm,draw=cv5,fill=cfv5,text=clv5,shape=circle},LabelOut=false,L=\hbox{$5$},x=5cm,y=1.5cm]{v5}
\Vertex[style={minimum
size=1.0cm,draw=cv6,fill=cfv6,text=clv6,shape=circle},LabelOut=false,L=\hbox{$6$},x=3cm,y=0cm]{v6}
\Vertex[style={minimum
size=1.0cm,draw=cv7,fill=cfv7,text=clv7,shape=circle},LabelOut=false,L=\hbox{$7$},x=3cm,y=1.5cm]{v7}
\Vertex[style={minimum
size=1.0cm,draw=cv8,fill=cfv8,text=clv8,shape=circle},LabelOut=false,L=\hbox{$8$},x=0cm,y=5cm]{v8}
\Vertex[style={minimum
size=1.0cm,draw=cv9,fill=cfv9,text=clv9,shape=circle},LabelOut=false,L=\hbox{$9$},x=0cm,y=0cm]{v9}
\Edge[lw=0.1cm,style={color=cv0v1,},](v0)(v1)
\Edge[lw=0.1cm,style={color=cv0v3,},](v0)(v3)
\Edge[lw=0.1cm,style={color=cv0v4,},](v0)(v4)
\Edge[lw=0.1cm,style={color=cv0v7,},](v0)(v7)
\Edge[lw=0.1cm,style={color=cv0v8,},](v0)(v8)
\Edge[lw=0.1cm,style={color=cv1v2,},](v1)(v2)
\Edge[lw=0.1cm,style={color=cv1v3,},](v1)(v3)
\Edge[lw=0.1cm,style={color=cv1v5,},](v1)(v5)
\Edge[lw=0.1cm,style={color=cv1v9,},](v1)(v9)
\Edge[lw=0.1cm,style={color=cv2v3,},](v2)(v3)
\Edge[lw=0.1cm,style={color=cv2v9,},](v2)(v9)
\Edge[lw=0.1cm,style={color=cv3v5,},](v3)(v5)
\Edge[lw=0.1cm,style={color=cv4v5,},](v4)(v5)
\Edge[lw=0.1cm,style={color=cv4v6,},](v4)(v6)
\Edge[lw=0.1cm,style={color=cv4v7,},](v4)(v7)
\Edge[lw=0.1cm,style={color=cv4v9,},](v4)(v9)
\Edge[lw=0.1cm,style={color=cv5v6,},](v5)(v6)
\Edge[lw=0.1cm,style={color=cv5v7,},](v5)(v7)
\Edge[lw=0.1cm,style={color=cv6v7,},](v6)(v7)
\Edge[lw=0.1cm,style={color=cv6v9,},](v6)(v9)
\Edge[lw=0.1cm,style={color=cv8v9,},](v8)(v9)
\end{tikzpicture}

    \caption{$H_1$ and $H_2$, $\NDL$ cospectral graphs where one is planar and one is not}
    \label{fig:cospecPlanar}
\end{center}
\end{figure}


The {\em Weiner index} of a graph $G$ with vertices $V(G)=\{v_1,\dots,v_n\}$ is\\ $W(G)=\frac{1}{2}\sum_{i=1}^{n}\sum_{j=1}^{n}d(v_i,v_j)$ and the {\em girth} of a graph is the length of the shortest cycle in the graph. The pair of graphs in Figure \ref{fig:cospecGirth} show girth, Wiener index, $k$-regularity, and $k$-transmission regularity are not preserved by $\NDL$-cospectrality. This is in contrast to $\DL$-cospectrality, which preserves the Weiner index (observe $W(G)=\frac{1}{2}\trace{\DL(G)}$ \cite{AH18}). While $F_1$ has girth 5, Weiner index 75, is $3$-regular and is $10$-transmission regular, $F_2$ has girth 3, Weiner index 50, is $8$-regular, and is $15$-transmission regular. Note that while this shows $k$-regularity and $k$-transmission regularity are not preserved by $\NDL$-cospectrality, both graphs are still regular and transmission regular so it does not show regularity or transmission regularity are not preserved. These graphs share the $\NDL$ characteristic polynomial $p_{\NDL}(x)=x^{10} - 10x^9 + \frac{222}{5}x^8 - \frac{2872}{25}x^7 + \frac{23861}{125}x^6 -\frac{660126}{3125}x^5 + \frac{486504}{3125}x^4 - \frac{230256}{3125}x^3 + \frac{63504}{3125}x^2 -
\frac{7776}{3125}x$. 

\begin{figure}[h!]
\begin{center}
       	
\begin{tikzpicture}[scale=0.75]
\definecolor{cv0}{rgb}{0.0,0.0,0.0}
\definecolor{cfv0}{rgb}{1.0,1.0,1.0}
\definecolor{clv0}{rgb}{0.0,0.0,0.0}
\definecolor{cv1}{rgb}{0.0,0.0,0.0}
\definecolor{cfv1}{rgb}{1.0,1.0,1.0}
\definecolor{clv1}{rgb}{0.0,0.0,0.0}
\definecolor{cv2}{rgb}{0.0,0.0,0.0}
\definecolor{cfv2}{rgb}{1.0,1.0,1.0}
\definecolor{clv2}{rgb}{0.0,0.0,0.0}
\definecolor{cv3}{rgb}{0.0,0.0,0.0}
\definecolor{cfv3}{rgb}{1.0,1.0,1.0}
\definecolor{clv3}{rgb}{0.0,0.0,0.0}
\definecolor{cv4}{rgb}{0.0,0.0,0.0}
\definecolor{cfv4}{rgb}{1.0,1.0,1.0}
\definecolor{clv4}{rgb}{0.0,0.0,0.0}
\definecolor{cv5}{rgb}{0.0,0.0,0.0}
\definecolor{cfv5}{rgb}{1.0,1.0,1.0}
\definecolor{clv5}{rgb}{0.0,0.0,0.0}
\definecolor{cv6}{rgb}{0.0,0.0,0.0}
\definecolor{cfv6}{rgb}{1.0,1.0,1.0}
\definecolor{clv6}{rgb}{0.0,0.0,0.0}
\definecolor{cv7}{rgb}{0.0,0.0,0.0}
\definecolor{cfv7}{rgb}{1.0,1.0,1.0}
\definecolor{clv7}{rgb}{0.0,0.0,0.0}
\definecolor{cv8}{rgb}{0.0,0.0,0.0}
\definecolor{cfv8}{rgb}{1.0,1.0,1.0}
\definecolor{clv8}{rgb}{0.0,0.0,0.0}
\definecolor{cv9}{rgb}{0.0,0.0,0.0}
\definecolor{cfv9}{rgb}{1.0,1.0,1.0}
\definecolor{clv9}{rgb}{0.0,0.0,0.0}
\definecolor{cv0v1}{rgb}{0.0,0.0,0.0}
\definecolor{cv0v8}{rgb}{0.0,0.0,0.0}
\definecolor{cv0v9}{rgb}{0.0,0.0,0.0}
\definecolor{cv1v3}{rgb}{0.0,0.0,0.0}
\definecolor{cv1v6}{rgb}{0.0,0.0,0.0}
\definecolor{cv2v3}{rgb}{0.0,0.0,0.0}
\definecolor{cv2v7}{rgb}{0.0,0.0,0.0}
\definecolor{cv2v9}{rgb}{0.0,0.0,0.0}
\definecolor{cv3v4}{rgb}{0.0,0.0,0.0}
\definecolor{cv4v5}{rgb}{0.0,0.0,0.0}
\definecolor{cv4v8}{rgb}{0.0,0.0,0.0}
\definecolor{cv5v6}{rgb}{0.0,0.0,0.0}
\definecolor{cv5v9}{rgb}{0.0,0.0,0.0}
\definecolor{cv6v7}{rgb}{0.0,0.0,0.0}
\definecolor{cv7v8}{rgb}{0.0,0.0,0.0}
\Vertex[style={minimum
size=1.0cm,draw=cv0,fill=cfv0,text=clv0,shape=circle},LabelOut=false,L=\hbox{$10$},x=5cm,y=1cm]{v0}
\Vertex[style={minimum
size=1.0cm,draw=cv1,fill=cfv1,text=clv1,shape=circle},LabelOut=false,L=\hbox{$1$},x=5.0cm,y=5cm]{v1}
\Vertex[style={minimum
size=1.0cm,draw=cv2,fill=cfv2,text=clv2,shape=circle},LabelOut=false,L=\hbox{$2$},x=1cm,y=5cm]{v2}
\Vertex[style={minimum
size=1.0cm,draw=cv3,fill=cfv3,text=clv3,shape=circle},LabelOut=false,L=\hbox{$3$},x=3cm,y=6cm]{v3}
\Vertex[style={minimum
size=1.0cm,draw=cv4,fill=cfv4,text=clv4,shape=circle},LabelOut=false,L=\hbox{$4$},x=3cm,y=2cm]{v4}
\Vertex[style={minimum
size=1.0cm,draw=cv5,fill=cfv5,text=clv5,shape=circle},LabelOut=false,L=\hbox{$5$},x=2.5cm,y=3cm]{v5}
\Vertex[style={minimum
size=1.0cm,draw=cv6,fill=cfv6,text=clv6,shape=circle},LabelOut=false,L=\hbox{$6$},x=0cm,y=3cm]{v6}x=1cm,y=3.2cm
\Vertex[style={minimum
size=1.0cm,draw=cv7,fill=cfv7,text=clv7,shape=circle},LabelOut=false,L=\hbox{$7$},x=1cm,y=1cm]{v7}
\Vertex[style={minimum
size=1.0cm,draw=cv8,fill=cfv8,text=clv8,shape=circle},LabelOut=false,L=\hbox{$8$},x=3cm,y=0.0cm]{v8}
\Vertex[style={minimum
size=1.0cm,draw=cv9,fill=cfv9,text=clv9,shape=circle},LabelOut=false,L=\hbox{$9$},x=6cm,y=3cm]{v9}
\Edge[lw=0.1cm,style={color=cv0v1,},](v0)(v1)
\Edge[lw=0.1cm,style={color=cv0v8,},](v0)(v8)
\Edge[lw=0.1cm,style={color=cv0v9,},](v0)(v9)
\Edge[lw=0.1cm,style={color=cv1v3,},](v1)(v3)
\Edge[lw=0.1cm,style={color=cv1v6,},](v1)(v6)
\Edge[lw=0.1cm,style={color=cv2v3,},](v2)(v3)
\Edge[lw=0.1cm,style={color=cv2v7,},](v2)(v7)
\Edge[lw=0.1cm,style={color=cv2v9,},](v2)(v9)
\Edge[lw=0.1cm,style={color=cv3v4,},](v3)(v4)
\Edge[lw=0.1cm,style={color=cv4v5,},](v4)(v5)
\Edge[lw=0.1cm,style={color=cv4v8,},](v4)(v8)
\Edge[lw=0.1cm,style={color=cv5v6,},](v5)(v6)
\Edge[lw=0.1cm,style={color=cv5v9,},](v5)(v9)
\Edge[lw=0.1cm,style={color=cv6v7,},](v6)(v7)
\Edge[lw=0.1cm,style={color=cv7v8,},](v7)(v8)
\end{tikzpicture}
\begin{tikzpicture}[scale=0.8]
\definecolor{cv0}{rgb}{0.0,0.0,0.0}
\definecolor{cfv0}{rgb}{1.0,1.0,1.0}
\definecolor{clv0}{rgb}{0.0,0.0,0.0}
\definecolor{cv1}{rgb}{0.0,0.0,0.0}
\definecolor{cfv1}{rgb}{1.0,1.0,1.0}
\definecolor{clv1}{rgb}{0.0,0.0,0.0}
\definecolor{cv2}{rgb}{0.0,0.0,0.0}
\definecolor{cfv2}{rgb}{1.0,1.0,1.0}
\definecolor{clv2}{rgb}{0.0,0.0,0.0}
\definecolor{cv3}{rgb}{0.0,0.0,0.0}
\definecolor{cfv3}{rgb}{1.0,1.0,1.0}
\definecolor{clv3}{rgb}{0.0,0.0,0.0}
\definecolor{cv4}{rgb}{0.0,0.0,0.0}
\definecolor{cfv4}{rgb}{1.0,1.0,1.0}
\definecolor{clv4}{rgb}{0.0,0.0,0.0}
\definecolor{cv5}{rgb}{0.0,0.0,0.0}
\definecolor{cfv5}{rgb}{1.0,1.0,1.0}
\definecolor{clv5}{rgb}{0.0,0.0,0.0}
\definecolor{cv6}{rgb}{0.0,0.0,0.0}
\definecolor{cfv6}{rgb}{1.0,1.0,1.0}
\definecolor{clv6}{rgb}{0.0,0.0,0.0}
\definecolor{cv7}{rgb}{0.0,0.0,0.0}
\definecolor{cfv7}{rgb}{1.0,1.0,1.0}
\definecolor{clv7}{rgb}{0.0,0.0,0.0}
\definecolor{cv8}{rgb}{0.0,0.0,0.0}
\definecolor{cfv8}{rgb}{1.0,1.0,1.0}
\definecolor{clv8}{rgb}{0.0,0.0,0.0}
\definecolor{cv9}{rgb}{0.0,0.0,0.0}
\definecolor{cfv9}{rgb}{1.0,1.0,1.0}
\definecolor{clv9}{rgb}{0.0,0.0,0.0}
\definecolor{cv0v1}{rgb}{0.0,0.0,0.0}
\definecolor{cv0v2}{rgb}{0.0,0.0,0.0}
\definecolor{cv0v3}{rgb}{0.0,0.0,0.0}
\definecolor{cv0v4}{rgb}{0.0,0.0,0.0}
\definecolor{cv0v5}{rgb}{0.0,0.0,0.0}
\definecolor{cv0v7}{rgb}{0.0,0.0,0.0}
\definecolor{cv0v8}{rgb}{0.0,0.0,0.0}
\definecolor{cv0v9}{rgb}{0.0,0.0,0.0}
\definecolor{cv1v2}{rgb}{0.0,0.0,0.0}
\definecolor{cv1v3}{rgb}{0.0,0.0,0.0}
\definecolor{cv1v4}{rgb}{0.0,0.0,0.0}
\definecolor{cv1v5}{rgb}{0.0,0.0,0.0}
\definecolor{cv1v6}{rgb}{0.0,0.0,0.0}
\definecolor{cv1v8}{rgb}{0.0,0.0,0.0}
\definecolor{cv1v9}{rgb}{0.0,0.0,0.0}
\definecolor{cv2v3}{rgb}{0.0,0.0,0.0}
\definecolor{cv2v5}{rgb}{0.0,0.0,0.0}
\definecolor{cv2v6}{rgb}{0.0,0.0,0.0}
\definecolor{cv2v7}{rgb}{0.0,0.0,0.0}
\definecolor{cv2v8}{rgb}{0.0,0.0,0.0}
\definecolor{cv2v9}{rgb}{0.0,0.0,0.0}
\definecolor{cv3v4}{rgb}{0.0,0.0,0.0}
\definecolor{cv3v5}{rgb}{0.0,0.0,0.0}
\definecolor{cv3v6}{rgb}{0.0,0.0,0.0}
\definecolor{cv3v7}{rgb}{0.0,0.0,0.0}
\definecolor{cv3v9}{rgb}{0.0,0.0,0.0}
\definecolor{cv4v5}{rgb}{0.0,0.0,0.0}
\definecolor{cv4v6}{rgb}{0.0,0.0,0.0}
\definecolor{cv4v7}{rgb}{0.0,0.0,0.0}
\definecolor{cv4v8}{rgb}{0.0,0.0,0.0}
\definecolor{cv4v9}{rgb}{0.0,0.0,0.0}
\definecolor{cv5v6}{rgb}{0.0,0.0,0.0}
\definecolor{cv5v7}{rgb}{0.0,0.0,0.0}
\definecolor{cv5v8}{rgb}{0.0,0.0,0.0}
\definecolor{cv6v7}{rgb}{0.0,0.0,0.0}
\definecolor{cv6v8}{rgb}{0.0,0.0,0.0}
\definecolor{cv6v9}{rgb}{0.0,0.0,0.0}
\definecolor{cv7v8}{rgb}{0.0,0.0,0.0}
\definecolor{cv7v9}{rgb}{0.0,0.0,0.0}
\definecolor{cv8v9}{rgb}{0.0,0.0,0.0}
\Vertex[style={minimum size=1.0cm,draw=cv0,fill=cfv0,text=clv0,shape=circle},LabelOut=false,L=\hbox{$10$},x=5cm,y=5.0cm]{v0}
\Vertex[style={minimum size=1.0cm,draw=cv1,fill=cfv1,text=clv1,shape=circle},LabelOut=false,L=\hbox{$1$},x=3cm,y=6cm]{v1}
\Vertex[style={minimum size=1.0cm,draw=cv2,fill=cfv2,text=clv2,shape=circle},LabelOut=false,L=\hbox{$2$},x=1cm,y=5cm]{v2}
\Vertex[style={minimum size=1.0cm,draw=cv3,fill=cfv3,text=clv3,shape=circle},LabelOut=false,L=\hbox{$3$},x=0cm,y=3cm]{v3}
\Vertex[style={minimum size=1.0cm,draw=cv4,fill=cfv4,text=clv4,shape=circle},LabelOut=false,L=\hbox{$4$},x=5cm,y=1cm]{v4}
\Vertex[style={minimum size=1.0cm,draw=cv5,fill=cfv5,text=clv5,shape=circle},LabelOut=false,L=\hbox{$5$},x=6cm,y=3cm]{v5}
\Vertex[style={minimum size=1.0cm,draw=cv6,fill=cfv6,text=clv6,shape=circle},LabelOut=false,L=\hbox{$6$},x=1cm,y=1cm]{v6}
\Vertex[style={minimum size=1.0cm,draw=cv7,fill=cfv7,text=clv7,shape=circle},LabelOut=false,L=\hbox{$7$},x=3cm,y=0cm]{v7}
\Vertex[style={minimum size=1.0cm,draw=cv8,fill=cfv8,text=clv8,shape=circle},LabelOut=false,L=\hbox{$8$},x=3.5cm,y=3.5cm]{v8}
\Vertex[style={minimum size=1.0cm,draw=cv9,fill=cfv9,text=clv9,shape=circle},LabelOut=false,L=\hbox{$9$},x=2.5cm,y=3.5cm]{v9}
\Edge[lw=0.1cm,style={color=cv0v1,},](v0)(v1)
\Edge[lw=0.1cm,style={color=cv0v2,},](v0)(v2)
\Edge[lw=0.1cm,style={color=cv0v3,},](v0)(v3)
\Edge[lw=0.1cm,style={color=cv0v4,},](v0)(v4)
\Edge[lw=0.1cm,style={color=cv0v5,},](v0)(v5)
\Edge[lw=0.1cm,style={color=cv0v7,},](v0)(v7)
\Edge[lw=0.1cm,style={color=cv0v8,},](v0)(v8)
\Edge[lw=0.1cm,style={color=cv0v9,},](v0)(v9)
\Edge[lw=0.1cm,style={color=cv1v2,},](v1)(v2)
\Edge[lw=0.1cm,style={color=cv1v3,},](v1)(v3)
\Edge[lw=0.1cm,style={color=cv1v4,},](v1)(v4)
\Edge[lw=0.1cm,style={color=cv1v5,},](v1)(v5)
\Edge[lw=0.1cm,style={color=cv1v6,},](v1)(v6)
\Edge[lw=0.1cm,style={color=cv1v8,},](v1)(v8)
\Edge[lw=0.1cm,style={color=cv1v9,},](v1)(v9)
\Edge[lw=0.1cm,style={color=cv2v3,},](v2)(v3)
\Edge[lw=0.1cm,style={color=cv2v5,},](v2)(v5)
\Edge[lw=0.1cm,style={color=cv2v6,},](v2)(v6)
\Edge[lw=0.1cm,style={color=cv2v7,},](v2)(v7)
\Edge[lw=0.1cm,style={color=cv2v8,},](v2)(v8)
\Edge[lw=0.1cm,style={color=cv2v9,},](v2)(v9)
\Edge[lw=0.1cm,style={color=cv3v4,},](v3)(v4)
\Edge[lw=0.1cm,style={color=cv3v5,},](v3)(v5)
\Edge[lw=0.1cm,style={color=cv3v6,},](v3)(v6)
\Edge[lw=0.1cm,style={color=cv3v7,},](v3)(v7)
\Edge[lw=0.1cm,style={color=cv3v9,},](v3)(v9)
\Edge[lw=0.1cm,style={color=cv4v5,},](v4)(v5)
\Edge[lw=0.1cm,style={color=cv4v6,},](v4)(v6)
\Edge[lw=0.1cm,style={color=cv4v7,},](v4)(v7)
\Edge[lw=0.1cm,style={color=cv4v8,},](v4)(v8)
\Edge[lw=0.1cm,style={color=cv4v9,},](v4)(v9)
\Edge[lw=0.1cm,style={color=cv5v6,},](v5)(v6)
\Edge[lw=0.1cm,style={color=cv5v7,},](v5)(v7)
\Edge[lw=0.1cm,style={color=cv5v8,},](v5)(v8)
\Edge[lw=0.1cm,style={color=cv6v7,},](v6)(v7)
\Edge[lw=0.1cm,style={color=cv6v8,},](v6)(v8)
\Edge[lw=0.1cm,style={color=cv6v9,},](v6)(v9)
\Edge[lw=0.1cm,style={color=cv7v8,},](v7)(v8)
\Edge[lw=0.1cm,style={color=cv7v9,},](v7)(v9)
\Edge[lw=0.1cm,style={color=cv8v9,},](v8)(v9)
\end{tikzpicture}
    \caption{$F_1$ and $F_2$, $\NDL$ cospectral graphs with different girth, Weiner indexes, $k$-regularity, and $k$-transmission regularity and are only $M$-cospectral for $M=\NDL$}
    \label{fig:cospecGirth}
\end{center}
\end{figure}


The {\em diameter} of a graph $G$ is the maximum distance between any pair of vertices in the graph. In \cite{EGM04}, the authors show that for $r$-regular graphs with diameter at most 2, if $\lambda_n\leq \dots\leq \lambda_2\leq \lambda_1=r$, then $\partial_1=2n-2-r$ and $\partial_i=-\lambda_i-2$ for $2\leq i\leq n$. So in the case of graphs with diameter at most two that are regular and transmission regular, the eigenvalues of $A,L,Q,\cL,\D,\DL,$ and $\DQ$ can all be obtained from each other using the above result and Observations \ref{Obs:Reg} and \ref{Obs:TransReg}. Since both of the graphs in Figure \ref{fig:cospecGirth} are regular, transmission regular, and have diameter $2$, we can easily calculate their spectra for $A,L,Q,\cL,\D,\DL,$ and $\DQ$ from their $\NDL$-spectrum. However, since the two graphs have different regularity and different transmission regularity, it is clear that their spectra will be different for every other matrix. Therefore $F_1$ and $F_2$ serve as an example of graphs that are only cospectral with respect to $\NDL$.

We can also find a pair of graphs that are $M$-cospectral for all $M=A,L,Q,\cL,\D,\DL,\DQ$ and $\NDL$. In Figure \ref{fig:cospecALL}, the two graphs are both diameter 2, 5-regular, and 13-transmission regular and they are $\NDL$-cospectral, so they will be $M$-cospectral for all $M=A,L,Q,\cL,\D,\DL$, and $\DQ$ as well. We can also see this by noting that $\phi(\lambda,r,L_1)=\phi(\lambda,r,L_2)$ and $\phi^{\D}(\lambda,r,L_1)=\phi^{\D}(\lambda,r,L_2)$.

\begin{figure}[h!]
\begin{center}
       	
\begin{tikzpicture}[scale=0.75]
\definecolor{cv0}{rgb}{0.0,0.0,0.0}
\definecolor{cfv0}{rgb}{1.0,1.0,1.0}
\definecolor{clv0}{rgb}{0.0,0.0,0.0}
\definecolor{cv1}{rgb}{0.0,0.0,0.0}
\definecolor{cfv1}{rgb}{1.0,1.0,1.0}
\definecolor{clv1}{rgb}{0.0,0.0,0.0}
\definecolor{cv2}{rgb}{0.0,0.0,0.0}
\definecolor{cfv2}{rgb}{1.0,1.0,1.0}
\definecolor{clv2}{rgb}{0.0,0.0,0.0}
\definecolor{cv3}{rgb}{0.0,0.0,0.0}
\definecolor{cfv3}{rgb}{1.0,1.0,1.0}
\definecolor{clv3}{rgb}{0.0,0.0,0.0}
\definecolor{cv4}{rgb}{0.0,0.0,0.0}
\definecolor{cfv4}{rgb}{1.0,1.0,1.0}
\definecolor{clv4}{rgb}{0.0,0.0,0.0}
\definecolor{cv5}{rgb}{0.0,0.0,0.0}
\definecolor{cfv5}{rgb}{1.0,1.0,1.0}
\definecolor{clv5}{rgb}{0.0,0.0,0.0}
\definecolor{cv6}{rgb}{0.0,0.0,0.0}
\definecolor{cfv6}{rgb}{1.0,1.0,1.0}
\definecolor{clv6}{rgb}{0.0,0.0,0.0}
\definecolor{cv7}{rgb}{0.0,0.0,0.0}
\definecolor{cfv7}{rgb}{1.0,1.0,1.0}
\definecolor{clv7}{rgb}{0.0,0.0,0.0}
\definecolor{cv8}{rgb}{0.0,0.0,0.0}
\definecolor{cfv8}{rgb}{1.0,1.0,1.0}
\definecolor{clv8}{rgb}{0.0,0.0,0.0}
\definecolor{cv9}{rgb}{0.0,0.0,0.0}
\definecolor{cfv9}{rgb}{1.0,1.0,1.0}
\definecolor{clv9}{rgb}{0.0,0.0,0.0}
\definecolor{cv0v1}{rgb}{0.0,0.0,0.0}
\definecolor{cv0v2}{rgb}{0.0,0.0,0.0}
\definecolor{cv0v5}{rgb}{0.0,0.0,0.0}
\definecolor{cv0v8}{rgb}{0.0,0.0,0.0}
\definecolor{cv0v9}{rgb}{0.0,0.0,0.0}
\definecolor{cv1v2}{rgb}{0.0,0.0,0.0}
\definecolor{cv1v3}{rgb}{0.0,0.0,0.0}
\definecolor{cv1v5}{rgb}{0.0,0.0,0.0}
\definecolor{cv1v8}{rgb}{0.0,0.0,0.0}
\definecolor{cv2v3}{rgb}{0.0,0.0,0.0}
\definecolor{cv2v7}{rgb}{0.0,0.0,0.0}
\definecolor{cv2v9}{rgb}{0.0,0.0,0.0}
\definecolor{cv3v4}{rgb}{0.0,0.0,0.0}
\definecolor{cv3v6}{rgb}{0.0,0.0,0.0}
\definecolor{cv3v9}{rgb}{0.0,0.0,0.0}
\definecolor{cv4v5}{rgb}{0.0,0.0,0.0}
\definecolor{cv4v7}{rgb}{0.0,0.0,0.0}
\definecolor{cv4v8}{rgb}{0.0,0.0,0.0}
\definecolor{cv4v9}{rgb}{0.0,0.0,0.0}
\definecolor{cv5v6}{rgb}{0.0,0.0,0.0}
\definecolor{cv5v7}{rgb}{0.0,0.0,0.0}
\definecolor{cv6v7}{rgb}{0.0,0.0,0.0}
\definecolor{cv6v8}{rgb}{0.0,0.0,0.0}
\definecolor{cv6v9}{rgb}{0.0,0.0,0.0}
\definecolor{cv7v8}{rgb}{0.0,0.0,0.0}
\Vertex[style={minimum
size=1.0cm,draw=cv0,fill=cfv0,text=clv0,shape=circle},LabelOut=false,L=\hbox{$10$},x=3.5cm,y=5cm]{v0}
\Vertex[style={minimum
size=1.0cm,draw=cv1,fill=cfv1,text=clv1,shape=circle},LabelOut=false,L=\hbox{$1$},x=5cm,y=4cm]{v1}
\Vertex[style={minimum
size=1.0cm,draw=cv2,fill=cfv2,text=clv2,shape=circle},LabelOut=false,L=\hbox{$2$},x=1.5cm,y=5cm]{v2}
\Vertex[style={minimum
size=1.0cm,draw=cv3,fill=cfv3,text=clv3,shape=circle},LabelOut=false,L=\hbox{$3$},x=1cm,y=3.8cm]{v3}
\Vertex[style={minimum
size=1.0cm,draw=cv4,fill=cfv4,text=clv4,shape=circle},LabelOut=false,L=\hbox{$4$},x=1cm,y=2cm]{v4}
\Vertex[style={minimum
size=1.0cm,draw=cv5,fill=cfv5,text=clv5,shape=circle},LabelOut=false,L=\hbox{$5$},x=5cm,y=0cm]{v5}
\Vertex[style={minimum
size=1.0cm,draw=cv6,fill=cfv6,text=clv6,shape=circle},LabelOut=false,L=\hbox{$6$},x=0cm,y=0cm]{v6}
\Vertex[style={minimum
size=1.0cm,draw=cv7,fill=cfv7,text=clv7,shape=circle},LabelOut=false,L=\hbox{$7$},x=3.5cm,y=1.5cm]{v7}
\Vertex[style={minimum
size=1.0cm,draw=cv8,fill=cfv8,text=clv8,shape=circle},LabelOut=false,L=\hbox{$8$},x=3.5cm,y=3cm]{v8}
\Vertex[style={minimum
size=1.0cm,draw=cv9,fill=cfv9,text=clv9,shape=circle},LabelOut=false,L=\hbox{$9$},x=0cm,y=4cm]{v9}
\Edge[lw=0.1cm,style={color=cv0v1,},](v0)(v1)
\Edge[lw=0.1cm,style={color=cv0v2,},](v0)(v2)
\Edge[lw=0.1cm,style={color=cv0v5,},](v0)(v5)
\Edge[lw=0.1cm,style={color=cv0v8,},](v0)(v8)
\Edge[lw=0.1cm,style={color=cv0v9,},](v0)(v9)
\Edge[lw=0.1cm,style={color=cv1v2,},](v1)(v2)
\Edge[lw=0.1cm,style={color=cv1v3,},](v1)(v3)
\Edge[lw=0.1cm,style={color=cv1v5,},](v1)(v5)
\Edge[lw=0.1cm,style={color=cv1v8,},](v1)(v8)
\Edge[lw=0.1cm,style={color=cv2v3,},](v2)(v3)
\Edge[lw=0.1cm,style={color=cv2v7,},](v2)(v7)
\Edge[lw=0.1cm,style={color=cv2v9,},](v2)(v9)
\Edge[lw=0.1cm,style={color=cv3v4,},](v3)(v4)
\Edge[lw=0.1cm,style={color=cv3v6,},](v3)(v6)
\Edge[lw=0.1cm,style={color=cv3v9,},](v3)(v9)
\Edge[lw=0.1cm,style={color=cv4v5,},](v4)(v5)
\Edge[lw=0.1cm,style={color=cv4v7,},](v4)(v7)
\Edge[lw=0.1cm,style={color=cv4v8,},](v4)(v8)
\Edge[lw=0.1cm,style={color=cv4v9,},](v4)(v9)
\Edge[lw=0.1cm,style={color=cv5v6,},](v5)(v6)
\Edge[lw=0.1cm,style={color=cv5v7,},](v5)(v7)
\Edge[lw=0.1cm,style={color=cv6v7,},](v6)(v7)
\Edge[lw=0.1cm,style={color=cv6v8,},](v6)(v8)
\Edge[lw=0.1cm,style={color=cv6v9,},](v6)(v9)
\Edge[lw=0.1cm,style={color=cv7v8,},](v7)(v8)
\end{tikzpicture}
\begin{tikzpicture}[scale=0.8]
\definecolor{cv0}{rgb}{0.0,0.0,0.0}
\definecolor{cfv0}{rgb}{1.0,1.0,1.0}
\definecolor{clv0}{rgb}{0.0,0.0,0.0}
\definecolor{cv1}{rgb}{0.0,0.0,0.0}
\definecolor{cfv1}{rgb}{1.0,1.0,1.0}
\definecolor{clv1}{rgb}{0.0,0.0,0.0}
\definecolor{cv2}{rgb}{0.0,0.0,0.0}
\definecolor{cfv2}{rgb}{1.0,1.0,1.0}
\definecolor{clv2}{rgb}{0.0,0.0,0.0}
\definecolor{cv3}{rgb}{0.0,0.0,0.0}
\definecolor{cfv3}{rgb}{1.0,1.0,1.0}
\definecolor{clv3}{rgb}{0.0,0.0,0.0}
\definecolor{cv4}{rgb}{0.0,0.0,0.0}
\definecolor{cfv4}{rgb}{1.0,1.0,1.0}
\definecolor{clv4}{rgb}{0.0,0.0,0.0}
\definecolor{cv5}{rgb}{0.0,0.0,0.0}
\definecolor{cfv5}{rgb}{1.0,1.0,1.0}
\definecolor{clv5}{rgb}{0.0,0.0,0.0}
\definecolor{cv6}{rgb}{0.0,0.0,0.0}
\definecolor{cfv6}{rgb}{1.0,1.0,1.0}
\definecolor{clv6}{rgb}{0.0,0.0,0.0}
\definecolor{cv7}{rgb}{0.0,0.0,0.0}
\definecolor{cfv7}{rgb}{1.0,1.0,1.0}
\definecolor{clv7}{rgb}{0.0,0.0,0.0}
\definecolor{cv8}{rgb}{0.0,0.0,0.0}
\definecolor{cfv8}{rgb}{1.0,1.0,1.0}
\definecolor{clv8}{rgb}{0.0,0.0,0.0}
\definecolor{cv9}{rgb}{0.0,0.0,0.0}
\definecolor{cfv9}{rgb}{1.0,1.0,1.0}
\definecolor{clv9}{rgb}{0.0,0.0,0.0}
\definecolor{cv0v1}{rgb}{0.0,0.0,0.0}
\definecolor{cv0v3}{rgb}{0.0,0.0,0.0}
\definecolor{cv0v5}{rgb}{0.0,0.0,0.0}
\definecolor{cv0v7}{rgb}{0.0,0.0,0.0}
\definecolor{cv0v8}{rgb}{0.0,0.0,0.0}
\definecolor{cv1v3}{rgb}{0.0,0.0,0.0}
\definecolor{cv1v4}{rgb}{0.0,0.0,0.0}
\definecolor{cv1v6}{rgb}{0.0,0.0,0.0}
\definecolor{cv1v7}{rgb}{0.0,0.0,0.0}
\definecolor{cv2v3}{rgb}{0.0,0.0,0.0}
\definecolor{cv2v4}{rgb}{0.0,0.0,0.0}
\definecolor{cv2v7}{rgb}{0.0,0.0,0.0}
\definecolor{cv2v8}{rgb}{0.0,0.0,0.0}
\definecolor{cv2v9}{rgb}{0.0,0.0,0.0}
\definecolor{cv3v8}{rgb}{0.0,0.0,0.0}
\definecolor{cv3v9}{rgb}{0.0,0.0,0.0}
\definecolor{cv4v5}{rgb}{0.0,0.0,0.0}
\definecolor{cv4v8}{rgb}{0.0,0.0,0.0}
\definecolor{cv4v9}{rgb}{0.0,0.0,0.0}
\definecolor{cv5v6}{rgb}{0.0,0.0,0.0}
\definecolor{cv5v7}{rgb}{0.0,0.0,0.0}
\definecolor{cv5v9}{rgb}{0.0,0.0,0.0}
\definecolor{cv6v7}{rgb}{0.0,0.0,0.0}
\definecolor{cv6v8}{rgb}{0.0,0.0,0.0}
\definecolor{cv6v9}{rgb}{0.0,0.0,0.0}
\Vertex[style={minimum
size=1.0cm,draw=cv0,fill=cfv0,text=clv0,shape=circle},LabelOut=false,L=\hbox{$10$},x=1.5cm,y=5cm]{v0}
\Vertex[style={minimum
size=1.0cm,draw=cv1,fill=cfv1,text=clv1,shape=circle},LabelOut=false,L=\hbox{$1$},x=0cm,y=4cm]{v1}
\Vertex[style={minimum
size=1.0cm,draw=cv2,fill=cfv2,text=clv2,shape=circle},LabelOut=false,L=\hbox{$2$},x=4cm,y=1.3cm]{v2}
\Vertex[style={minimum
size=1.0cm,draw=cv3,fill=cfv3,text=clv3,shape=circle},LabelOut=false,L=\hbox{$3$},x=1.5cm,y=1cm]{v3}
\Vertex[style={minimum
size=1.0cm,draw=cv4,fill=cfv4,text=clv4,shape=circle},LabelOut=false,L=\hbox{$4$},x=0cm,y=0cm]{v4}
\Vertex[style={minimum
size=1.0cm,draw=cv5,fill=cfv5,text=clv5,shape=circle},LabelOut=false,L=\hbox{$5$},x=1cm,y=3cm]{v5}
\Vertex[style={minimum
size=1.0cm,draw=cv6,fill=cfv6,text=clv6,shape=circle},LabelOut=false,L=\hbox{$6$},x=5cm,y=4cm]{v6}
\Vertex[style={minimum
size=1.0cm,draw=cv7,fill=cfv7,text=clv7,shape=circle},LabelOut=false,L=\hbox{$7$},x=3.5cm,y=5cm]{v7}
\Vertex[style={minimum
size=1.0cm,draw=cv8,fill=cfv8,text=clv8,shape=circle},LabelOut=false,L=\hbox{$8$},x=3cm,y=3cm]{v8}
\Vertex[style={minimum
size=1.0cm,draw=cv9,fill=cfv9,text=clv9,shape=circle},LabelOut=false,L=\hbox{$9$},x=5.0cm,y=0cm]{v9}
\Edge[lw=0.1cm,style={color=cv0v1,},](v0)(v1)
\Edge[lw=0.1cm,style={color=cv0v3,},](v0)(v3)
\Edge[lw=0.1cm,style={color=cv0v5,},](v0)(v5)
\Edge[lw=0.1cm,style={color=cv0v7,},](v0)(v7)
\Edge[lw=0.1cm,style={color=cv0v8,},](v0)(v8)
\Edge[lw=0.1cm,style={color=cv1v3,},](v1)(v3)
\Edge[lw=0.1cm,style={color=cv1v4,},](v1)(v4)
\Edge[lw=0.1cm,style={color=cv1v6,},](v1)(v6)
\Edge[lw=0.1cm,style={color=cv1v7,},](v1)(v7)
\Edge[lw=0.1cm,style={color=cv2v3,},](v2)(v3)
\Edge[lw=0.1cm,style={color=cv2v4,},](v2)(v4)
\Edge[lw=0.1cm,style={color=cv2v7,},](v2)(v7)
\Edge[lw=0.1cm,style={color=cv2v8,},](v2)(v8)
\Edge[lw=0.1cm,style={color=cv2v9,},](v2)(v9)
\Edge[lw=0.1cm,style={color=cv3v8,},](v3)(v8)
\Edge[lw=0.1cm,style={color=cv3v9,},](v3)(v9)
\Edge[lw=0.1cm,style={color=cv4v5,},](v4)(v5)
\Edge[lw=0.1cm,style={color=cv4v8,},](v4)(v8)
\Edge[lw=0.1cm,style={color=cv4v9,},](v4)(v9)
\Edge[lw=0.1cm,style={color=cv5v6,},](v5)(v6)
\Edge[lw=0.1cm,style={color=cv5v7,},](v5)(v7)
\Edge[lw=0.1cm,style={color=cv5v9,},](v5)(v9)
\Edge[lw=0.1cm,style={color=cv6v7,},](v6)(v7)
\Edge[lw=0.1cm,style={color=cv6v8,},](v6)(v8)
\Edge[lw=0.1cm,style={color=cv6v9,},](v6)(v9)
\end{tikzpicture}
    \caption{$L_1$ and $L_2$, Graphs that are $M$ cospectral for $M=A,L,Q,\cL,\D,\DL,\DQ$ and $\NDL$}
    \label{fig:cospecALL}
\end{center}
\end{figure}

\subsection{The number of graphs with a cospectral mate}\label{sec:cospecCompare}
The number of graphs that have cospectral mates has been computed for all graphs on 10 and fewer vertices for all matrices discussed in this paper (except for $\cL$, for which the number of graphs with a $\cL$-cospectral mates has only been computed for 9 and fewer vertices). These values are given in Tables \ref{tab:AdjCospecNum} and \ref{tab:DistCospecNum}. In Table \ref{tab:CospecPercent}, the percentage of graphs that have a cospectral mate is given for each matrix. It is obvious that $\NDL$ has significantly fewer graphs with a cospectral mate that any previously studied matrix on 10 or less vertices and we conjecture this pattern continues in to larger number of vertcies. This makes the normalized distance Laplacian a useful tool for determining if two connected graphs are isomorphic.


\begin{table}[h!]
    \centering
\begin{tabular}{c|c|c|c|c|c|c|c|c|c|c}
    $n$ & $\#$ graphs & $A$ & $L$ &$Q$ & $\cL$   \\
    \hline
    3&4&0&0&0&0\\
    4&11&0&0&2&2\\
    5&34&2&0&4&4\\
    6&156&10&4&16&14\\
    7&1,044&110&130&102&52\\
    8&12,346&1,722&1,767&1,201&201\\
    9&274,668&51,038&42,595&19,001&1,092\\
    10&12,005,168&2,560,516&1,412,438&636,607&
\end{tabular}
    \caption{Number of graphs with a cospectral mate with respect to each matrix. Counts for $A,L,Q$ from \cite{HS04}, counts for $\cL$ from \cite{BG11}.}
    \label{tab:AdjCospecNum}
\end{table}

\begin{table}[h!]
    \centering
\begin{tabular}{c|c|c|c|c|c|c|c|c|c|c}
    &$\#$ connected&  && &    \\
    $n$ &graphs& $\D$ & $\DL$&$\DQ$ & $\NDL$   \\
    \hline
    3&2&0&0&0&0\\
    4&6&0&0&0&0\\
    5&21&0&0&2&0\\
    6&112&0&0&6&0\\
    7&853&22&43&38&0\\
    8&11,117&658&745&453&2\\
    9&261,080&25,058&19,778&8,168&8\\
    10&11,716,571&1,389,984&787,851&319,324&7538
\end{tabular}
    \caption{Number of connected graphs with a cospectral pair with respect to each matrix. Counts for $\D,\DL,\DQ$ from \cite{AH18}.}
    \label{tab:DistCospecNum}
\end{table}

\begin{table}[h!]
    \centering
\begin{tabular}{c|c|c|c|c|c|c|c|c|c|c}
    $n$ & $A$ & $L$ &$Q$ & $\cL$& $\D$ & $\DL$&$\DQ$ & $\NDL$   \\
    \hline
    3&0 &0&0&0&0&0&0&0\\
    4&0&0&18.1818\%&18.1818\%&0&0&0&0\\
    5&5.8824\%&0&11.7647\%&11.7647\%&0&0&9.5238\%&0\\
    6&6.4103\%&2.5641\%&10.2564\%&8.9744\%&0&0&5.3571\%&0\\
    7&10.5354\%&12.4521\%&9.7701\%&4.9808\%&2.5791\%&5.0410\%&4.4549\%&0\\
    8&13.9478\%&14.3123\%&9.7278\%&1.6281\%&5.91886\%&6.7014\%&4.0748\%&0.0180\%\\
    9&18.5817\%&15.5078\%&6.9178\%&0.3976\%&9.5978\%&7.5755\%&3.1285\%&0.0031\%\\
    10&21.3284\%&11.7653\%&5.3028\%&&11.8634\%&6.7242\%&2.7254\%&0.0643\%
\end{tabular}
    \caption{Percent of total graphs (for $A,L,Q,\cL$) / connected graphs (for $\D,\DL,\DQ,\NDL$) that have a cospectral mate with respect to each matrix.}
    \label{tab:CospecPercent}
\end{table}

The similar matrix $T^{-1}\DL$ was used for all computations rather than $\NDL$ since it  does not include square roots and runs more quickly on {\em Sage}. 
To find $\NDL$-cospectral graphs graphs on 8 and fewer vertices, it was sufficient to use the {\em Sage} command cospectral.graphs(), which takes as its input any matrix defined with respect to a graph. However, this method was too computationally slow for 9 and 10 vertices. 

The method used for 9 and 10 vertices is a multi-step process that sorts the graphs in to groups of potentially cospectral graphs with an approximation of their characteristic polynomials using double precision decimal arithmetic. Each characteristic polynomial is evaluated at a large number, and then the floor and ceiling of the result is taken modulo a large prime number. The graphs are then sorted in to groups by this value. Using both the floor and ceiling is to ensure cospectral graphs end up in the same group at least once, despite any numerical approximation error. Within these groups, potential cospectral graphs are found by evaluating each approximated characteristic polynomial at a prime number, and searching for pairs of graphs for which this value is within an $\epsilon=0.00005$ tolerance. Then each of these pairs is checked for cospectrality using their exact characteristic polynomial.

\section{Concluding remarks}
In this paper we introduced the normalized distance Laplacian. In Section \ref{sec:NormDistLap} and derive bounds on it's eigenvalues. Most notably, we show $\partial^{\cL}<2$ for all graphs $G$ on $n\geq 3$ vertices, in contrast to normalized Laplacian, which has the property $\mu=2$ if and only if the graph is bipartite. It is natural to ask the following further questions: what is the maximum $\NDL$ spectral radius achieved by a graph on $n$ vertices and which graphs achieve it? Based on the behavior of the family $KPK_{n_1,n_2,n_3}$ for large $n_1,n_2,n_3$, we conjecture that the maximum $\NDL$ spectral radius tends to 2 as $n$ becomes large and that this value is achieved by some graph in the family $KPK_{n_1,n_2,n_3}$. We also found that the complete graph $K_n$ achieves the minimal spectral radius and conjecture that is the only such graph.


It would be interesting to find methods for constructing $\NDL$-cospectral graphs. Since in Section \ref{sec:Cospec} we show examples of $\DL$-cospectral constructions producing $\NDL$-cospectral graphs, it seemes likely that a suitable additional restriction placed on a $\DL$-cospectral construction may provide a $\NDL$-cospectral construction method. 


{ \footnotesize
}
\end{document}